\newcommand{\pd}{\partial}
\def\mbR{\mathbb{R}}
\def\mmm{\mathcal}
\def\eoc{{\rm EOC}}
\newcommand{\uhat}{\hat{u}_{h}}
\newcommand{\uhatx}{\hat{u}_{hx}}
\newcommand{\uhatt}{\hat{u}_{ht}}
\newcommand{\uhattx}{\hat{u}_{htx}}
\newcommand{\what}{\hat{w}_{h}}
\newcommand{\whatx}{\hat{w}_{hx}}
\newcommand{\whatt}{\hat{w}_{ht}}
\newcommand{\Qhatt}{\hat{Q}_{ht}}
\newcommand{\Qhat}{\hat{Q}_{h}}
\def\intertime{\bar{t}}
\def\embconst{\hat{C}(I)}
\def\ubc{u_{b}} 
\def\ubch{I_{h}(u_{b})} 
\def\uic{u_{0}} 
\def\uicx{u_{0x}} 
\def\cic{c_{0}} 
\def\uich{u_{0h}} 
\def\uichx{u_{0hx}} 
\def\cich{c_{0h}} 
\def\wich{w_{0h}}
\def\eps{\varepsilon}
\newtheorem{cor}{Corollary}[section]
\newtheorem{lemma}[cor]{Lemma}
\newtheorem{theorem}[cor]{Theorem}
\newtheorem{remark}[cor]{Remark}
\newtheorem{prob}[cor]{Problem}
\newtheorem{ass}[cor]{Assumption}
\numberwithin{equation}{section}
\begin{document}

\title{Elastic flow interacting with a lateral diffusion process: The one-dimensional graph case}

\author{%
{\sc
Paola Pozzi\thanks{Email: paola.pozzi@uni-due.de}
} \\[2pt]
Universit\"at Duisburg-Essen, Fakult\"at f\"ur Mathematik,\\
Thea-Leymann-Stra{\ss}e 9, 45127 Essen, Germany \\[6pt]
{\sc and}\\[6pt]
{\sc 
Bj\"{o}rn Stinner\thanks{Corresponding author. Email: bjorn.stinner@warwickac.uk}
} \\[2pt]
Mathematics Institute, University of Warwick, \\
Zeeman Building, Coventry CV4 7AL, United Kingdom
}

\maketitle

\begin{abstract}
{
A finite element approach to the elastic flow of a curve coupled with a diffusion equation on the curve is analysed. Considering the graph case, the problem is weakly formulated and approximated with continuous linear finite elements, which is enabled thanks to second-order operator splitting. The error analysis builds up on previous results for the elastic flow. To obtain an error estimate for the quantity on the curve a better control of the velocity is required. For this purpose, a penalty approach is employed and then combined with a generalised Gronwall lemma. Numerical simulations support the theoretical convergence results. Further numerical experiments indicate stability beyond the parameter regime with respect to the penalty term which is covered by the theory. 
}
{

\medskip

\textbf{Keywords:} geometric PDE, surface PDE, operator splitting, finite elements, convergence analysis  

\medskip

\textbf{MSC 2010}:  65M60, 35R01, 65M15
}
\end{abstract}

\section{Introduction}

The objective of this article is the convergence analysis of a semi-discrete finite element approximation to the following problem: 

\begin{prob} \label{prob:cont}
Given a spatial interval $I:=(0,1)$ and a time interval $(0,T)$ with some $T>0$ and some functions $f : \mbR \to \mbR$, $\uic,\cic : \overline{I} \to \mbR$, and $\ubc : \pd I \to \mbR$, find functions $u,c : I \times (0,T) \to \mbR$ such that
\begin{align} \label{eq1}
\frac{u_{t}}{Q} &= -\frac{1}{Q} \left ( \frac{\kappa_{x}}{Q} \right)_{x} - \frac{1}{2} \kappa^3 + f(c), \\
\label{eq2}
\kappa &= \left( \frac{u_{x}}{Q} \right)_{x}, \\ 
\label{eq3}
(c Q)_{t} &= \left ( \frac{c_{x}}{Q} \right)_{x},
\end{align}
where
\[
 Q(x,t) := \sqrt{1+ u_{x}^{2}(x,t)}, \quad (x,t) \in I \times (0,T),
\]
with the boundary and initial conditions
\begin{align} 
\label{bcond}
& u(x,t) = \ubc(x), \quad \kappa(x,t) = 0, \quad c(x,t) = 0, & \qquad & (x,t) \in \partial I \times [0,T], \\
\label{icond}
& c(x,0) = \cic(x), \quad u(x,0) = \uic(x), & \qquad & x \in \overline{I}.
\end{align}
\end{prob}

The equations \eqref{eq1} and \eqref{eq2} are the graph formulation of the elastic flow for the curve $\{ \Gamma(t) \}_{t \in (0,T)}$,  $\Gamma(t) := \{ (x,u(x,t)) \, | \, x \in I \}$, with a forcing term $f(c)$ in the direction normal to the curve. This term depends on a conserved field $c$ on the curve which is subject to the advection-diffusion equation \eqref{eq3}. Such type of problems are motivated by applications in soft matter, see \cite{ES10b,ES13,MerMarRicHar2013}, and cell biology (\cite{ChaGanGra2001,NeiMacWebIns2011,ESV12}). 

Numerical methods for solving forth-order geometric equations such as \eqref{eq1}, \eqref{eq2} may be based on parametric approaches. This work builds up on the graph formulation of the elastic flow (or Willmore flow for higher dimensional manifolds) and on the results which are presented in \cite{DD06,DD06Corr}. More general parametric methods for the above or related problems are presented and analysed in \cite{DziKuwSch2002,DecDzi2009,BarGarNue2007,BarGarNue2012} for curves and \cite{BaePedNoc2004,Dzi2008,Poz2015} for surfaces. Often, operator splitting is employed, thus enabling the use of $H^1$ conforming spaces. But also more direct approaches exist, for instance, using finite volume techniques as in \cite{MikSevBal2010}, employing methods from isogeometric analysis (\cite{BarDedQua2016}), or using $C^1$ conforming finite elements as in \cite{DecSch2010,DecKatSch2015}. Alternatively, methods may also be based on interface capturing approaches. This includes level set representations of the curve or surface (\cite{OshSet1988,DroRum2004}, see \cite{BenMikObeSev2009} for a comparison with parametric methods) and the phase field methodology (\cite{DuLiuWan2004,DuWan2007,BreMasOud2015,FraRumWir2013}. For an overview we refer to \cite{DDE} but we remark that the field has seen significant advances since. 

The two paradigms of surface representation, parametric approaches versus interface capturing approaches, also underpin techniques for solving PDEs on moving surfaces. The overview by \cite{DziEll2013} lists a variety of methods. These include Lagrange methods using finite elements on triangulated surfaces as in \cite{DE07} or generalised spline representations, see \cite{LanMooNeu2016}, diffuse interface approximations (\cite{RaeVoi2007,EllStiStyWel2011}), or Eulerian approaches based on fixed bulk meshes (\cite{XuZha2003,DziEll2010,OlsReu2014,HanLarZah2016,PetRuu2016}). 

For coupled problems such as \eqref{eq1}--\eqref{eq3} we are not aware of any convergence results. Schemes for curve shortening flow instead of the above elastic flow have been analysed in \cite{PS17} (semi-discrete) and \cite{BarDecSty2017} (fully discrete). The related work of \cite{KovLiLubGue2017} covers the case of a (weighted) $H^1$ flow instead an $L^2$ flow of the surface energy. The benefit then is some additional control of the manifold velocity which allows to show convergence of an isoparametric finite element scheme even in the case of surfaces. 

Our numerical approach to Problem \ref{prob:cont} is based on the method in \cite{DD06,DD06Corr} for the elastic flow of the curve in the graph case. Operator splitting and piecewise linear $H^1$-conforming finite elements are used and, in particular, error estimates for the velocity $u_{t}$, the spatial gradient $u_{x}$, and the length element $Q$ are proved. However, the diffusion equation involves $Q_{t}$, whence some control of $u_{xt}$ is required. Denoting by $h$ the spatial discretisation parameter, the idea is to add a suitably $h$-weighted $H^1$ inner product of the velocity with the test function to the semi-discrete weak problem, see \eqref{def:mu} and \eqref{(2.10)} below. In principle, this idea already features in the scheme in \cite[equation (3.12)]{PS17} where, thanks to mass lumping, such a term with a weighting scaling with $h^2$ is added. For that problem the structure of the geometric equation could be further exploited in order to derive suitable error estimates for $c$. In the present case we use a generalised Gronwall inequality (see Lemma~\ref{genG} below) instead. For this to work we need to assume strictly smaller than quadratic growth in $h$. As a result, we can only prove smaller convergence rates for the geometric fields than in \cite{DD06}. The slower convergence is also observed in numerical simulations. However, the scheme turns out to be quite stable even for faster growth of the penalty term in $h$. In particular, if it grows quadratically in $h$ then we essentially recover the rates in \cite{DD06} (where there is no coupling, i.e., $f=0$).

In Section 2 we state Problem \ref{prob:cont} in a suitable variational form and some assumptions on the continuous solution. The spatial discretisation is presented in Section 3 where we also prove some properties of the semi-discrete scheme and state the main convergence result (Theorem \ref{mainthm} on page \pageref{mainthm}). This result then is proved by a series of Lemmas in Section 4. In Section 5 we present some numerical simulation results and Section 6 contains some concluding remarks.

\section{Variational formulation and assumptions}

Instead of working with the scalar curvature $\kappa$, we introduce the variable 
$$w:=-\kappa Q=-\kappa\sqrt{1+ u_{x}(x,t)^{2}}=-\frac{u_{xx}}{(1+ u_{x}(x,t)^{2})}.$$ 
A simple computation gives
\[
 -\frac{1}{Q} \left( \frac{\kappa_{x}}{Q} \right)_{x} - \frac{1}{2} \kappa^{3} = \left( \frac{1}{Q^{3}} w_{x} \right)_{x} + \frac{1}{2}
\left( \frac{w^{2}}{Q^{3}} u_{x} \right)_{x}.
\]
We thus consider the following weak formulation of the system \eqref{eq1}--\eqref{eq3}: 
\begin{align}
\label{(2.4)}
\int_I \frac{u_{t}}{Q}\varphi \, dx 
+ \int_I \frac{1}{2} w^{2}\frac{u_{x} \varphi_{x}}{Q^{3}} + \frac{w_{x} \varphi_{x}}{Q^{3}}\, dx 
- \int_{I} f(c) \varphi \, dx
\, &= 0 & \quad & \forall \varphi \in H^{1}_{0}(I), \\
\label{(2.5)}
\int_I \frac{w}{Q} \psi\, dx - \int_I \frac{u_{x} }{Q}\psi_{x}\, dx\, &= 0 & \quad & \forall \psi \in H^{1}_{0}(I), \\
\label{c-gleichung}
\frac{d}{dt } \left( \int_{I}c Q \xi \, dx \right)+ \int_{I}\frac{c_{x}}{Q} \xi_{x} \, dx \, &= 0 & \quad & \forall \xi \in H^{1}_{0}(I). 
\end{align}
Note that if we consider a time dependent test function $\xi$ then the last equation is replaced by
\begin{equation} \label{c-gleichung-long}
 \frac{d}{dt } \left( \int_{I}c Q \xi \, dx \right) + \int_{I}\frac{c_{x}}{Q} \xi_{x} \, dx = \int_{I} cQ \xi_{t}. 
\end{equation} 
If $f = 0$ then the system \eqref{(2.4)}, \eqref{(2.5)} coincides with \cite[(2.4), (2.5)]{DD06}.

\begin{ass} \label{ass:sol}
We assume that $f:\mbR \to \mbR$ is a given continuously differentiable map with 
\begin{align}\label{fbound}
\|f\|_{L^{\infty}(\mbR)} \leq C, \qquad \|f'\|_{L^{\infty}(\mbR)} \leq C.
\end{align}
Moreover, we assume that the initial-boundary value problem \eqref{eq1}--\eqref{icond} has a unique solution $(u,c)$ which satisfies
\begin{align}\label{(1.13)}
& u \in L^{\infty}((0,T); W^{4, \infty}(I)) \cap L^{2}((0,T); H^{5}(I)),\\
\label{(1.14)}
& u_{t} \in L^{\infty}((0,T); W^{2, \infty}(I)) \cap L^{2}((0,T); H^{3}(I)),\\
\label{(1.15)}
& u_{tt} \in L^{\infty}((0,T); L^{ \infty}(I)) \cap L^{2}((0,T); H^{1}(I)),\\
\label{cond-c}
& c \in W^{1,\infty}((0,T); H^{1}(I)) \cap L^{\infty}((0,T); H^{2}(I)) \cap L^{\infty}((0,T); H^{1}_{0}(I)) .
\end{align}
\end{ass}

\section{Discretisation and convergence statements}

We consider continuous, piecewise linear finite elements on a subdivision $0 = x_0 < x_1 < \dots <x_N = 1$ of the spatial interval:
\begin{equation*}
X_{h0} := \{ u_{h} \in C^0([0,1], \mbR) \, : \, u_{h}|_{[x_{j-1},x_j]} \in P_1 ([x_{j-1},x_j]),\, j=1 \cdots, N, \, u_{h}(x_0) = u_{h}(x_N) = 0 \}.
\end{equation*}
Let $\varphi_j$, $j=0, \ldots, N$, denote the nodal basis functions. We set $X_{h}:= span \{ \varphi_{0}, \ldots, \varphi_{N} \}$ and denote by $S_{j}$ the subinterval $S_{j}=[x_{j-1}, x_{j}] \subset [0,1] $. Moreover let $h_{j}=|S_j|$ and $h=\max_{j=1,\ldots, N}h_j$ be the maximal diameter of a grid element.
We assume that for some constant $\bar{C}>0$ we have 
\begin{align}
\label{(4.1)}
h_j \geq \bar{C}h \quad \mbox{for all } j=1, \dots, N.
\end{align}
For a continuous function $u \in C^0 ([0,1], \mbR)$ let $I_{h} u \in X_{h}$ be the linear interpolate uniquely defined by $I_{h} u(x_i)= u(x_i)$ for all $i=0, \ldots,N$. We shall use the standard interpolation estimates:
\begin{align}
\label{(4.2)}
\| v-I_{h} v \|_{L^{2}(I)} &\leq C h^k \|v \|_{H^k(I)} &\text{ for $k=1,2$}\,,\\
\label{(4.2)bis}
\| (v-I_{h} v)_{x} \|_{L^{2} (I)} & \leq C h \| v \|_{H^{2} (I)}\,.
\end{align} 
Recall also the inverse estimates for any $m_{h} \in X_{h}$ and $j=1, \ldots, N$:
\begin{align}
\| m_{hx} \|_{L^{2}(S_{j})} & \leq \frac{C}{h_{j}} \| m_{h} \|_{L^{2} (S_{j})} & \quad \overset{\eqref{(4.1)}}{\Longrightarrow} \quad \|m_{hx} \|_{L^{2}(I)} & \leq \frac{C}{h} \| m_{h} \|_{L^{2} (I)}, \label{IE-1} \\
\| m_{h} \|_{L^{\infty} (S_{j})} & \leq \frac{C}{\sqrt{h_{j}}} \| m_{h} \|_{L^{2}(S_{j}) } & \quad \overset{\eqref{(4.1)}}{\Longrightarrow} \quad \| m_{h} \|_{L^{\infty} (I)} & \leq \frac{C}{\sqrt{h}} \| m_{h} \|_{L^{2}(I)} \label{IE-2}.
\end{align}

The discrete formulation that we propose entails a regularization term weigthed by a positive function depending on the parameter $h$, which is defined by
\begin{equation} \label{def:mu}
 \mu(h) := C_{\mu} h^r \quad \mbox{for some } r \in [1,2) \mbox{ and some } C_{\mu} > 0.
\end{equation}
The reason for introducing this term is motivated below in Remark~\ref{regularization} after introducing the necessary notation. The initial data for the discrete problem are denoted by 
\begin{equation} \label{disicond}
 \uich \in \ubch + X_{h0}, \quad \cich \in X_{h0},
\end{equation}
respectively, and will be specified in \eqref{icondh} below (see also  Lemma~\ref{indata}).

\begin{prob}[Semi-discrete Scheme] \label{prob:semidis}
Find functions $u_{h}(\cdot,t) \in \ubch + X_{h0}$ and $w_{h}(\cdot, t), c_{h}(\cdot,t) \in X_{h0}$, $t\in [0,T]$, of the form
\begin{equation*}
 u_{h}(x,t)= \sum_{j=0}^{N} u_j(t) \varphi_j(x), \quad c_{h}(x,t) = \sum_{j=1}^{N-1} c_j(t) \varphi_j(x), \quad w_{h}(x,t) = \sum_{j=1}^{N-1} w_j(t) \varphi_j(x),
\end{equation*} 
with $u_j(t), c_{j}(t), w_{j}(t) \in \mbR$, $t \in [0,T]$, such that $u_{h}(\cdot,0) = \uich$, $c_{h}(\cdot,0) = \cich$ as defined in \eqref{icondh}, and such that for all $\varphi_{h}, \psi_{h}, \zeta_{h} \in X_{h0}$ 
\begin{align}
 \int_I \mu(h) u_{hxt} \varphi_{hx} + \frac{u_{ht} \varphi_{h}}{Q_{h}} \, dx + \int_I \frac{1}{2} w_{h}^{2} \frac{u_{hx} \varphi_{hx}}{Q_{h}^3} + \frac{w_{hx}\varphi_{hx}}{Q_{h}^3} \, dx &= \int_{I} I_{h} (f(c_{h})) \varphi_{h} \, dx, \label{(2.10)} \\
 \int_I \frac{w_{h} \psi_{h}}{Q_{h}} \, dx - \int_I \frac{u_{hx} \psi_{hx}}{Q_{h}} \, dx \, &= 0, \label{(2.11)} \\
 \frac{d}{dt} \left ( \int_{I} c_{h} Q_{h} \zeta_{h} \, dx \right ) + \int_{I} \frac{c_{hx} \zeta_{hx}}{Q_{h}} \, dx \, &=0. \label{ch-gleichung}
\end{align}
Here, $\mu(h)$ is defined in \eqref{def:mu} and $Q_{h}$ denotes the discrete length element,
\[
 Q_{h}(x,t) := \sqrt{1 + u_{hx}^{2}(x,t)}.
\]
\end{prob}
Note that if we consider a time dependent test function $\zeta_{h}(x,t) = \sum_{j=1}^{N-1} \zeta_j(t) \varphi_j(x)$ in \eqref{ch-gleichung} then the last equation is replaced by
\begin{align} \label{ch-gleichung-long}
 \frac{d}{dt} \left ( \int_{I} c_{h} Q_{h} \zeta_{h} \, dx \right ) + \int_{I} \frac{c_{hx} \zeta_{hx}}{Q_{h}} \, dx \, =\int_{I} c_{h} \zeta_{ht} Q_{h} dx.
\end{align}

\begin{lemma}
The above system \eqref{(2.10)}--\eqref{ch-gleichung} has a unique solution on $[0,\tilde{T}]$ for any \mbox{$0<\tilde{T}<\infty$}.
\end{lemma}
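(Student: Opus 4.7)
The plan is to recast \eqref{(2.10)}--\eqref{ch-gleichung} as a finite-dimensional system of ODEs for the interior nodal coefficient vectors $U = (u_j)_{j=1}^{N-1}$, $W = (w_j)_{j=1}^{N-1}$ and $C = (c_j)_{j=1}^{N-1}$ (recall that $u_0, u_N$ are prescribed by $\ubch$), and then combine Picard--Lindel\"of with an \emph{a priori} bound to obtain global-in-time existence.

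First, \eqref{(2.11)} viewed as a linear system for $W$ with $U$ fixed takes the form $\mathcal{A}(U) W = \mathcal{R}(U)$ where $\mathcal{A}(U)_{ij} = \int_I \varphi_i \varphi_j/Q_h\, dx$. Since $Q_h \ge 1$, $\mathcal{A}(U)$ is uniformly positive definite, so $W = W(U)$ is $C^\infty$ in $U$. Substituting into \eqref{(2.10)} yields $\mathcal{B}(U) \dot U = \mathcal{G}(U, C)$ with
\[
\mathcal{B}(U)_{ij} := \int_I \bigl(\mu(h) \varphi_{ix} \varphi_{jx} + \varphi_i \varphi_j / Q_h\bigr)\, dx, \qquad i,j = 1,\ldots,N-1,
\]
which is positive definite thanks to $\mu(h)>0$, so $\dot U = \mathcal{B}(U)^{-1}\mathcal{G}(U,C)$. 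Finally, expanding $(c_h Q_h)_t = \dot c_h Q_h + c_h u_{hx} u_{hxt}/Q_h$ in \eqref{ch-gleichung} and inserting the expression for $\dot U$ just obtained gives $\mathcal{D}(U) \dot C = \mathcal{H}(U,C)$ with the uniformly positive definite mass matrix $\mathcal{D}(U)_{ij} := \int_I Q_h \varphi_i \varphi_j\, dx$; hence $\dot C = \mathcal{D}(U)^{-1}\mathcal{H}(U,C)$. The combined system $\dot Y = \Phi(Y)$ with $Y = (U,C) \in \mbR^{2(N-1)}$ therefore has a $C^1$ right-hand side on all of $\mbR^{2(N-1)}$ (the assumption $f \in C^1(\mbR)$ from \eqref{fbound} enters here), so Picard--Lindel\"of produces a unique maximal solution on some $[0,T^*)$ with initial data from \eqref{disicond}.

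To extend to $[0,\tilde T]$ I must rule out finite-time blow-up. Testing \eqref{(2.10)} with $\varphi_h = u_{ht}$ and using time-differentiated \eqref{(2.11)} (tested with $w_h$) to rewrite the elastic energy contributions produces a discrete Willmore-type identity of the form
\[
\mu(h) \int_I u_{hxt}^2 \, dx + \int_I \frac{u_{ht}^2}{Q_h}\, dx + \frac{d}{dt}\Bigl(\tfrac12 \int_I \tfrac{w_h^2}{Q_h}\, dx\Bigr) = \int_I I_h(f(c_h))\, u_{ht}\, dx,
\]
whose right-hand side is bounded using $\|f\|_{L^\infty(\mbR)} \le C$ and Young's inequality independently of $C$. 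Combined with \eqref{(2.11)} to recover $\|u_{hx}\|_{L^2(I)}$ from $\|w_h\|_{L^2(I)}$ and the inverse estimates \eqref{IE-1}--\eqref{IE-2}, this gives a (possibly $h$- and $t$-dependent) finite bound on $|U|$ on every bounded time interval. With $|U|$ bounded, \eqref{ch-gleichung} is a \emph{linear} ODE system in $C$ with smooth bounded coefficients, so $|C|$ cannot blow up either. Hence $T^* = +\infty$ and the solution exists on $[0,\tilde T]$ for every $\tilde T > 0$.

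The main technical point I expect to be delicate is the energy estimate keeping $|U|$ finite globally in time --- the crucial observation being that the coupling term $f(c_h)$ is bounded uniformly in $C$ thanks to \eqref{fbound}, so the estimate for $U$ decouples from $C$. All the matrix inversions, by contrast, are immediate from $Q_h \ge 1$ and $\mu(h) > 0$ and hold without any condition on the state.
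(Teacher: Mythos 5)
Your reduction of \eqref{(2.10)}--\eqref{ch-gleichung} to an ODE system (solving \eqref{(2.11)} for $W=W(U)$, inverting the positive definite matrices) and the resulting local existence and uniqueness are fine, and your energy identity --- testing \eqref{(2.10)} with $u_{ht}$ and the time-differentiated \eqref{(2.11)} with $w_h$ --- is exactly the identity the paper uses. The gap lies in how you pass from this identity to a bound on $|U|$. First, the right-hand side $\int_I I_h(f(c_h))u_{ht}\,dx$ is indeed bounded independently of $c_h$ by \eqref{fbound}, but after Young's inequality against $\int_I u_{ht}^2/Q_h\,dx$ you are left with a term of the form $C\int_I Q_h\,dx$, which is \emph{not} a priori controlled; it is precisely here that the paper inserts the additional estimate $\frac{d}{dt}\int_I Q_h\,dx=\int_I w_h u_{ht}/Q_h\,dx\le \epsilon\int_I u_{ht}^2/Q_h\,dx+C_\epsilon\int_I w_h^2/Q_h\,dx$ (obtained from \eqref{(2.11)} with $\psi_h=u_{ht}$) and closes the combined inequality with a Gronwall argument. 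You omit this step, so your energy estimate does not close as stated.

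Second, and more seriously, the claim that \eqref{(2.11)} together with the inverse estimates \eqref{IE-1}--\eqref{IE-2} lets you ``recover $\|u_{hx}\|_{L^2(I)}$ from $\|w_h\|_{L^2(I)}$'' is not justified. The energy identity only controls $\int_I w_h^2/Q_h\,dx$, and converting this into $\|w_h\|_{L^2(I)}$ already requires the very bound on $Q_h$ you are trying to prove; moreover \eqref{(2.11)} only yields information on the piecewise constant quantity $u_{hx}/Q_h$, which is bounded by one no matter how large $u_{hx}$ is, and testing with $u_h$ minus a boundary lift gives $\int_I u_{hx}^2/Q_h\,dx\le C\|w_h\|_{L^2(I)}\|u_h\|_{L^2(I)}+C$, which does not close without smallness of $\|w_h\|$. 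The paper avoids this inversion altogether: the Gronwall bound gives $\|u_{hx}\|_{L^1(I)}\le\int_I Q_h\,dx\le C$, and then equivalence of norms on the finite-dimensional space ($h$-dependent constants being admissible here) yields the pointwise bounds on $Q_h$, $u_h$, $w_h$. (Alternatively, since you allow $h$-dependent constants, you could bound the right-hand side by $C\|u_{ht}\|_{L^2(I)}$, absorb it via Poincar\'e into $\mu(h)\|u_{hxt}\|_{L^2(I)}^2$, and integrate $u_{ht}$ in time to bound $u_h$ --- but neither route appears in your argument.) Once $|U|$ and hence $\dot U$ are bounded, your treatment of the $c$-equation is essentially correct, although strictly speaking it is not a linear system in $C$ (since $\dot U$, and hence $Q_{ht}$, depends on $C$ through $f(c_h)$); what matters is that the right-hand side grows at most linearly in $|C|$ because $Q_{ht}$ is bounded independently of $C$, which is also how the paper concludes via a Gronwall estimate for $\|c_h\|_{L^2(I)}$.
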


\begin{proof} Fix $h>0$. Local existence on some time interval $[0,T_{h})$ follows from standard ODEs theory. 
Since $u_{h}(t), w_{h}(t), c_{h}(t)$ have values in a finite dimensional space (whose dimension depends on $h$), it is sufficient to bound $(u_{h},w_{h},c_{h})$ in some norm to obtain existence on $[0,\tilde{T}]$. 
Choosing $\varphi_{h}= u_{ht}$ in \eqref{(2.10)}, $\psi_{h}=w_{h}$ in $(\ref{(2.11)})_{t}$ (i.e. in equation \eqref{(2.11)} after differentiation with respect to time), and combining the thus obtained equations gives
\begin{align*}
\int_{I} \mu(h) u_{hxt}^{2} dx + \frac{u_{ht}^{2}}{Q_{h}} dx + \frac{1}{2} \frac{d}{dt} \int_{I} \frac{w_{h}^{2}}{Q_{h}} dx = \int_{I} I_{h}(f(c_{h})) u_{ht} dx \leq C \int_{I} Q_{h} dx + \frac{1}{2}\int_{I} \frac{u_{ht}^{2}}{Q_{h}},
\end{align*}
where for the last inequality we have used the boundedness of $f$ (recall \eqref{fbound}). 
Integration in time gives for any $t' \in [0,T_{h})$
\begin{align*}
 \mu(h) \int_{0}^{t'}\int_{I} u_{hxt}^{2} dx dt+ \frac{1}{2}\int_{0}^{t'}\int_{I} \frac{u_{ht}^{2}}{Q_{h}} dx dt + \frac{1}{2} \int_{I} \frac{w_{h}^{2}}{Q_{h}} dx \leq C(\uich, \wich) + C\int_{0}^{t'}\int_{I}Q_{h} dx dt.
\end{align*}
On the other hand, using \eqref{(2.11)} we observe that 
\begin{align*}
\frac{d}{dt} \int_{I} Q_{h } dx &=\int_{I} w_{h} \frac{u_{ht}}{Q_{h}} \leq \epsilon \int_{I} \frac{u_{ht}^{2}}{Q_{h}} dx + C_{\epsilon} \int_{I} \frac{w_{h}^{2}}{Q_{h}} dx 
\end{align*}
so that integration in time gives
\begin{align*}
\int_{I} Q_{h}(t') dx \leq C(\uich) + \epsilon \int_{0}^{t'}\int_{I} \frac{u_{ht}^{2}}{Q_{h}} dx dt + C_{\epsilon} \int_{0}^{t'}\int_{I} \frac{w_{h}^{2}}{Q_{h}} dx dt \qquad 0\leq t'< T_{h}.
\end{align*}
Combining the above inequalities we obtain
\begin{multline*}
 \mu(h) \int_{0}^{t'}\int_{I} u_{hxt}^{2} dx dt+ \frac{1}{2}\int_{0}^{t'}\int_{I} \frac{u_{ht}^{2}}{Q_{h}} dx dt + \frac{1}{2} \int_{I} \frac{w_{h}^{2}}{Q_{h}} dx + \int_{I} Q_h(t') dx \\
 \leq C + C \epsilon \int_{0}^{t'}\int_{I} \frac{u_{ht}^{2}}{Q_{h}} dx dt + C_{\epsilon} \int_{0}^{t'}\int_{I} \frac{w_{h}^{2}}{Q_{h}} dx dt \qquad 0\leq t'< T_{h},
\end{multline*}
for some constant $C=C(\uich,\wich,\tilde{T})$. Choosing $\epsilon$ appropriately and using a Gronwall argument we infer that
\begin{align*}
 \mu(h) \int_{0}^{t'}\int_{I} u_{hxt}^{2} dx dt +\int_{0}^{t'}\int_{I} \frac{u_{ht}^{2}}{Q_{h}} dx dt + \int_{I} \frac{w_{h}^{2}}{Q_{h}}(t') dx + \int_{I} Q_{h}(t') dx \leq C, \qquad 0\leq t'< T_{h}.
\end{align*}
Since all norms are equivalent in a finite dimensional space, this implies that $Q_{h}(t') \leq C(\uich,\wich,\tilde{T},h) $ uniformly in $[0,1] \times [0, T_{h})$. Uniform bounds for $u_{h}$, $w_{h}$ follow immediately.

If we write down explicitly the ODE system for $\dot{u}_{j}$ then we see that
\begin{align*}
\sum_{j=1}^{N-1} \left(\mu(h) \int_{I} \varphi_{ix} \varphi_{jx} dx+ \int_{I} \frac{\varphi_{i} \varphi_{j}}{Q_{h}} dx \right) \dot{u}_{j} =F_{i}(u_{h},w_{h}, c_{h}) \qquad (i=1, 
\ldots N-1)
\end{align*}
with $|F_{i} |\leq C$ uniformly in time, since $f$ is bounded and since we have uniform bounds on $w_{h}$ and $u_{h}$. The $(N-1) \times (N-1)$ matrix $A $ with real entries $A_{ij}(h, Q_{h}(t))= \int_{I}\mu(h) \varphi_{ix} \varphi_{jx} + \frac{\varphi_{i} \varphi_{j}}{Q_{h}} dx$ is symmetric, tri-diagonal, diagonalizable and positive definite. Its positive eigenvalues depend on $h$ but are uniformly bounded from below with repect to time (since $Q_{h}$ is uniformly bounded from above and below).
For simplicity we show this fact in the special case of a uniform grid and taking $\mu(h)=h$ (the general case is treated in a similar way): for the entries of the matrix $A$ a simple computation gives (using that $1 \leq Q_{h}(t') \leq C$)
\begin{align*}
A_{ii}&=\mu(h)\frac{2}{h}+\int_{I}\frac{\varphi_{i}^{2}}{Q_{h}(t')} dx \in \left [2+\frac{1}{C}\frac{4h}{6}, 2 + \frac{4h}{6} \right ],\\
A_{i i\pm 1} & =-\mu(h)\frac{1}{h}+ \int_{I} \frac{\varphi_{i} \varphi_{i\pm 1}}{Q_{h}(t)} dx \in \left[-1 +\frac{1}{C} \frac{h}{6}, -1 +\frac{h}{6}\right].
\end{align*}
It is well known (Gerschgorin theorem) that the eigenvalues $\lambda(t)$ of $A=A(t')$ are elements of the set
$$ \{ z \in \mbR \, : |z- A_{ii}| \leq |A_{i,i+1}| + |A_{i,i-1}| \}$$
giving that
$$ 5\geq \lambda(t) \geq \frac{h}{C}, \qquad \text{ for } 0\leq t'< T_{h}.$$
In conclusion we are able to infer a uniform bound on the $\dot{u}_{j}$, $j=1, \ldots, N-1$ and hence on $u_{hxt}$ (taking into account that $\dot{u}_{0}=\dot{u}_{N}=0$ due to the boundary conditions).

Next, testing \eqref{ch-gleichung-long} with $\zeta_{h}=c_{h}$ and using the bounds on $u_{h}, u_{ht}$ we infer
\begin{align*}
\frac{d}{dt} &\left ( \int_{I} c_{h}^{2} Q_{h} \, dx \right ) + \int_{I} \frac{c_{hx}^{2}}{Q_{h}} \, dx \, =\int_{I} c_{h} c_{ht} Q_{h} dx=\frac{1}{2}\frac{d}{dt} \left ( \int_{I} c_{h}^{2} Q_{h} \, dx \right ) - \frac{1}{2}\int_{I} c_{h}^{2} \frac{u_{hx}}{Q_{h}} u_{hxt} dx \\
&\leq \frac{1}{2}\frac{d}{dt} \left ( \int_{I} c_{h}^{2} Q_{h} \, dx \right ) + C \int_{I} c_{h}^{2} dx
\leq \frac{1}{2}\frac{d}{dt} \left ( \int_{I} c_{h}^{2} Q_{h} \, dx \right ) + C \int_{I} c_{h}^{2} Q_{h}dx.
\end{align*}
With a Gronwall estimate we get $\|c_{h}(t') \|_{L^{2}(I)} \leq C=C(\uich,\wich,\tilde{T},\cich,h)$ uniformly in $0 \leq t' < T_{h}$. The flow can be now extended up to time $\tilde{T}$. Since $h$ was chosen arbitrarily the claim follows.
\end{proof}

We now state our main result which will be proved in the subsequent section by a series of lemmas:
\begin{theorem}\label{mainthm}
Let $f: \mbR \to \mbR$ satisfy \eqref{fbound}. Assume that \eqref{eq1}--\eqref{icond} has a unique solution $(u,c)$ on the interval $[0,T]$, which satisfies \eqref{(1.13)}--\eqref{cond-c}. Let $(u_{h},c_{h})$ denote the solution of Problem~\ref{prob:semidis}.
Then there is some $h_{0} > 0$ such that for all $h \leq h_{0}$ 
\begin{align*}
 \sup_{0\leq t \leq T} \| (u -u_{h})(t) \|_{L^{2}(I)} + \sup_{0\leq t \leq T} \| (w -w_{h})(t) \|_{L^{2}(I)} & \nonumber \\ 
 + \sup_{0\leq t \leq T} \| (u -u_{h})_{x}(t) \|_{L^{2}(I)} & \leq Ch,\\
 \int_{0}^{T} \| (u -u_{h})_{t}(t) \|_{L^{2}(I)}^{2} dt + 
 \int_{0}^{T} \| (w -w_{h})_{x}(t) \|_{L^{2}(I)}^{2} dt & \leq Ch^{2},\\
 \sup_{0\leq t \leq T} \| (c -c_{h})(t) \|_{L^{2}(I)}^{2} + \int_{0}^{T} \| (c -c_{h})_{x}(t) \|_{L^{2}(I)}^{2} dt & \leq Ch^{2}.
\end{align*}
Moreover, we have that
\begin{align*}
\int_{0}^{T} \| (u -u_{h})_{tx}(t) \|_{L^{2}(I)}^{2} dt \leq C \frac{h^{2}}{\mu(h)} = C h^{2-r}.
\end{align*}
with $\mu(h)$ defined in \eqref{def:mu}.
\end{theorem}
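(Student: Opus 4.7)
The plan is to combine the finite element error analysis for the pure elastic flow from \cite{DD06,DD06Corr} with an energy estimate for the surface diffusion equation \eqref{ch-gleichung-long}, the two tied together by a generalised Gronwall argument that exploits the penalty term $\mu(h)\int_I u_{hxt}\varphi_{hx}$. I decompose $u-u_h = (u-I_h u) - (u_h - I_h u)$ and analogously for $w$ and $c$; the interpolation components are controlled immediately by \eqref{(4.2)}--\eqref{(4.2)bis} under Assumption~\ref{ass:sol}, so the task reduces to bounding the discrete errors $e_u := u_h - I_h u$, $e_w := w_h - I_h w$, and $e_c := c_h - I_h c$. A bootstrap on $L^\infty$-norms of $u_{hx}$ and $Q_h$, recovered a posteriori through \eqref{IE-2}, will ultimately determine the threshold $h_0$.

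First I would derive estimates for $e_u$, $e_w$ and $e_{u,t}$ along the lines of \cite{DD06}. Testing the error equation from \eqref{(2.4)}--\eqref{(2.10)} with $\varphi_h = e_{u,t}$ and the time-differentiated error equation from \eqref{(2.5)}--\eqref{(2.11)} with $\psi_h = e_w$, and exploiting the DD06 cancellation, one obtains a differential inequality with dissipation $\|e_{u,t}\|_{L^2}^2 + \mu(h)\|e_{u,tx}\|_{L^2}^2$ and principal energy $\int_I w_h^2/Q_h\, dx$. Two new contributions must be handled: the coupling on the right-hand side of \eqref{(2.10)} produces a remainder bounded by $\|f(c)-I_h(f(c_h))\|_{L^2} \leq C(h^2\|c\|_{H^2} + \|c-c_h\|_{L^2})$ thanks to \eqref{fbound}, and the consistency residual from the penalty term, $\mu(h)\int_I u_{xt}\, e_{u,tx}\, dx$, is absorbed via Young at the cost $\mu(h)\|u_{xt}\|_{L^2}^2$, which is bounded in view of \eqref{(1.14)}. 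Integrating in time yields
\begin{equation*}
\|e_u(t)\|_{H^1}^2 + \|e_w(t)\|_{L^2}^2 + \int_0^t\!\Bigl(\|e_{u,t}\|_{L^2}^2 + \mu(h)\|e_{u,tx}\|_{L^2}^2 + \|e_{w,x}\|_{L^2}^2\Bigr) ds \leq Ch^2 + C\!\int_0^t\!\|e_c\|_{L^2}^2\, ds.
\end{equation*}

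Next I would estimate $e_c$ by testing the difference of \eqref{c-gleichung-long} and \eqref{ch-gleichung-long} with $\zeta_h = e_c$. Expanding $(cQ)_t - (c_hQ_h)_t$ one picks up terms proportional to $c(Q_t - Q_{ht})$ and $Q_{ht}(c-c_h)$, each of which contains the factor $u_{xt} - u_{hxt}$. This is the main obstacle: $e_{u,tx}$ is controlled only in $L^2(0,T;L^2(I))$, with norm of order $h^{(2-r)/2}$, so no $L^\infty$-in-time bound is available and the standard Gronwall closure fails. The resulting differential inequality takes the schematic form
\begin{equation*}
\frac{d}{dt}\int_I e_c^2\, Q_h\, dx + \|e_{c,x}\|_{L^2}^2 \leq C\bigl(1 + \|e_{u,tx}(t)\|_{L^2}\bigr)\|e_c\|_{L^2}^2 + Ch^2 + C\|e_u(t)\|_{H^1}^2,
\end{equation*}
with coefficient $1+\|e_{u,tx}\|_{L^2}$ lying in $L^2(0,T)$ but not $L^\infty$. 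Here I would invoke the generalised Gronwall lemma (Lemma~\ref{genG}), which handles exactly such integrable yet possibly unbounded coefficients, to deduce $\sup_t\|e_c\|_{L^2}^2 + \int_0^T\|e_{c,x}\|_{L^2}^2\, ds \leq Ch^2$. This is precisely where the restriction $r<2$ in \eqref{def:mu} becomes essential, since $h^{(2-r)/2}$ must vanish as $h\to 0$. Substituting this bound back into the first estimate closes the coupled system and gives the claimed rates for $u-u_h$, $w-w_h$ and $u_t-u_{ht}$; the final estimate $\int_0^T\|(u-u_h)_{tx}\|_{L^2}^2\, ds \leq Ch^{2-r}$ is read off from the dissipation term after dividing through by $\mu(h)$.
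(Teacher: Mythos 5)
There is a genuine gap, on two connected points.

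First, your decomposition $u-u_h=(u-I_hu)-(u_h-I_hu)$ is not the one on which the ``DD06 cancellation'' rests. The paper (following \cite{DD06}) splits the error with the \emph{nonlinear Ritz projections} $\uhat$ and $\what$ from \eqref{def-uhat} and \eqref{(2.22)}: the identity $\int_I\bigl(\tfrac{u_x}{Q}-\tfrac{\uhatx}{\Qhat}\bigr)\xi_{hx}\,dx=0$ and the projection estimates \eqref{(2.18)}--\eqref{(2.26)} (in particular $\|\rho_{ut}\|_{L^2}\leq Ch^2|\log h|^2$ and $\|\rho_{utx}\|_{L^2}\leq Ch$) are exactly what make the tests with $e_{ut}$ and $e_w$ close at rate $h^2$, and the choice $\uich=\hat{\uic}$ in \eqref{icondh} gives $e_u(0)=0$, which is used repeatedly. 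If you split with the Lagrange interpolant, the error equation acquires nonvanishing consistency terms of the type $\int_I\bigl(\tfrac{u_x}{Q}-\tfrac{(I_hu)_x}{\sqrt{1+|(I_hu)_x|^2}}\bigr)\varphi_{hx}\,dx$, which are only $O(h)$ and which, when tested with $\varphi_h=e_{ut}$ (so that $\varphi_{hx}=e_{utx}$ is controlled only through the $\mu(h)$-weighted dissipation), cost a factor $\mu(h)^{-1}$ and destroy the claimed $O(h^2)$ bound in your first display. So that estimate is not justified as written; you need the Ritz projections (or an argument replacing them), not the interpolant.

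Second, and more importantly, your schematic inequality for $e_c$ misses the actual obstruction and therefore misuses Lemma~\ref{genG}. As you wrote it, the inequality is \emph{linear} in $\|e_c\|_{L^2}^2$ with coefficient $C(1+\|e_{utx}(t)\|_{L^2})\in L^2(0,T)\subset L^1(0,T)$; the classical Gronwall lemma already handles integrable coefficients, no generalised version is needed, and no restriction $r<2$ would arise (even $r=2$ keeps $\int_0^T\|e_{utx}\|_{L^2}\,dt$ bounded). The true difficulty, treated in Lemma~\ref{lemma:c-ch}, sits in the terms $K_{1,2}$ and $K_2$, where $(\uhat-u_h)_{xt}$ must be paired with $\|c-c_h\|_{L^\infty}\leq \hat C(I)\|c-c_h\|_{H^1}$, producing contributions like $\|c-c_h\|_{H^1}\,(h+\|e_{ux}\|_{L^2})\,\|e_{utx}\|_{L^2}$ and hence, after inserting \eqref{(3.21)} and \eqref{(3.21)bis} (whose right-hand sides themselves contain $\int_0^{\bar t}\|c-c_h\|_{L^2}^2\,dt$), the term $\tfrac{C}{\mu(h)}\int_0^{\bar t}\|c-c_h\|_{L^2}^4\,dt$ in \eqref{zwischenergebnis}. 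This term is \emph{quadratic} in the quantity being estimated; it is precisely the superlinear structure $y_3\leq B(\sup y_1^{\beta})\int(y_1+y_2)$ for which Lemma~\ref{genG} is invoked, and the smallness condition \eqref{condAE} with $A\sim h^2$, $B\sim\mu(h)^{-1}$ is what forces $h^2\lesssim\mu(h)=C_\mu h^r$, i.e.\ $r<2$ together with $h\leq h_0$. Your stated reason for $r<2$ (that $h^{(2-r)/2}$ must vanish) is not the operative mechanism, and with the structure as you wrote it the key step of the proof—closing the coupled system despite the quadratic error term—is absent. The remaining ingredients you mention (bootstrap via \eqref{IE-2} to get $T_h=T$, reading off $\int_0^T\|(u-u_h)_{tx}\|^2\leq Ch^{2-r}$ from the $\mu(h)$-weighted dissipation together with \eqref{(2.21)}) do match the paper once the above two points are repaired.
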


\section{Error estimates}
\subsection{Nonlinear Ritz projections}

Our error analysis relies strongly on results presented in \cite{DD06}, which are based on suitable nonlinear Ritz projections for $u$ and $w$. We recall here their definition and properties.
Let $\uhat$ be defined by: $\uhat-I_{h}(\ubc) \in X_{h0}$ and
\begin{align}
\label{def-uhat}
\int_{I} \frac{\uhatx \xi_{hx}}{\Qhat} = \int_{I} \frac{u_{x} \xi_{hx}}{Q} \qquad \forall \xi_{h} \in X_{h0},
\end{align}
\[
 \Qhat(x,t) := \sqrt{1 + \uhatx^{2}(x,t)}.
\]
Note that time $t$ here is a parameter only. For the error
\begin{align*}
\rho_{u} := u -\uhat
\end{align*}
we have the following estimates (see \cite[\S~2]{DD06} and references given in there; to simplify notation we write $\rho_{ux}$ for $(\rho_{u})_{x}$ and so on):
\begin{align}
\label{(2.18)}
& \sup_{0 \leq t \leq T} \| \rho_{u}(t) \|_{L^{2}(I)} + h \sup_{0 \leq t \leq T} \| \rho_{ux}(t) \|_{L^{2}(I)} \leq Ch^{2}, \\
\label{(2.19)}
 & \sup_{0 \leq t \leq T} \| \rho_{u}(t) \|_{L^{\infty}(I)}+ h \sup_{0 \leq t \leq T} \| \rho_{u x}(t) \|_{L^{\infty}(I)} \leq Ch^{2}|\log h| ,\\
 \label{(2.20)}
& \sup_{0 \leq t \leq T} \| \rho_{ut}(t) \|_{L^{2}(I)} \leq Ch^{2}|\log h|^{2}, \\
\label{(2.21)}
& \sup_{0 \leq t \leq T} \| \rho_{utx}(t) \|_{L^{2}(I)} \leq Ch.
\end{align}
We also define a projection $\what \in X_{h0}$ of $w$ with the help of $\uhat$ as follows:
\begin{align}\label{(2.22)}
\int_{I} E(\uhatx)\whatx \varphi_{hx} dx = \int_{I} E(u_{x}) w_{x} \varphi_{hx} dx + \frac{1}{2} \int_{I} w^{2} \left( \frac{u_{x}}{Q^{3}} -\frac{\uhatx}{\Qhat^{3}}\right) \varphi_{hx} dx \quad \forall \, \varphi_{h} \in X_{h0}
\end{align}
where we set
\begin{align} \label{(2.1)}
E(p):= \frac{1}{(1+ p^{2})^{\frac{3}{2}}} \qquad \text{ for } p \in \mbR.
\end{align}
Note that there is some constant $C>0$ such that $|E(p)-E(q)| \leq C |p-q|$ for all $p,q \in \mbR$. The proof of the following bounds for the error
\begin{align*}
\rho_{w} := w- \what
\end{align*}
is given in \cite[Appendix, Lemma~A.1]{DD06}: 
\begin{align}
\label{(2.23)}
& \sup_{0 \leq t \leq T} \| \rho_{w x}(t) \|_{L^{2}(I)} \leq Ch,\\
\label{(2.24)}
& \sup_{0 \leq t \leq T} \| \rho_{w }(t) \|_{L^{2}(I)} \leq Ch^{2} |\log h|,\\
\label{(2.25)}
& \sup_{0 \leq t \leq T} \| \rho_{w tx}(t) \|_{L^{2}(I)} \leq Ch,\\
\label{(2.26)}
& \sup_{0 \leq t \leq T} \| \rho_{w t}(t) \|_{L^{2}(I)} \leq Ch^{2}|\log h|^{2}.
\end{align}
The equations \eqref{(1.13)}, \eqref{(1.14)}, \eqref{(2.19)}--\eqref{(2.21)}, \eqref{(2.23)}--\eqref{(2.26)} together with interpolation and inverse estimates imply that
\begin{align}\label{(2.27)}
\| \uhat \|_{W^{1, \infty}(I)}, \| \uhatt \|_{W^{1, \infty}(I)}, \| \what \|_{W^{1, \infty}(I)}, \| \whatt \|_{W^{1, \infty}(I)} \leq C
\end{align}
uniformly in $h$ and time.

\subsection{Discrete initial data and first estimates}

Let $(u_{h},w_{h}, c_{h})$ be the discrete solution on the time interval $[0,T]$. Define
\begin{align}
& C_{0}:= \sup_{x \in I, \, t \in [0,T]} Q(x,t), & \qquad & C_{1}:= \sup_{x \in [0,1], \, t \in [0,T]} |w(x,t)|, \\ \label{cbounds}
& C_{2}:= \| c \|_{C([0,T], H^{1}(I))}, & \qquad & C_{3}:= \|c \|_{L^{2}((0,T), H^{1}(I))}.
\end{align}
For the discrete solution we observe that on the time interval $[0,\intertime]$ (for $\intertime$ sufficiently small) we have that
\begin{align} \label{(3.2)}
\sup_{x \in I, \, t \in [0,\intertime]} Q_{h}(x,t) &\leq 2 C_{0}, & \qquad \sup_{x \in I, \, t \in [0,\intertime]} |w_{h}(x,t)| &\leq 2C_{1}, \\
 \qquad \label{(3.2)bis} 
\|c_{h} \|_{C([0,\intertime], L^{\infty}(I))} &\leq 2\embconst C_{2}, & \qquad \|c_{h} \|_{L^{2}((0,\intertime), H^{1}(I))} &\leq 2 C_{3}
\end{align}
thanks to the choice of initial conditions (see Lemma~\ref{indata} below), the smoothness assumptions on $(u,c)$, and a continuity argument 
(here, $\embconst$ denotes the constant for the embedding $H^{1}(I) \hookrightarrow L^{\infty}(I)$ which depends on the length of $I$; in our case $I=(0,1)$ one can actually bound $\embconst$ by one). 
Define 
\begin{align} \label{(3.3)}
T_{h} := \sup \{ \intertime \in [0,T]\, | \, (\ref{(3.2)}), (\ref{(3.2)bis}) \text{ hold on } [0,\intertime] \}.
\end{align}
We employ the well known strategy to first derive error estimates on the time interval $[0, T_{h})$ and then use these bounds to infer that $T_{h}=T$. Therefore in what follows we shall assume~\eqref{(3.2)} and \eqref{(3.2)bis} (without specifying this in every statement).
We decompose the errors $u-u_{h}$ and $w-w_{h}$ according to
\begin{align*}
u-u_{h}&= (u-\uhat)+ (\uhat -u_{h})=\rho_{u} + e_{u}, &\qquad& \text{ where } e_{u}:=\uhat -u_{h},\\
w-w_{h} & = (w -\what)+ (\what -w_{h}) =\rho_{w} + e_{w}, &\qquad& \text{ where } e_{w}:=\what -w_{h}.
\end{align*}
Sometimes it is convenient to work with the smooth and discrete unit normals
\begin{align*}
\nu = \frac{(-u_{x},1)}{Q}, \qquad \hat{\nu}_{h} := \frac{(-\uhatx, 1)}{\Qhat}, \qquad \nu_{h}:=\frac{(-u_{hx}, 1)}{Q_{h}}.
\end{align*}
Note that in \cite[(3.4)]{DD06} it is shown that
\begin{align}\label{(3.4)orig}
|\hat{\nu}_{h} -\nu_{h}| \leq |(\uhat- u_{h})_{x}| \leq (1 + \sup_{I} |\uhatx|) Q_{h}|\hat{\nu}_{h} -\nu_{h}|,
\end{align}
which leads to 
\begin{align}\label{(3.4)}
|\hat{\nu}_{h} -\nu_{h}| \leq |(\uhat- u_{h})_{x}|= |e_{ux}| \leq C |\hat{\nu}_{h} -\nu_{h}|,
\end{align}
where the constant $C$ depends on $C_{0}$ and on the constant appearing in \eqref{(2.27)}. Clearly
\begin{align}\label{(3.4)post}
|\Qhat - Q_{h}| \leq \big{|} |(\uhatx,-1)| - |(u_{hx},-1)| \big{|} \leq \big{|} (\uhatx,-1) - (u_{hx},-1) \big{|} = | \uhatx - u_{hx} | = |e_{ux}|.
\end{align}
In the estimates that will follow we will also use the fact that
\begin{align}
\label{pippoQ}
|Q-Q_{h}| &\leq |u_{x}-u_{hx}| \leq |\rho_{ux}| + |e_{ux}|,\\
\label{pipponu}
|\nu -\nu_{h}| &= \left|\frac{(Q_{h}-Q)}{Q_{h}} \frac{1}{Q}(u_{x},-1) + \frac{1}{Q_{h}}(u_{x}-u_{hx}, 0) \right | \leq C |\rho_{ux}| +C |e_{ux}|,
\end{align}
which easily follow employing the boundedness of $Q$ and $Q_{h}$.

We pick the following initial values in \eqref{disicond}: 
\begin{equation} \label{icondh}
 \uich(x) := \hat{\uic}(x), \quad \cich(x) := I_{h}(\cic)(x), \qquad x \in \bar{I},
\end{equation}
where $\hat{\uic}$ is the non-linear projection of $\uic$ defined in \eqref{def-uhat}.
\begin{lemma}\label{indata}
For the choice of initial data in \eqref{icondh} we have that
\begin{align*}
 e_{u}(0) \equiv 0, \qquad \|e_{w}(0)\|_{L^{2}(I) } \leq Ch. 
\end{align*}
\end{lemma}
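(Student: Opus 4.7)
The first identity I expect to be essentially tautological: by the choice of initial data \eqref{icondh}, $u_h(\cdot,0) = \uich = \hat{\uic}$, while the Ritz projection \eqref{def-uhat} evaluated at $t=0$ gives $\uhat(\cdot,0) = \hat{\uic}$ as well. Thus $e_u(0) = \uhat(0) - u_h(0) \equiv 0$, and in particular $Q_h(\cdot,0) = \Qhat(\cdot,0)$. I would record this first and call this shared length element $\Qhat_0$ for brevity (and write $Q_0$ for $Q(\cdot,0)$).

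The substantive content is the estimate on $e_w(0)$. The plan is to exploit that $w_h(\cdot,0)$, while not prescribed explicitly in \eqref{disicond}, is nonetheless uniquely determined by equation \eqref{(2.11)} evaluated at $t=0$. Subtracting \eqref{(2.11)}$|_{t=0}$ from the continuous identity \eqref{(2.5)}$|_{t=0}$ tested against any $\psi_h \in X_{h0}$ yields
\begin{equation*}
\int_I \left(\frac{w_0}{Q_0} - \frac{w_h(0)}{\Qhat_0}\right)\psi_h \, dx \;=\; \int_I \left(\frac{\uicx}{Q_0} - \frac{\uhatx(0)}{\Qhat_0}\right)\psi_{hx}\, dx.
\end{equation*}
The key observation I would emphasize is that the defining relation \eqref{def-uhat} of the Ritz projection $\uhat$ at $t=0$ makes the right-hand side vanish for every test function in $X_{h0}$. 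Since $e_w(0) = \what(0) - w_h(0) \in X_{h0}$ is admissible, choosing $\psi_h = e_w(0)$ eliminates the gradient side entirely; this is what saves us from picking up an inverse-estimate factor $h^{-1}$ that would otherwise be fatal.

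After inserting $\pm \what(0)/\Qhat_0 \cdot e_w(0)$ on the remaining (left-hand) side, the equation rearranges to
\begin{equation*}
\int_I \frac{e_w(0)^2}{\Qhat_0}\, dx \;=\; \int_I \left(\frac{\what(0)}{\Qhat_0} - \frac{w_0}{Q_0}\right) e_w(0)\, dx.
\end{equation*}
The integrand on the right I would split as $\rho_w(0)/\Qhat_0 + w_0 (Q_0 - \Qhat_0)/(Q_0 \Qhat_0)$, then bound using the elementary inequality $|Q_0 - \Qhat_0| \leq |\rho_{ux}(0)|$ (which is the analogue of \eqref{(3.4)post} comparing the continuous object with its projection), the uniform $L^\infty$ bound on $w_0$ from the assumed regularity, and the fact that $Q_0, \Qhat_0$ are bounded above and below thanks to \eqref{(1.13)} and \eqref{(2.27)}. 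Applying Cauchy--Schwarz and absorbing one factor of $\|e_w(0)\|_{L^2}$ produces
\begin{equation*}
\|e_w(0)\|_{L^2(I)} \;\leq\; C\bigl(\|\rho_w(0)\|_{L^2(I)} + \|\rho_{ux}(0)\|_{L^2(I)}\bigr),
\end{equation*}
and the conclusion follows by substituting the known projection bounds \eqref{(2.18)} and \eqref{(2.24)}, both of which are $O(h)$ or better. The only step that is not routine is recognising that the Ritz projection identity \eqref{def-uhat} kills the bad term; once that is spotted, the remainder is bookkeeping.
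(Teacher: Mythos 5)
Your argument is correct and follows essentially the same route as the paper: both proofs derive the $L^2$-orthogonality relation $\int_I \big(\tfrac{w(0)}{Q(0)} - \tfrac{w_h(0)}{Q_h(0)}\big)\xi_h\,dx = 0$ for all $\xi_h \in X_{h0}$ by chaining \eqref{(2.11)}, \eqref{def-uhat} and \eqref{(2.5)} at $t=0$ (using $u_h(0)=\hat{u}_{h}(0)$, hence $Q_h(0)=\Qhat(0)$), and then conclude via $|Q(0)-Q_h(0)|\le|\rho_{ux}(0)|$ together with \eqref{(2.18)}, \eqref{(2.24)}, \eqref{(2.27)} and the boundedness of $w$, $Q$, $Q_h(0)$. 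The only (harmless) difference is the choice of test function: the paper tests with $I_h(w(0)) - w_h(0)$, first shows $\|w(0)-w_h(0)\|_{L^2(I)}\le Ch$ and then writes $e_w(0) = -\rho_w(0) + (w(0)-w_h(0))$, whereas you test with $e_w(0)$ directly and absorb $\rho_w(0)$ inside the estimate (up to an immaterial sign in your splitting of the integrand), trading the interpolation error $w(0)-I_h(w(0))$ for the projection error $\rho_w(0)$ — the rate $Ch$ is the same either way.
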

\begin{proof}
The first statement follows directly from the definition. For the error estimate of $e_{w}(0)$, observe that since $\hat{u}_{h}(\cdot, 0)=u_{0h}(\cdot)$ then by \eqref{(2.11)}, \eqref{def-uhat}, and \eqref{(2.5)} 
\begin{align*}
\int_{I} \frac{w_{h}(0)\xi_{h}}{Q_{h}(0)} = \int_{I} \frac{\uichx \xi_{hx}}{Q_{h}(0)} = \int_{I} \frac{\uicx \xi_{hx}}{Q(0)} = \int_{I} \frac{w(0) \xi_{h}}{Q(0)}
\end{align*}
for any $\xi_{h} \in X_{h0}$. Subtraction gives
\begin{align*}
\int_{I} \left( \frac{w(0)}{Q(0)} - \frac{w_{h}(0)}{Q_{h}(0)} \right) \xi_{h} = 0 \qquad \forall \, \xi_{h} \in X_{h0}.
\end{align*}
Testing with $\xi_{h} = I_{h}(w(0)) - w_{h}(0)$ gives 
\begin{align*}
\int_{I} \frac{|w(0) - w_{h}(0)|^{2}}{Q_{h}(0)} &= \int_{I} w(0)(w(0)-w_{h}(0)) \frac{Q(0)-Q_{h}(0)}{Q(0) Q_{h}(0)}
\\
& \quad + \int_{I} \frac{(w(0)-w_{h}(0))}{Q_{h}(0)}(w(0)-I_{h}(w(0))) \\
& \quad + \int_{I}\frac{w(0)}{Q(0) Q_{h}(0)} (Q_{h}(0)-Q(0))(w(0)-I_{h}(w(0))).
\end{align*}
We infer that $\|w(0)-w_{h}(0)\|_{L^{2}(I)} \leq C h $ by a standard $\epsilon$-Young argument, \eqref{(2.18)}, \eqref{(1.13)}, \eqref{(4.2)}, \eqref{(2.27)}, and the boundedness of $1 \leq Q_{h}(0) \leq C_{0} + C\|\rho_{ux}\|_{L^{\infty}} \leq \frac{3}{2} C_{0}$ by \eqref{(2.19)}, \eqref{(2.27)}, and $h$ small enough. 
The claim now follows by writing $e_{w}(0)=-\rho_{w}(0)+ (w(0)-w_{h}(0))$ and using \eqref{(2.24)}.
\end{proof}

\begin{remark}\label{regularization}
In the discrete formulation of the problem we have introduced a regularization term weigthed by $\mu(h)$. This is motivated by the necessity of finding an error estimate for $|(Q-Q_{h})_{t}|$ in Lemma~\ref{lemma:c-ch} below (cf. term $K_{1}$ in the proof).
Note that we can write
\begin{align*}
|(Q-Q_{h})_{t}| = \left |\frac{u_{hx}}{Q_{h}}(u_{xt}-u_{hxt}) + u_{xt} ( \frac{u_{x}}{Q} - \frac{u_{hx}}{Q_{h}}) \right|
\leq |\rho_{uxt}| + |e_{uxt}| + C |\nu-\nu_{h}|.
\end{align*}
The regularisation helps in deriving an estimate for the ``tricky'' term $|e_{uxt}|$, see \eqref{(3.21)bis} below.
\end{remark}

\subsection{Error estimates for $e_{u}$ and $e_{w}$}

The following error estimates for $e_{u}$ and $e_{w}$ are obtained through appropriate modification of the corresponding error estimates shown in \cite{DD06}. We have used the same notation on purpose so that it will be easier for the reader to look up the details which are not repeated here for the sake of conciseness.
Moreover we give statements in such a way that it is easy to make a distinction as for which contributions come from the ``new'' coupling and regularising terms and those that have a purely geometrical meaning. 

\begin{lemma}\label{lemma3.1}
Suppose that $F:\mbR \to \mbR$ is twice continuously differentiable and that $\zeta \in H^{1}_{0}(I)$. Then
$$ \int_{I}(F(u_{x}) -F(\uhatx)) \zeta \, dx = \int_{I} \rho_{u}\frac{\partial}{\partial x} (\zeta F'(u_{x})) dx +R,$$
where $R$ satisfies $|R| \leq C h^{2}|\log h| \| \zeta \|_{L^{2}(I)}$.
\end{lemma}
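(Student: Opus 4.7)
The approach is a pointwise Taylor expansion of $F(u_x) - F(\uhatx)$ in its argument, followed by an integration by parts that moves a derivative off $\rho_{ux} = (\rho_u)_x$. The main contribution should be the linear term in $\rho_{ux}$, while the quadratic remainder will be absorbed into $R$.

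First, since $F \in C^2(\mbR)$, Taylor's theorem gives pointwise
$$F(u_x) - F(\uhatx) = F'(u_x)\rho_{ux} - \tfrac{1}{2}F''(\eta(x,t))\rho_{ux}^2,$$
where $\eta(x,t)$ lies between $u_x(x,t)$ and $\uhatx(x,t)$. The regularity \eqref{(1.13)} of $u$ and the bound \eqref{(2.27)} on $\uhat$ confine $\eta$ to a fixed compact set, so $|F''(\eta)| \leq C$ uniformly in $(x,t)$.

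Next, multiplying by $\zeta$ and integrating, the linear piece is handled by integration by parts. Since $\zeta \in H^1_0(I)$ (and in fact $\rho_u$ itself vanishes on $\partial I$ by the definition of $\uhat$), the boundary terms drop out and
$$\int_I F'(u_x)\rho_{ux}\zeta\,dx = -\int_I \rho_u\,\partial_x\!\bigl(\zeta F'(u_x)\bigr)\,dx.$$
The smoothness of $u$ in \eqref{(1.13)} ensures that $\zeta F'(u_x) \in H^1(I)$, so the integration by parts is valid. Combined with the Taylor expansion, this yields the claimed identity (up to the overall sign of the main term), with
$$R = -\tfrac{1}{2}\int_I F''(\eta)\,\rho_{ux}^2\,\zeta\,dx.$$

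The only real work is the remainder bound, and the main obstacle is getting exactly one power of $|\log h|$ rather than two. The idea is to split the three factors of $R$ via Hölder in the combination $L^\infty \cdot L^2 \cdot L^2$:
$$|R| \leq C\,\|\rho_{ux}\|_{L^\infty(I)}\,\|\rho_{ux}\|_{L^2(I)}\,\|\zeta\|_{L^2(I)}.$$
Substituting \eqref{(2.19)} for the $L^\infty$ factor and \eqref{(2.18)} for the $L^2$ factor gives
$$|R| \leq C\,(h|\log h|)\cdot h\cdot \|\zeta\|_{L^2(I)} = Ch^2|\log h|\,\|\zeta\|_{L^2(I)},$$
as required. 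The splitting matters: an $L^\infty \cdot L^\infty \cdot L^1$ distribution would cost an extra logarithm, while an $L^2 \cdot L^2 \cdot L^\infty$ distribution would require $\|\zeta\|_{L^\infty}$, which is not available uniformly in $\zeta \in H^1_0(I)$. Only the chosen split threads the needle.
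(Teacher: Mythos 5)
Your proof is correct and takes essentially the same route as the paper's, which simply defers to \cite[Lemma~3.1]{DD06} and lists exactly your ingredients: a Taylor/mean-value expansion of $F$, integration by parts using $\zeta \in H^{1}_{0}(I)$, the smoothness of $u$ from \eqref{(1.13)} together with \eqref{(2.27)}, and the $L^{\infty}\cdot L^{2}\cdot L^{2}$ splitting of the quadratic remainder via \eqref{(2.19)} and \eqref{(2.18)}. The sign issue you flag is genuine but harmless: with this paper's convention $\rho_{u}=u-\uhat$ the main term comes out as $-\int_{I}\rho_{u}\,\partial_x\bigl(\zeta F'(u_{x})\bigr)\,dx$, so the plus sign in the statement reflects the opposite sign convention for the projection error (and cannot be absorbed into $R$, whose bound involves only $\|\zeta\|_{L^{2}(I)}$), which does not affect how the lemma is applied.
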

\begin{proof}
See \cite[Lemma~3.1]{DD06}. It uses a mean value theorem, the smoothness of $F$ and $u$ (recall \eqref{(1.13)}) and the bounds \eqref{(2.18)}, \eqref{(2.19)}, and \eqref{(2.27)}.
\end{proof}

\begin{lemma}\label{lemma3.2}
For every $\epsilon >0$ there exists $C_{\epsilon}$ such that
\begin{align*}
\| e_{ux}(t) \|_{L^{2}(I)}^{2} \leq \epsilon \| e_{w}(t) \|_{L^{2}(I)}^{2} + C_{\epsilon} \| e_{u}(t) \|_{L^{2}(I)}^{2} + C h^{4} |\log h|^{2}, \qquad 0 \leq t <T_{h}.
\end{align*}
\end{lemma}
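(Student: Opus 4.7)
The plan is to test the difference of \eqref{(2.5)} and \eqref{(2.11)} with $e_u \in X_{h0} \subset H^1_0(I)$, exploit the Ritz identity \eqref{def-uhat} to rewrite the right-hand side, and so generate a coercive $\|e_{ux}\|_{L^2}^2$ term. Subtracting and using \eqref{def-uhat} with $\xi_h = e_u$ to replace $\int_I \frac{u_x}{Q} e_{ux}$ by $\int_I \frac{\uhatx}{\Qhat} e_{ux}$ yields
\begin{align*}
\int_I \left( \frac{w}{Q} - \frac{w_h}{Q_h} \right) e_u \, dx \, = \, \int_I \left( \frac{\uhatx}{\Qhat} - \frac{u_{hx}}{Q_h} \right) e_{ux} \, dx.
\end{align*}

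The mean value theorem applied to $p \mapsto p/\sqrt{1+p^2}$ rewrites the right-hand side as $\int_I (1+\xi^2)^{-3/2} e_{ux}^2 \, dx$ for some $\xi$ between $\uhatx$ and $u_{hx}$. The uniform bounds on $\uhatx$ from \eqref{(2.27)} and on $u_{hx}$ from \eqref{(3.2)} on $[0,T_h)$ then produce a coercivity constant $c > 0$ with $\int_I (\uhatx/\Qhat - u_{hx}/Q_h) e_{ux} \geq c \|e_{ux}\|_{L^2}^2$.

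For the left-hand side I split $\frac{w}{Q} - \frac{w_h}{Q_h} = \bigl(\frac{w}{Q} - \frac{\what}{\Qhat}\bigr) + \bigl(\frac{\what}{\Qhat} - \frac{w_h}{Q_h}\bigr)$. The second bracket rearranges as $\frac{e_w}{\Qhat} + \frac{w_h(Q_h - \Qhat)}{\Qhat Q_h}$; bounding $|Q_h - \Qhat| \leq |e_{ux}|$ via \eqref{(3.4)post} together with $|w_h| \leq 2 C_1$ on $[0,T_h)$, Cauchy--Schwarz and Young's inequality, contributes $\epsilon\|e_w\|_{L^2}^2 + \epsilon\|e_{ux}\|_{L^2}^2 + C_\epsilon\|e_u\|_{L^2}^2$. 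The first bracket rearranges as $\frac{\rho_w}{\Qhat} + w\bigl(\tfrac{1}{Q} - \tfrac{1}{\Qhat}\bigr)$. The $\rho_w$-part is immediate from \eqref{(2.24)}: $\|\rho_w\|_{L^2}\|e_u\|_{L^2} \leq C h^2 |\log h| \, \|e_u\|_{L^2} \leq C h^4 |\log h|^2 + C \|e_u\|_{L^2}^2$.

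The main obstacle is the $1/Q - 1/\Qhat$ term. A direct estimate based on $\|\rho_{ux}\|_{L^2}\leq Ch$ only yields an $O(h^2)$ remainder, which is too weak. To gain an additional factor of $h$, the key trick is to invoke Lemma~\ref{lemma3.1} with $F(p) = 1/\sqrt{1+p^2}$ (so that $F(u_x) - F(\uhatx) = 1/Q - 1/\Qhat$) and test function $\zeta = w e_u \in H^1_0(I)$. This gives
\begin{align*}
\int_I w e_u \left( \frac{1}{Q} - \frac{1}{\Qhat} \right) dx = \int_I \rho_u \frac{\partial}{\partial x}\bigl( w e_u F'(u_x) \bigr) dx + R, \quad |R| \leq C h^2 |\log h| \|e_u\|_{L^2}.
\end{align*}
Expanding the derivative produces terms bounded by $C(|e_u|+|e_{ux}|)$ thanks to the regularity of $w$ and $u$ in \eqref{(1.13)}--\eqref{cond-c}, so the main piece is controlled by $C\|\rho_u\|_{L^2}(\|e_u\|_{L^2} + \|e_{ux}\|_{L^2}) \leq C h^2 (\|e_u\|_{L^2} + \|e_{ux}\|_{L^2})$ using \eqref{(2.18)}. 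A further Young inequality absorbs $\epsilon\|e_{ux}\|^2$ and leaves a residual of order $h^4 |\log h|^2 + \|e_u\|_{L^2}^2$. Collecting all contributions and choosing $\epsilon$ small enough relative to the coercivity constant $c$ to absorb the $\|e_{ux}\|^2$ terms on the left, the claim follows.
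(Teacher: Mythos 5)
Your proof is correct and follows essentially the same route as the paper: starting from the identity $\int_I (\uhatx/\Qhat - u_{hx}/Q_h)\varphi_{hx}\,dx = \int_I (w/Q - w_h/Q_h)\varphi_h\,dx$ obtained from \eqref{def-uhat}, \eqref{(2.5)}, \eqref{(2.11)}, and testing with $\varphi_h = e_u$, which is exactly the strategy the paper cites from Deckelnick--Dziuk. You have also correctly reconstructed the key ingredient that the paper leaves to the reference, namely invoking Lemma~\ref{lemma3.1} with $F(p)=(1+p^2)^{-1/2}$ and $\zeta = w\,e_u$ to upgrade the $1/Q - 1/\Qhat$ term from $O(h^2)$ to the required $O(h^4|\log h|^2)$ remainder.
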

\begin{proof}
See \cite[Lemma~3.2]{DD06}. Here one starts from the equation
$$ \int_{I} \left(\frac{\uhatx}{\Qhat} - \frac{u_{hx}}{Q_{h}} \right) \varphi_{hx} \, dx = \int_{I} \left(\frac{w}{Q} - \frac{w_{h}}{Q_{h}}\right) \varphi_{h} \, dx \qquad \forall \, \, \varphi \in X_{h0},$$
which follows from \eqref{def-uhat}, \eqref{(2.5)}, and \eqref{(2.11)}, and tests with $\varphi_{h}=e_{u}$.
\end{proof}
\begin{lemma}\label{lemma3.3}
For $0 \leq t <T_{h}$ we have
\begin{align*}
\| e_{wx}(t) \|_{L^{2}(I)}^{2} &\leq C ( \| e_{ux}(t) \|_{L^{2}(I)}^{2} +\| e_{ut}(t) \|_{L^{2}(I)}^{2} + \| e_{w}(t) \|_{L^{2}(I)}^{2} + h^{4} |\log h|^{4} ) \\
& \qquad + C \| (c-c_{h})(t) \|_{L^{2}(I)}^{2} + C \mu(h)^{2} \|u_{hxt}(t)\|_{L^{2}(I)}^{2}
+ Ch^{2} (1+ \|c_{hx}(t)\|_{L^{2}(I)}^{2}). 
\end{align*}
\end{lemma}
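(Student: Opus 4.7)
The strategy is to combine the weak equations \eqref{(2.4)}, \eqref{(2.10)}, and the projection \eqref{(2.22)} to produce an error identity that I can test against $\varphi_{h}=e_{w}\in X_{h0}$. Since $E(u_{x})=Q^{-3}$, equation \eqref{(2.22)} reads
\begin{align*}
\int_{I}\frac{\whatx\varphi_{hx}}{\Qhat^{3}}\, dx = \int_{I}\frac{w_{x}\varphi_{hx}}{Q^{3}}\, dx + \frac{1}{2}\int_{I}w^{2}\Big(\frac{u_{x}}{Q^{3}}-\frac{\uhatx}{\Qhat^{3}}\Big)\varphi_{hx}\, dx;
\end{align*}
substituting for the $w_{x}$-integral from \eqref{(2.4)} and subtracting \eqref{(2.10)} yields an identity whose left hand side, after the algebraic decomposition $\frac{\whatx}{\Qhat^{3}}-\frac{w_{hx}}{Q_{h}^{3}} = \frac{e_{wx}}{\Qhat^{3}}+w_{hx}\big(\frac{1}{\Qhat^{3}}-\frac{1}{Q_{h}^{3}}\big)$, contains a coercive term $\int_{I}\frac{e_{wx}\varphi_{hx}}{\Qhat^{3}}\,dx$; the right hand side collects the regularisation, source-difference, time-derivative, and nonlinear normal contributions. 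Testing with $\varphi_{h}=e_{w}$ and using $\Qhat\leq C$ from \eqref{(2.27)} bounds the coercive term from below by a positive multiple of $\|e_{wx}\|_{L^{2}(I)}^{2}$.

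The purely geometric contributions, namely the denominator correction $\int_{I}w_{hx}(Q_{h}^{-3}-\Qhat^{-3})e_{wx}\, dx$, the nonlinear term $\frac{1}{2}\int_{I}\big(\frac{w_{h}^{2}u_{hx}}{Q_{h}^{3}}-\frac{w^{2}\uhatx}{\Qhat^{3}}\big)e_{wx}\, dx$, and the time-derivative mismatch $\int_{I}\big(\frac{u_{ht}}{Q_{h}}-\frac{u_{t}}{Q}\big)e_{w}\,dx$, are handled exactly as in \cite[Lemma~3.3]{DD06}: via \eqref{(3.4)post} one reduces $|\Qhat-Q_{h}|$ to $|e_{ux}|$, and the splittings $w_{hx}=\whatx-e_{wx}$, $w_{h}^{2}-w^{2}=(w_{h}+w)(w_{h}-w)$, and $\frac{u_{ht}}{Q_{h}}-\frac{u_{t}}{Q}=\frac{u_{ht}-u_{t}}{Q_{h}}+\frac{u_{t}(Q-Q_{h})}{QQ_{h}}$ generate, after applying \eqref{pippoQ}, the $L^{\infty}$-bounds in \eqref{(2.27)}, and Young's inequality, the desired $\|e_{ux}\|^{2}$, $\|e_{ut}\|^{2}$, and $\|e_{w}\|^{2}$ terms, together with $\rho$-contributions that collapse to $Ch^{4}|\log h|^{4}$ via \eqref{(2.18)}, \eqref{(2.20)}, \eqref{(2.24)}.

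The two new contributions compared with \cite{DD06} are the regularisation and the source. Young's inequality directly gives
\begin{align*}
\Bigl|\int_{I}\mu(h)u_{hxt}e_{wx}\, dx\Bigr|\leq \eps\|e_{wx}\|_{L^{2}(I)}^{2}+C_{\eps}\mu(h)^{2}\|u_{hxt}\|_{L^{2}(I)}^{2},
\end{align*}
which accounts for the $\mu(h)^{2}\|u_{hxt}\|^{2}$ term in the claim. For the source I decompose
\begin{align*}
f(c)-I_{h}(f(c_{h}))=\bigl(f(c)-f(c_{h})\bigr)+\bigl(f(c_{h})-I_{h}(f(c_{h}))\bigr);
\end{align*}
the first summand is controlled by $C\|c-c_{h}\|_{L^{2}(I)}\|e_{w}\|_{L^{2}(I)}$ via the Lipschitz bound $\|f'\|_{L^{\infty}(\mbR)}\leq C$ from \eqref{fbound}, yielding the $\|c-c_{h}\|^{2}$ term. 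For the second, \eqref{(4.2)} with $k=1$, the chain rule, and \eqref{fbound} give
\begin{align*}
\|f(c_{h})-I_{h}(f(c_{h}))\|_{L^{2}(I)}\leq Ch\|f(c_{h})\|_{H^{1}(I)}\leq Ch\bigl(1+\|c_{hx}\|_{L^{2}(I)}\bigr),
\end{align*}
and a further Young step produces precisely the $Ch^{2}(1+\|c_{hx}\|^{2})$ term of the claim.

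The main obstacle is bookkeeping: each $\eps$ appearing in the applications of Young's inequality above must be chosen small enough relative to the coercivity constant so that the cumulative $\eps\|e_{wx}\|^{2}$ contributions can be absorbed into the coercive term on the left. A more subtle point is the quadratic self-interaction $\int_{I}(e_{wx})^{2}(\Qhat^{-3}-Q_{h}^{-3})\, dx$ coming from the denominator correction, which is bounded by $C\|e_{ux}\|_{L^{\infty}(I)}\|e_{wx}\|^{2}$; here the required uniform control of $\|e_{ux}\|_{L^{\infty}(I)}$ on $[0,T_{h})$ comes from \eqref{(2.27)} together with the standing hypothesis $Q_{h}\leq 2C_{0}$ from \eqref{(3.2)}, and the contribution remains consistent with the stated bound.
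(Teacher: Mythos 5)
Your overall strategy---combining \eqref{(2.4)} and \eqref{(2.22)}, subtracting \eqref{(2.10)}, testing the resulting identity with $\varphi_h=e_w$, treating the geometric terms as in \cite[Lemma~3.3]{DD06}, and estimating the two new contributions (penalty and source difference) by Young's inequality, the Lipschitz bound on $f$ and the interpolation estimate \eqref{(4.2)}---is exactly the paper's route, and your bounds for the $\mu(h)$-term and for $f(c)-I_h(f(c_h))$ coincide with the paper's \eqref{ieq:f}. The gap is in the coercivity step. You split $\frac{\whatx}{\Qhat^{3}}-\frac{w_{hx}}{Q_{h}^{3}}=\frac{e_{wx}}{\Qhat^{3}}+w_{hx}\bigl(\frac{1}{\Qhat^{3}}-\frac{1}{Q_{h}^{3}}\bigr)$, i.e.\ you attach the difference of the coefficients to $w_{hx}$, which is not uniformly bounded; patching this with $w_{hx}=\whatx-e_{wx}$ produces the quadratic self-interaction $\int_I e_{wx}^{2}\bigl(\Qhat^{-3}-Q_{h}^{-3}\bigr)\,dx$, which you bound by $C\|e_{ux}\|_{L^{\infty}(I)}\|e_{wx}\|_{L^{2}(I)}^{2}$. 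But on $[0,T_h)$ the only available control of $\|e_{ux}\|_{L^{\infty}(I)}$ is the $O(1)$ bound $\|\uhatx\|_{L^{\infty}(I)}+\|u_{hx}\|_{L^{\infty}(I)}\leq C+2C_{0}$ coming from \eqref{(2.27)} and \eqref{(3.2)}; it is not small. So this term is $C\|e_{wx}\|_{L^{2}(I)}^{2}$ with a fixed constant that has no reason to lie below the coercivity constant $\inf \Qhat^{-3}$, and since the lemma's right-hand side contains no $\|e_{wx}\|^{2}$ term it cannot be left unabsorbed either: the absorption step fails. Smallness of $\|e_{ux}\|_{L^{\infty}(I)}$ is only available a posteriori (from the error estimates one is trying to prove), so invoking it here would be circular.

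The repair is the opposite splitting, which is what the paper (following \cite{DD06}) uses: write $E(\uhatx)\whatx-E(u_{hx})w_{hx}=E(u_{hx})\,e_{wx}+\bigl(E(\uhatx)-E(u_{hx})\bigr)\whatx$. Then the coercive weight is $E(u_{hx})$, bounded below by a positive constant depending only on $C_{0}$ thanks to \eqref{(3.2)}, while the coefficient difference multiplies $\whatx$, which is uniformly bounded by \eqref{(2.27)}; that term is therefore at most $C\|e_{ux}\|_{L^{2}(I)}\|e_{wx}\|_{L^{2}(I)}$ and is absorbable by Young's inequality. This is precisely the inequality $\int_{I}(E(\uhatx)\whatx-E(u_{hx})w_{hx})e_{wx}\geq \frac{1}{2\sqrt{1+4C_{0}^{2}}}\|e_{wx}\|_{L^{2}(I)}^{2}-C\|e_{ux}\|_{L^{2}(I)}^{2}$ quoted in the paper's proof. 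With that single change the remainder of your argument goes through.
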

\begin{proof}
The definition \eqref{(2.22)} of $\what$ and \eqref{(2.4)} yield
\begin{align*}
\int_{I} \frac{\uhatt \varphi_{h}}{Q_{h}} &+ \int_{I} E(\uhat) \whatx
 \varphi_{hx} + \frac{1}{2} \int_{I} \frac{\what^{2}}{\Qhat^{3}} \uhatx \varphi_{hx} \\
& =\int_{I} \frac{(\uhatt-u_{t}) \varphi_{h}}{Q_{h}} + \int_{I} u_{t} \left( \frac{1}{Q_{h}} -\frac{1}{Q}\right) \varphi_{h} + \frac{1}{2} \int_{I} (\what^{2}- w^{2})\frac{\uhatx}{\Qhat^{3}} \varphi_{hx} + \int_{I} f(c) \varphi_{h}
 \end{align*}
 for all $\varphi_{h} \in X_{h0}$. Subtracting \eqref{(2.10)} we obtain
 \begin{align}\label{(3.6)}
 \int_{I} \frac{e_{ut} \varphi_{h}}{Q_{h}} & + \int_{I} ( E(\uhatx)\whatx - E(u_{hx})w_{hx}) \varphi_{hx}
 + \frac{1}{2} \left( \frac{\what^{2}}{\Qhat^{3}} \uhatx - \frac{w_{h}^{2}}{Q_{h}^{3}} u_{hx}\right) \varphi_{hx}\\
& =
 -\int_{I} \frac{\rho_{ut} \varphi_{h}}{Q_{h}} + \int_{I} u_{t}\left( \frac{1}{Q_{h}} -\frac{1}{Q}\right) \varphi_{h} + \frac{1}{2} \int_{I} (\what^{2}-w^{2}) \frac{\uhatx}{\Qhat^{3}} \varphi_{hx} \notag \\
 & \qquad + 
 \int_{I} (f(c)-I_{h}(f(c_{h})) )\varphi_{h} + \mu(h) \int_{I} u_{hxt} \varphi_{hx}. \notag
 \end{align}
 After inserting $\varphi_{h}=e_{w} \in X_{h0}$ we derive
 \begin{align}\label{bla}
 \int_{I} ( E(\uhatx)\whatx - E(u_{hx})w_{hx}) e_{wx} &= - \int_{I} \frac{e_{ut} e_{w}}{Q_{h}} 
 -\frac{1}{2} \left( \frac{\what^{2}}{\Qhat^{3}} \uhatx - \frac{w_{h}^{2}}{Q_{h}^{3}} u_{hx}\right) e_{wx}
 -\int_{I} \frac{\rho_{ut} e_{w}}{Q_{h}}\\
 & \quad + \int_{I} u_{t}\left( \frac{1}{Q_{h}} -\frac{1}{Q}\right) e_{w} + \frac{1}{2} \int_{I} (\what^{2}-w^{2}) \frac{\uhatx}{\Qhat^{3}} e_{wx} \notag \\
 & \quad + 
 \int_{I} (f(c)-I_{h}(f(c_{h})) )e_{w} + \mu(h) \int_{I} u_{hxt} e_{wx}. \notag
 \end{align}
 For the last two terms we observe that
 \begin{align*}
 \left |\mu(h) \int_{I} u_{hxt} e_{wx} \right | \leq \epsilon \| e_{wx} \|_{L^{2}(I)}^{2} + C_{\epsilon} 
 \mu(h)^{2} \| u_{htx} \|_{L^{2}(I)}^{2},
 \end{align*} 
 and
 \begin{align}
 \left|\int_{I} (f(c)-I_{h}(f(c_{h})) )e_{w} \right| &\leq \left|\int_{I} (f(c)-f(c_{h}) )e_{w}\right| + \left |\int_{I} (f(c_{h})-I_{h}(f(c_{h})) )e_{w} \right | \nonumber \\
 & \leq C \|c-c_{h} \|_{L^{2}(I)}^{2} + Ch^{2} (1+ \int_{I} |c_{hx}|^{2}) + C\| e_{w} \|_{L^{2}(I)}^{2} \label{ieq:f}
 \end{align}
 where we have used \eqref{fbound} and \eqref{(4.2)}.
 From now on we argue exactly as in \cite[Lemma~3.3]{DD06}.
 The error bound relies on the fact that it can be shown that
 $$ \int_{I} ( E(\uhatx)\whatx - E(u_{hx})w_{hx}) e_{wx} \geq \frac{1}{2\sqrt{1+ 4C_{0}^{2}}} \| e_{wx} \|_{L^{2}(I)}^{2} -C \| e_{ux} \|_{L^{2}(I)}^{2}.$$
 The estimates for the remaining terms on the right-handside of \eqref{bla} are carefully explained in \cite[Lemma~3.3]{DD06}, hence we do not repeat the arguments here. 
\end{proof}

\begin{lemma} \label{lemma3.4}
For $0 \leq t < T_{h}$ we have
\begin{align*}
&\frac{\mu(h)}{2} \|e_{utx} \|_{L^{2}(I)}^{2} + \frac{1}{4C_{0}} \|e_{ut} \|_{L^{2}(I)}^{2}
+\int_{I} ( E(\uhatx)\whatx - E(u_{hx})w_{hx}) e_{utx} + \frac{1}{2}\int_{I} \left( \frac{\what^{2}}{\Qhat^{3}} \uhatx - \frac{w_{h}^{2}}{Q_{h}^{3}} u_{hx}\right) e_{utx} \\
& \leq -\frac{d}{dt} \int_{I} u_{t} \frac{u_{x}}{Q^{3}} e_{ux} \rho_{u} +\frac{1}{2} \frac{d}{dt} \int_{I} (\what^{2}-w^{2}) \frac{\uhatx}{\Qhat^{3}} e_{ux} + C \| e_{ux} \|_{L^{2}(I)}^{2} + Ch^{4} |\log h|^{4} \\
& \qquad +
C \|c-c_{h} \|_{L^{2}(I)}^{2} + C \mu(h)^{2} + C \mu(h)h^{2} +Ch^{2}(1 + \|c_{hx}\|_{L^{2}(I)}^{2}).
\end{align*} 
\end{lemma}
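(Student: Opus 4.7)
The plan is to test equation \eqref{(3.6)} with $\varphi_h = e_{ut} \in X_{h0}$. The resulting identity already produces the three structural terms of the lemma on its left-hand side, along with $\int_I e_{ut}^2/Q_h$. Using $Q_h \leq 2C_0$ from \eqref{(3.2)}, this last integral is bounded below by $\frac{1}{2C_0}\|e_{ut}\|_{L^2(I)}^2$, of which I reserve only half, keeping $\frac{1}{4C_0}\|e_{ut}\|_{L^2(I)}^2$ on the left and using the remaining half to absorb forthcoming $\epsilon$-Young residues.

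Most contributions from the right-hand side of \eqref{(3.6)} can then be handled routinely. The term $-\int \rho_{ut} e_{ut}/Q_h$ yields $Ch^4|\log h|^4 + \epsilon\|e_{ut}\|_{L^2(I)}^2$ via \eqref{(2.20)}. The coupling contribution $\int (f(c) - I_h(f(c_h))) e_{ut}$, split by inserting $\pm f(c_h)$, gives $C\|c-c_h\|_{L^2(I)}^2 + Ch^2(1+\|c_{hx}\|_{L^2(I)}^2) + \epsilon\|e_{ut}\|_{L^2(I)}^2$ using \eqref{fbound} and \eqref{(4.2)}. For the regularisation term $\mu(h)\int_I u_{hxt} e_{utx}$, I decompose $u_{hxt} = \uhattx - e_{utx} = (u_{xt} - \rho_{utx}) - e_{utx}$: the $-e_{utx}$ piece contributes $-\mu(h)\|e_{utx}\|_{L^2(I)}^2$ that will feed the left-hand side; the $u_{xt}$ piece is integrated by parts in $x$ (boundary terms vanish because $e_{ut}\in X_{h0}$), giving $-\mu(h)\int u_{xxt} e_{ut}$ which is bounded by $\epsilon\|e_{ut}\|_{L^2(I)}^2 + C\mu(h)^2$ via \eqref{(1.14)}; and the $\rho_{utx}$ piece is bounded by $\tfrac{\mu(h)}{2}\|e_{utx}\|_{L^2(I)}^2 + C\mu(h)h^2$ via \eqref{(2.21)}. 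Net, $\tfrac{\mu(h)}{2}\|e_{utx}\|_{L^2(I)}^2$ moves to the left.

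The delicate step, and the main obstacle, is producing the two $\frac{d}{dt}$-terms on the right-hand side of the statement. For $\int u_t(Q_h^{-1} - Q^{-1}) e_{ut}$, I decompose the difference as $(Q_h^{-1}-\Qhat^{-1}) + (\Qhat^{-1}-Q^{-1})$; the first piece uses $|Q_h - \Qhat| \leq |e_{ux}|$ from \eqref{(3.4)post}, together with Young and the boundedness of $u_t$, contributing $C\|e_{ux}\|_{L^2(I)}^2 + \epsilon\|e_{ut}\|_{L^2(I)}^2$. For the second piece I apply Lemma~\ref{lemma3.1} with $F(p) = (1+p^2)^{-1/2}$ and $\zeta = u_t e_{ut} \in H^1_0(I)$; expanding $\partial_x(\zeta F'(u_x))$ by the product rule produces, among smaller terms, a contribution proportional to $\int \rho_u(u_t u_x/Q^3) e_{utx}$. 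Writing $e_{utx} = (e_{ux})_t$ and using the product rule in time converts this into $\frac{d}{dt}\int \rho_u (u_t u_x/Q^3) e_{ux}$ minus $\int \partial_t(\rho_u u_t u_x/Q^3) e_{ux}$, the latter bounded by $C\|e_{ux}\|_{L^2(I)}^2 + Ch^4|\log h|^4$ using the projection bounds \eqref{(2.18)}--\eqref{(2.21)} and the regularity \eqref{(1.13)}--\eqref{(1.15)}; this reproduces the first $\frac{d}{dt}$-term (up to the sign bookkeeping of Lemma~\ref{lemma3.1}). An analogous time integration by parts on $\tfrac{1}{2}\int(\what^2-w^2)(\uhatx/\Qhat^3) e_{utx}$ produces the second $\frac{d}{dt}$-term, while the residual $\tfrac{1}{2}\int\partial_t[(\what^2-w^2)\uhatx/\Qhat^3] e_{ux}$ is bounded by $C\|e_{ux}\|_{L^2(I)}^2 + Ch^4|\log h|^4$ using $\what^2 - w^2 = -\rho_w(2w-\rho_w)$, the projection estimates \eqref{(2.24)}--\eqref{(2.26)}, and the uniform bounds \eqref{(2.27)}. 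Assembling all contributions and absorbing the $\epsilon$-Young residues into the reserved portions of the left-hand side yields the claim; the real care required is in bounding the two time-differentiated residuals cleanly enough to land only on the allowed quantities $\|e_{ux}\|_{L^2(I)}^2$, $h^4|\log h|^4$, $\|c-c_h\|_{L^2(I)}^2$, and the $\mu(h)$-scaled terms.
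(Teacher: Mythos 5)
Your proposal is correct and follows essentially the same route as the paper: testing \eqref{(3.6)} with $\varphi_h=e_{ut}$, using \eqref{(3.2)} for the coercive term and reserving half of it for the $\epsilon$-Young residues, treating the coupling term as in \eqref{ieq:f} with an $\epsilon$ weight, and handling the regularisation term via the decomposition $u_{hxt}=(u_{xt}-\rho_{utx})-e_{utx}$ with spatial integration by parts, \eqref{(2.21)} and \eqref{(1.14)} — exactly as in the paper, which simply defers the terms $I$--$IV$ to \cite[Lemma~3.4]{DD06}, whose argument (Lemma~\ref{lemma3.1} with $\zeta=u_te_{ut}$ plus integration by parts in time, and likewise for the $(\what^{2}-w^{2})$ term) you reconstruct correctly. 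The sign bookkeeping you flag indeed resolves to the stated $-\frac{d}{dt}\int_{I}u_{t}\frac{u_{x}}{Q^{3}}e_{ux}\rho_{u}$ once the Taylor expansion and spatial integration by parts underlying Lemma~\ref{lemma3.1} are carried out with $\rho_{u}=u-\uhat$.
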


\begin{proof}
Choosing $\varphi=e_{ut}\in X_{h0}$ in \eqref{(3.6)} and using \eqref{(3.2)} we obtain
\begin{align*}
\int_{I} \frac{e_{ut}^{2}}{2C_{0}} & + \int_{I} ( E(\uhatx)\whatx - E(u_{hx})w_{hx}) e_{utx}
 + \frac{1}{2} \left( \frac{\what^{2}}{\Qhat^{3}} \uhatx - \frac{w_{h}^{2}}{Q_{h}^{3}} u_{hx}\right) e_{utx}\\
& \leq
 -\int_{I} \frac{\rho_{ut} e_{ut}}{Q_{h}} + \int_{I} u_{t}\left( \frac{1}{Q_{h}} -\frac{1}{\Qhat}\right) e_{ut} + 
 \int_{I} u_{t}\left( \frac{1}{\Qhat} -\frac{1}{Q}\right) e_{ut}
+ \frac{1}{2} \int_{I} (\what^{2}-w^{2}) \frac{\uhatx}{\Qhat^{3}} \varphi_{hx} \\
 & \qquad + 
 \int_{I} (f(c)-I_{h}(f(c_{h})) )e_{ut} + \mu(h) \int_{I} u_{hxt} e_{utx} =: I +II+III+IV+V+VI. 
\end{align*}
The terms $I,II,III, IV$ are treated and estimated as in \cite[Lemma~3.4]{DD06}. Again we refrain from giving details here since the original paper gives all argument in detail. For the fifth term we proceed as in \eqref{ieq:f} but with an $\epsilon$ weight and obtain that
\[
 \left|\int_{I} (f(c)-I_{h}(f(c_{h})) )e_{ut} \right| \leq C_{\epsilon} \|c-c_{h} \|_{L^{2}(I)}^{2} + 2\epsilon \| e_{ut} \|_{L^{2}(I)}^{2} + C_{\epsilon}h^{2} (1+ \int_{I} |c_{hx}|^{2}).
\]
For the last term we compute using integration by parts (recall that $e_{ut}=0$ on $\pd I$)
\begin{align*}
VI &= \mu(h) \int_{I} u_{htx} e_{utx}= \mu(h) \left( \int_{I} (u_{htx} -\uhattx) e_{utx} + \int_{I} (\uhattx -u_{tx}) e_{utx} + \int_{I} u_{tx} e_{utx}\right)\\
&= -\mu(h) \|e_{utx}\|_{L^{2}(I)}^{2} - \mu(h)\int_{I} \rho_{utx} e_{utx} - \mu(h)\int_{I} u_{txx} e_{ut}\\
& \leq -\mu(h) \|e_{utx}\|_{L^{2}(I)}^{2} + \frac{\mu(h)}{2} \|e_{utx}\|_{L^{2}(I)}^{2} + \frac{\mu(h)}{2} Ch^{2} + \epsilon \|e_{ut}\|_{L^{2}(I)}^{2} + C_{\epsilon} \mu(h)^{2},
\end{align*}
where we have used \eqref{(2.21)} and \eqref{(1.14)}. An appropriate choice of $\epsilon$ together with the estimates for the terms $I$--$VI$ gives the claim.
\end{proof}

\begin{lemma}\label{lemma3.5}
For $0 \leq t < T_{h}$ we have
\begin{multline*}
\frac{1}{2} \frac{d}{dt} \int_{I} \frac{e_{w}^{2}}{Q_{h}} -\frac{1}{2} \int_{I} \frac{e_{w}^{2}}{Q_{h}^{2}}Q_{ht}
-\int_{I} \what \left( \frac{\Qhatt}{\Qhat^{2}} - \frac{Q_{ht}}{Q_{h}^{2}}\right) e_{w}
-\int_{I} ( E(\uhatx)\uhattx - E(u_{hx})u_{htx}) e_{wx}\\
\leq \epsilon \|e_{wx}\|_{L^{2}(I)}^{2} + C_{\epsilon} ( \|e_{ux}\|_{L^{2}(I)}^{2}+ \|e_{w}\|_{L^{2}(I)}^{2})
+ C_{\epsilon} h^{4}|\log h|^{4}.
\end{multline*}
\end{lemma}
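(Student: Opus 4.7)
The crucial observation is that the scheme equation \eqref{(2.11)} for $w_h$ is unchanged by the coupling to $c$ and by the regularisation: neither $f(c_h)$ nor $\mu(h) u_{hxt}\varphi_{hx}$ enters it. Equation \eqref{(2.11)} coincides with the analogous equation in \cite{DD06}, and Lemma~\ref{lemma3.5} is the exact counterpart of \cite[Lemma~3.5]{DD06}. The plan is therefore to rederive the latter in our notation.

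The key identity is obtained by comparing two time-differentiated equations tested with $e_w \in X_{h0}$. First, differentiating \eqref{(2.11)} in time with a fixed $\psi_h = e_w$ and using $Q_{ht}= u_{hx} u_{hxt}/Q_h$ to simplify the right-hand side yields
\[
 \int_I \frac{w_{ht} e_w}{Q_h}\,dx - \int_I \frac{w_h Q_{ht}}{Q_h^2} e_w\,dx \,=\, \int_I E(u_{hx}) u_{hxt}\, e_{wx}\,dx.
\]
Second, combining \eqref{def-uhat} with \eqref{(2.5)} gives $\int_I \uhatx \xi_{hx}/\Qhat\,dx = \int_I w\, \xi_h/Q\,dx$ for all $\xi_h \in X_{h0}$; differentiating this in time and setting $\xi_h = e_w$ produces
\[
 \int_I \frac{w_t e_w}{Q}\,dx - \int_I \frac{w Q_t}{Q^2} e_w\,dx \,=\, \int_I E(\uhatx) \uhattx\, e_{wx}\,dx.
\]
Subtracting the two equations, writing $w_t - w_{ht} = \rho_{wt} + e_{wt}$, and splitting
\[
 \frac{w Q_t}{Q^2} - \frac{w_h Q_{ht}}{Q_h^2}
 \,=\, \left(\frac{w Q_t}{Q^2} - \frac{\what\, \Qhatt}{\Qhat^2}\right)
 + \what\left(\frac{\Qhatt}{\Qhat^2} - \frac{Q_{ht}}{Q_h^2}\right)
 + \frac{e_w\, Q_{ht}}{Q_h^2},
\]
then using the identity $\int_I e_w e_{wt}/Q_h\,dx = \tfrac12 \tfrac{d}{dt}\int_I e_w^2/Q_h\,dx + \tfrac12 \int_I e_w^2 Q_{ht}/Q_h^2\,dx$, reproduces precisely the four terms on the left-hand side of the lemma. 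The remaining terms, after rearrangement, are
\[
 \int_I \frac{\rho_{wt} e_w}{Q_h}\,dx \;+\; \int_I w_t\!\left(\frac{1}{Q_h}-\frac{1}{Q}\right) e_w\,dx \;-\; \int_I\!\left(\frac{w Q_t}{Q^2} - \frac{\what\, \Qhatt}{\Qhat^2}\right) e_w\,dx.
\]

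Bounding these three remainder integrals is routine, with one tricky point. The first term is absorbed by $\epsilon\|e_w\|_{L^2}^2$ plus $C\|\rho_{wt}\|_{L^2}^2 \leq C h^4|\log h|^4$ by \eqref{(2.26)}. The second uses $|1/Q_h - 1/Q| \leq C(|\rho_{ux}|+|e_{ux}|)$ (cf.\ \eqref{pippoQ}) and the $L^\infty$-bound on $u_t$ from \eqref{(1.14)}, together with \eqref{(2.19)}, giving contributions absorbed in $\epsilon\|e_w\|_{L^2}^2+C(\|e_{ux}\|_{L^2}^2+h^4|\log h|^2)$. For the third, write
\[
 \frac{w Q_t}{Q^2} - \frac{\what \Qhatt}{\Qhat^2}
 \,=\, \frac{w\, u_x u_{xt}}{Q^3} - \frac{\what\, \uhatx \uhattx}{\Qhat^3}
\]
and add and subtract intermediate expressions so that the difference splits into one part multiplied by $\rho_w$ (easy), one part multiplied by $\rho_{uxt}$, and one part multiplied by $G(u_x)-G(\uhatx)$ with $G(p)=p(1+p^2)^{-3/2}$.

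\textbf{The main obstacle} is the $\rho_{uxt}$-term, since \eqref{(2.21)} only gives $\|\rho_{uxt}\|_{L^2}\leq Ch$ and a direct estimate would produce the insufficient $O(h^2)$. This is resolved by integration by parts in $x$: since $e_w$ vanishes on $\partial I$, we move the $x$-derivative off $\rho_{ut}$ and distribute it on $\what$, $u_x/Q^3$ and $e_w$, producing terms controlled either by $C\|\rho_{ut}\|_{L^2}^2 \leq Ch^4|\log h|^4$ via \eqref{(2.20)} (using the $L^\infty$ bounds \eqref{(2.27)} on $\what,\whatx$ and the smoothness of $u$) or absorbed into $\epsilon\|e_{wx}\|_{L^2}^2$. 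The remaining $G(u_x)-G(\uhatx)$-term is handled by Lemma~\ref{lemma3.1} with $F=G$ and $\zeta = \what\, \uhattx\, e_w$, yielding again the $h^4|\log h|^4$ rate after using $\|\rho_u\|_{L^2}\leq Ch^2$ and carefully splitting the derivative on each subinterval $S_j$ (so that the non-existent $(\uhattx)_x$ is avoided). All remaining lower-order pieces can be hidden in $C_\epsilon(\|e_{ux}\|_{L^2}^2+\|e_w\|_{L^2}^2)+\epsilon\|e_{wx}\|_{L^2}^2+C_\epsilon h^4|\log h|^4$, completing the estimate. Since the technical manipulations are identical to those in \cite[Lemma~3.5]{DD06}, we omit further details.
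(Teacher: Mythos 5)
Your overall route is the same as the paper's: \eqref{(2.11)} is untouched by the coupling and by the penalty term, so one differentiates in time the identity $\int_I w\,\xi_h/Q\,dx=\int_I \uhatx\,\xi_{hx}/\Qhat\,dx$ (from \eqref{(2.5)} and \eqref{def-uhat}) and the discrete relation \eqref{(2.11)}, subtracts, tests with $\xi_h=e_w$, and then estimates the remainder exactly as in \cite[Lemma~3.5]{DD06}. Your algebraic decomposition reproduces the four terms on the left-hand side correctly (the signs of your listed remainder integrals are off, but that is immaterial since they are bounded in absolute value), and your treatment of the $\rho_{wt}$-term via \eqref{(2.26)}, of the $\rho_{utx}$-obstacle by integration by parts onto $\rho_{ut}$ and \eqref{(2.20)}, and of the $G(u_x)-G(\uhatx)$-term via Lemma~\ref{lemma3.1} is the right mechanism.

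There is, however, one step that does not deliver what you claim: the term $\int_I w_t\bigl(\tfrac{1}{Q_h}-\tfrac1Q\bigr)e_w\,dx$. Bounding $|\tfrac1{Q_h}-\tfrac1Q|\le C(|\rho_{ux}|+|e_{ux}|)$ and invoking \eqref{(2.19)} or \eqref{(2.18)} gives at best $C\|e_w\|_{L^2(I)}\bigl(\|e_{ux}\|_{L^2(I)}+h\bigr)$, hence after Young an extra $Ch^2$ (or $Ch^2|\log h|^2$); since $\|\rho_{ux}\|_{L^2(I)}\le Ch$ and $\|\rho_{ux}\|_{L^\infty(I)}\le Ch|\log h|$ are sharp, the $C h^4|\log h|^2$ you assert cannot be reached this way, and an $O(h^2)$ leftover is not admissible in the lemma as stated, which allows only $C_\epsilon h^4|\log h|^4$ besides the $e$-terms. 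The repair is the same device you already use for the third remainder term: split $\tfrac1{Q_h}-\tfrac1Q=\bigl(\tfrac1{Q_h}-\tfrac1{\Qhat}\bigr)+\bigl(\tfrac1{\Qhat}-\tfrac1Q\bigr)$; the first difference is $\le C|e_{ux}|$ and harmless, while the second is $F(\uhatx)-F(u_x)$ with $F(p)=(1+p^2)^{-1/2}$ and is handled by Lemma~\ref{lemma3.1} with $\zeta=w_t e_w\in H^1_0(I)$, producing $\int_I\rho_u\,\partial_x\bigl(\zeta F'(u_x)\bigr)dx$ plus a remainder bounded by $Ch^2|\log h|\,\|\zeta\|_{L^2(I)}$; with $\|\rho_u\|_{L^2(I)}\le Ch^2$ these pieces are absorbed into $\epsilon\|e_{wx}\|_{L^2(I)}^2+C_\epsilon\|e_w\|_{L^2(I)}^2+C_\epsilon h^4|\log h|^2$. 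For the same reason, in the third term you should take $\zeta=\what\,u_{xt}\,e_w$ rather than $\what\,\uhattx\,e_w$: the latter is not in $H^1_0(I)$ (so Lemma~\ref{lemma3.1} does not apply to it, and elementwise manipulation is not needed), while the mismatch $u_{xt}-\uhattx=\rho_{utx}$ is precisely the piece you already treat by integration by parts.
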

\begin{proof} See \cite[Lemma~3.5]{DD06}: the main idea is to take
equation \eqref{(2.5)} together with the definition of $\uhat$ (recall \eqref{def-uhat}) to infer
$$ \int_{I} \frac{w \xi_{h}}{Q}= \int_{I} \frac{u_{x} \xi_{hx}}{Q}= \int_{I} \frac{\uhatx \xi_{hx}}{\Qhat} \,\qquad \forall \xi_{h } \in X_{h0},$$
and from which differentiation with respect to time gives
\begin{align}\label{eq-uno}
 \int_{I} \frac{w_{t} \xi_{h}}{Q} - \int_{I} \frac{w \xi_{h}}{Q^{2}}Q_{t} - \int_{I} E(\uhatx) \uhattx \xi_{hx}=0 \,\qquad \forall \xi_{h } \in X_{h0}.
 \end{align}
Differentiation with respect to time of \eqref{(2.11)} gives 
\begin{align}\label{eq-due}
 \int_{I} \frac{w_{ht} \xi_{h}}{Q_{h}} - \int_{I} \frac{w_{h} \xi_{h}}{Q_{h}^{2}}Q_{ht} - \int_{I} E(u_{hx}) u_{htx} \xi_{hx}=0 \,\qquad \forall \xi_{h } \in X_{h0}.
 \end{align}
The claim now follows by taking the difference of \eqref{eq-uno}, \eqref{eq-due}, and testing with $\xi_{h}=e_{w}$.
\end{proof}

It follows now from Lemma~\ref{lemma3.4} and Lemma~\ref{lemma3.5} that
\begin{align}
\frac{\mu(h)}{2} & \|e_{utx} \|_{L^{2}(I)}^{2} + \frac{1}{4C_{0}} \|e_{ut} \|_{L^{2}(I)}^{2} + \frac{1}{2} \frac{d}{dt} \int_{I} \frac{e_{w}^{2}}{Q_{h}} \notag \\
& + \frac{1}{2}\int_{I} \left( \frac{\what^{2}}{\Qhat^{3}} \uhatx - \frac{w_{h}^{2}}{Q_{h}^{3}} u_{hx}\right) e_{utx}
-\frac{1}{2} \int_{I} \frac{e_{w}^{2}}{Q_{h}^{2}}Q_{ht}
-\int_{I} \what \left( \frac{\Qhatt}{\Qhat^{2}} - \frac{Q_{ht}}{Q_{h}^{2}}\right) e_{w} \notag \\
& + \int_{I} ( E(\uhatx)\whatx - E(u_{hx})w_{hx}) e_{utx} -\int_{I} ( E(\uhatx)\uhattx - E(u_{hx})u_{htx}) e_{wx}
 \notag \\
\leq & -\frac{d}{dt} \int_{I} u_{t} \frac{u_{x}}{Q^{3}} e_{ux} \rho_{u} +\frac{1}{2} \frac{d}{dt} \int_{I} (\what^{2}-w^{2}) \frac{\uhatx}{\Qhat^{3}} e_{ux} + C_{\epsilon }h^{4} |\log h|^{4} \notag \\
& + \epsilon \|e_{wx}\|_{L^{2}(I)}^{2} + C_{\epsilon} ( \|e_{ux}\|_{L^{2}(I)}^{2}+ \|e_{w}\|_{L^{2}(I)}^{2}) \notag \\
& + C \|c-c_{h} \|_{L^{2}(I)}^{2} + C \mu(h)^{2} + C \mu(h)h^{2} +Ch^{2}(1 + \|c_{hx}\|_{L^{2}(I)}^{2}). \label{(3.13)}
\end{align}
The terms appearing is the second and third line are dealt with as in \cite[p34--37]{DD06} where the lengthy calculations are presented in detail. We thus only list the relevant results. Precisely one finds that (see \cite[(3.15), (3.18), (3.19)]{DD06})
\begin{align}\label{(3.15)}
&\frac{1}{2}\int_{I} \left( \frac{\what^{2}}{\Qhat^{3}} \uhatx - \frac{w_{h}^{2}}{Q_{h}^{3}} u_{hx}\right) e_{utx}
-\frac{1}{2} \int_{I} \frac{e_{w}^{2}}{Q_{h}^{2}}Q_{ht}
-\int_{I} \what \left( \frac{\Qhatt}{\Qhat^{2}} - \frac{Q_{ht}}{Q_{h}^{2}}\right) e_{w} \\
& \qquad \geq \frac{1}{2} \frac{d}{dt} \int_{I} \what^{2} \left \{ \frac{1}{2} \frac{Q_{h}}{\Qhat^{2}} |\hat{\nu}_{h} -\nu_{h}|^{2} - \frac{1}{Q_{h}\Qhat^{2}}(\Qhat -Q_{h})^{2}\right \} - C (\| e_{w} \|_{L^{2}(I)}^{2} + \| e_{ux} \|_{L^{2}(I)}^{2} ),\notag\end{align}
as well as
\begin{align}\label{(3.18)}
&\int_{I} (E(\uhatx) -E(u_{hx}))(\uhattx -u_{htx}) \whatx \\
& \qquad \geq \frac{d}{dt} \int_{I} \left( \left(\frac{Q_{h}}{\Qhat} -1\right)(\hat{\nu}_{h} -\nu_{h})
-\frac{1}{2} \frac{Q_{h}}{\Qhat} |\hat{\nu}_{h} -\nu_{h}|^{2} \hat{\nu}_{h}\right) \cdot (\whatx, 0)^{t} - C \|e_{ux}\|_{L^{2}(I)}^{2} , \notag
\end{align}
and
\begin{align}\label{(3.19)}
 \left| \int_{I} (E(\uhatx)-E(u_{hx}))\uhattx e_{wx}\right| \leq \epsilon \|e_{wx}\|_{L^{2}(I)}^{2} + C_{\epsilon} \|e_{ux}\|_{L^{2}(I)}^{2}. 
\end{align}
Observing that
\begin{multline*}
 (E(\uhatx)\whatx - E(u_{hx})w_{hx}) e_{utx} - ( E(\uhatx)\uhattx - E(u_{hx})u_{htx}) e_{wx}\\=
(E(\uhatx) -E(u_{hx}))(\uhattx -u_{htx}) \whatx - (E(\uhatx)-E(u_{hx}))\uhattx e_{wx},
\end{multline*}
and inserting \eqref{(3.15)}, \eqref{(3.18)}, \eqref{(3.19)} into \eqref{(3.13)} we obtain
\begin{align*}
\frac{\mu(h)}{2} & \|e_{utx} \|_{L^{2}(I)}^{2} + \frac{1}{4C_{0}} \|e_{ut} \|_{L^{2}(I)}^{2} + \frac{1}{2} \frac{d}{dt} \int_{I} \frac{e_{w}^{2}}{Q_{h}} \\
\leq & -\frac{d}{dt} \int_{I} u_{t} \frac{u_{x}}{Q^{3}} e_{ux} \rho_{u} +\frac{1}{2} \frac{d}{dt} \int_{I} (\what^{2}-w^{2}) \frac{\uhatx}{\Qhat^{3}} e_{ux} \\
& -\frac{1}{2} \frac{d}{dt} \int_{I} \what^{2} \left \{ \frac{1}{2} \frac{Q_{h}}{\Qhat^{2}} |\hat{\nu}_{h} -\nu_{h}|^{2} - \frac{1}{Q_{h}\Qhat^{2}}(\Qhat -Q_{h})^{2}\right \}\\
& - \frac{d}{dt} \int_{I} \left( \left(\frac{Q_{h}}{\Qhat} -1\right)(\hat{\nu}_{h} -\nu_{h})
-\frac{1}{2} \frac{Q_{h}}{\Qhat} |\hat{\nu}_{h} -\nu_{h}|^{2} \hat{\nu}_{h}\right) \cdot (\whatx, 0)^{t} 
 \\
& + C_{\epsilon }h^{4} |\log h|^{4} 
 +\epsilon \|e_{wx}\|_{L^{2}(I)}^{2} 
+ C_{\epsilon} ( \|e_{ux}\|_{L^{2}(I)}^{2}+ \|e_{w}\|_{L^{2}(I)}^{2}) \notag \\
& + C \|c-c_{h} \|_{L^{2}(I)}^{2} + C \mu(h)^{2} + C \mu(h)h^{2} +Ch^{2}(1 + \|c_{hx}\|_{L^{2}(I)}^{2}). 
\end{align*}
Integration with respect to time for some $\bar{t} \in (0,T_{h})$, application of Lemma~\ref{lemma3.2} and Lemma~\ref{lemma3.3}, \eqref{(2.27)}, \eqref{(3.4)}, \eqref{(3.2)}, \eqref{(3.2)bis}, \eqref{(2.18)}, and using the approximation order of the initial data (recall Lemma~\ref{indata}) yields
\begin{align}
\int_{0}^{\bar{t}} \mu(h) & \|e_{utx} \|_{L^{2}(I)}^{2} dt + \int_{0}^{\bar{t}} \|e_{ut} \|_{L^{2}(I)}^{2} dt + \| e_{w}(\bar{t}) \|_{L^{2}(I)}^{2} \nonumber \\
& \leq C \| e_{w}(0) \|_{L^{2}(I)}^{2}+C \| e_{ux}(\bar{t}) \|_{L^{2}(I)} ( \| \rho_{u}(\bar{t}) \|_{L^{2}(I)} + \| e_{w}(\bar{t}) \|_{L^{2}(I)} + \| e_{ux}(\bar{t}) \|_{L^{2}(I)} ) \nonumber \\
& \qquad + C \| e_{ux}(0) \|_{L^{2}(I)} ( \| \rho_{u}(0) \|_{L^{2}(I)} + \| e_{w}(0) \|_{L^{2}(I)} + \| e_{ux}(0) \|_{L^{2}(I)} )  \displaybreak[0] \nonumber \\
& \qquad + C_{\epsilon }h^{4} |\log h|^{4} +\epsilon \int_{0}^{\bar{t}} \|e_{wx}\|_{L^{2}(I)}^{2} dt \notag \displaybreak[0] \\
& \qquad + C_{\epsilon} \int_{0}^{\bar{t}} ( \|e_{ux}\|_{L^{2}(I)}^{2}+ \|e_{w}\|_{L^{2}(I)}^{2}) dt + C \int_{0}^{\bar{t}} \|c-c_{h} \|_{L^{2}(I)}^{2} dt \nonumber \\
& \qquad + C \mu(h)^{2} + C \mu(h)h^{2} +Ch^{2}(1 + \int_{0}^{\bar{t}} \|c_{hx}\|_{L^{2}(I)}^{2} dt ) \displaybreak[0] \nonumber \\
& \leq \epsilon \left ( \|e_{w}(\bar{t}) \|_{L^{2}(I)}^{2} +\int_{0}^{\bar{t}} \|e_{ut} \|_{L^{2}(I)}^{2} dt \right) + \epsilon C \mu(h)^{2} \int_{0}^{\bar{t}} \| u_{hxt}\|_{L^{2}(I)}^{2} dt \displaybreak[0] \nonumber \\
& \qquad + C_{\epsilon} \left( \|e_{u}(\bar{t}) \|_{L^{2}(I)}^{2} + h^{4}|\log h|^{4} + \int_{0}^{\bar{t}} 
(\|e_{u} \|_{L^{2}(I)}^{2}+ \|e_{w} \|_{L^{2}(I)}^{2}) dt \right) \nonumber \\
&\qquad + C \int_{0}^{\bar{t}} \|c-c_{h} \|_{L^{2}(I)}^{2} dt + C \mu(h)^{2} + C \mu(h)h^{2} +Ch^{2}. \label{ieq:hilf}
\end{align}
Thanks to \eqref{(2.27)} and as $\mu(h) \leq 1$ for all $h \leq h_{0}$ with some sufficiently small $h_{0}$ we have that
\begin{multline}\label{stern}
\epsilon C\mu(h)^{2} \int_{0}^{\bar{t}} \| u_{hxt}\|_{L^{2}(I)}^{2} dt \\
\leq \epsilon C\mu(h)^{2} \int_{0}^{\bar{t}} (\| e_{utx}\|_{L^{2}(I)}^{2} + C ) dt \leq \epsilon C \mu(h) \int_{0}^{\bar{t}} \| e_{utx}\|_{L^{2}(I)}^{2} dt + \epsilon C \mu(h)^{2}.
\end{multline}
Moreover, using that $e_{u}(0)=0$ we obtain that
\[ 
\|e_{u}(\bar{t}) \|_{L^{2}(I)}^{2} = \int_{0}^{\bar{t}} \frac{d}{dt} \big{\|} e_{u}(t) \big{\|}_{L^{2}(I)}^{2} dt = \int_{0}^{\bar{t}} \int_I 2 e_{u} e_{ut} dx dt \leq \epsilon \int_{0}^{\bar{t}} \|e_{ut} \|_{L^{2}(I)}^{2} dt + C_{\epsilon} \int_{0}^{\bar{t}} \|e_{u} \|_{L^{2}(I)}^{2} dt.
\] 
Using this and \eqref{stern} with $\epsilon$ small enough in \eqref{ieq:hilf} yields
\begin{multline}
\int_{0}^{\bar{t}} \mu(h) \|e_{utx} \|_{L^{2}(I)}^{2} dt + \int_{0}^{\bar{t}} \|e_{ut} \|_{L^{2}(I)}^{2} dt + \| e_{w}(\bar{t}) \|_{L^{2}(I)}^{2} + \|e_{u}(\bar{t}) \|_{L^{2}(I)}^{2} \\
\leq C\int_{0}^{\bar{t}} (\|e_{u} \|_{L^{2}(I)}^{2}+ \|e_{w} \|_{L^{2}(I)}^{2}) dt + C \int_{0}^{\bar{t}} \|c-c_{h} \|_{L^{2}(I)}^{2} dt + C \mu(h)^{2} + C \mu(h)h^{2} +Ch^{2}. \label{ieq:mu_critical}
\end{multline}
A Gronwall argument and using that $\mu(h) \leq C h$ for all $h \leq h_{0}$ (after eventually reducing $h_{0}$) finally yields
\begin{align} \label{(3.21)}
\| e_{w}(\bar{t}) \|_{L^{2}(I)}^{2} + \|e_{u}(\bar{t}) \|_{L^{2}(I)}^{2} \leq C \int_{0}^{\bar{t}} \|c-c_{h} \|_{L^{2}(I)}^{2} dt +Ch^{2}, \qquad \bar{t}\in [0, T_{h}),
\end{align}
from which also conclude that
\begin{align}\label{(3.21)bis}
 \mu(h) \int_{0}^{\bar{t}} \|e_{utx} \|_{L^{2}(I)}^{2} dt + \int_{0}^{\bar{t}} \|e_{ut} \|_{L^{2}(I)}^{2} dt \leq
C \int_{0}^{\bar{t}} \|c-c_{h} \|_{L^{2}(I)}^{2} dt +Ch^{2}, \qquad \bar{t}\in [0, T_{h}).
\end{align}

\subsection{Error estimate for $(c-c_{h})$}

In order to proceed we need to analyse the error between $c$ and $c_{h}$. We here basically follow the lines of \cite{PS17}, Lemma 4.2. But we need to provide all details as the treatment of the terms with the time derivative of the length element is different here.

\begin{lemma}\label{lemma:c-ch}
We have that for any ${\intertime} \in [0, T_{h})$
\begin{align*}
\| c({\intertime}) - & c_{h}({\intertime}) \|_{L^{2}(I)}^{2} + \int_0^{\intertime} \| c_{x} - c_{hx} \|_{L^{2}(I)}^{2} dt \nonumber \\
& \leq C \|e_{ux} ({\intertime})\|_{L^{2}(I)}^{2} + C \int_{0}^{{\intertime}} \| c - c_{h} \|_{L^{2}(I)}^{2} dt 
+ C \int_{0}^{{\intertime}} \|e_{ux}\|_{L^{2}(I)}^{2} dt + C \int_{0}^{{\intertime}} \|e_{ut}\|_{L^{2}(I)}^{2} dt \\
& \quad + C \int_{0}^{{\intertime}} \|e_{ux}\|_{L^{2}(I)}^{2} \|e_{utx}\|_{L^{2}(I) }^{2} dt
+ C h^{2} \int_{0}^{{\intertime}} \|e_{utx}\|_{L^{2}(I)}^{2} dt + C h^{2}.
\end{align*}
\end{lemma}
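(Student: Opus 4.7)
The plan is to derive a differential inequality for $\eta_h := I_h c - c_h \in X_{h0}$, where $\rho_c := c - I_h c$ so that $c - c_h = \rho_c + \eta_h$ and $\eta_h(0) = 0$ by the choice of initial data in \eqref{icondh}. I would test the long forms \eqref{c-gleichung-long} and \eqref{ch-gleichung-long} with $\xi = \zeta_h = \eta_h(t)$, subtract, and apply the identity $\int_I Q_h \eta_h \eta_{ht}\,dx = \tfrac{1}{2}\tfrac{d}{dt}\int_I Q_h \eta_h^2\,dx - \tfrac{1}{2}\int_I Q_{ht} \eta_h^2\,dx$ together with the decomposition $c_{ht} = I_h(c_t) - \eta_{ht}$ to extract the coercive time derivative. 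After splitting the diffusion term $\int_I (c_x/Q - c_{hx}/Q_h)\eta_{hx}\,dx$ into its coercive piece $\int_I \eta_{hx}^2/Q_h\,dx$ plus remainders, one arrives at an identity of the form
\begin{equation*}
\frac{1}{2}\frac{d}{dt}\int_I Q_h \eta_h^2\,dx + \int_I \frac{\eta_{hx}^2}{Q_h}\,dx = \mathcal{R}(t),
\end{equation*}
where $\mathcal{R}(t)$ collects source terms involving $\rho_c$, $\rho_{cx}$, $\rho_{ct}$, $Q-Q_h$, $Q_t$, $Q_{ht}$, and crucially $c(Q-Q_h)_t$.

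The routine source terms I would bound via Cauchy--Schwarz and $\epsilon$-Young inequalities. The $\rho_c$- and $\rho_{cx}$-contributions produce $Ch^2$ via \eqref{(4.2)}, \eqref{(4.2)bis}, and \eqref{cond-c}. Terms containing $Q-Q_h$ yield the $C\int\|e_{ux}\|_{L^2}^2\,dt$ piece through \eqref{pippoQ}. The term $\int_I \rho_c Q_{ht}\eta_h$ is estimated using $\|\rho_c\|_{L^\infty}\leq Ch$ (from $c \in L^\infty(0,T;H^2) \hookrightarrow L^\infty(0,T;W^{1,\infty})$ in 1D) together with $\|Q_{ht}\|_{L^2}\leq \|u_{hxt}\|_{L^2}\leq C + \|e_{utx}\|_{L^2}$ (via $u_{hxt} = \uhattx - e_{utx}$ and \eqref{(2.27)}), producing precisely the $Ch^2\int\|e_{utx}\|^2\,dt$ contribution after Young's inequality. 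The $\int_I Q_{ht}\eta_h^2$ term is handled similarly using the embedding $\|\eta_h\|_{L^\infty} \leq C\|\eta_{hx}\|_{L^2}$.

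The main obstacle, as flagged in Remark~\ref{regularization}, is the treatment of $\int_I c(Q-Q_h)_t \eta_h$. I would expand $(Q-Q_h)_t = u_{xt}\big(\tfrac{u_x}{Q}-\tfrac{u_{hx}}{Q_h}\big) + \tfrac{u_{hx}}{Q_h}(\rho_{utx} + e_{utx})$ and bound the first summand together with the $\rho_{utx}$-piece by $\epsilon\|\eta_h\|^2 + C(h^2 + \|e_{ux}\|^2)$ using \eqref{pipponu} and \eqref{(2.21)}. The truly problematic piece $-\int_I c (u_{hx}/Q_h) e_{utx} \eta_h$ is rewritten using $e_{utx} = \partial_t e_{ux}$ and integrated by parts in time over $[0,\bar{t}]$; since $e_{ux}(0)=0$ by \eqref{icondh}, only the boundary contribution at $\bar{t}$ survives, yielding $\epsilon\|\eta_h(\bar{t})\|^2 + C\|e_{ux}(\bar{t})\|^2$. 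The remaining volume integral $\int_0^{\bar{t}}\!\int_I (c (u_{hx}/Q_h) \eta_h)_t\, e_{ux}\,dx\,dt$ is expanded by the product rule: the piece with $(u_{hx}/Q_h)_t = u_{hxt}/Q_h^3$, using $\|\eta_h\|_{L^\infty}\leq C\|\eta_{hx}\|_{L^2}$ and $\|u_{hxt}\|_{L^2}\leq C + \|e_{utx}\|_{L^2}$, produces exactly the $C\int \|e_{ux}\|^2 \|e_{utx}\|^2\,dt$ contribution via Young, while the $c_t(u_{hx}/Q_h)\eta_h e_{ux}$ piece gives the $C\int\|e_{ut}\|^2\,dt$ term and an extra $C\int\|e_{ux}\|^2\,dt$.

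Finally I would integrate the identity over $[0,\bar{t}]$, use $\eta_h(0)=0$ so that the initial contribution vanishes, absorb the $\epsilon$-terms into the coercive left-hand side pieces $\int_I Q_h\eta_h^2 \geq \|\eta_h\|^2$ and $\int_I \eta_{hx}^2/Q_h \geq \tfrac{1}{C}\|\eta_{hx}\|^2$, and translate back to $c-c_h$ via $\|c-c_h\|^2 \leq 2\|\eta_h\|^2 + 2\|\rho_c\|^2 \leq 2\|\eta_h\|^2 + Ch^4$ (and similarly for the gradient). All remaining $\|\eta_h\|^2$ contributions on the right are collected into the Gronwall term $C\int_0^{\bar{t}}\|c-c_h\|^2\,dt$, yielding the claimed inequality.
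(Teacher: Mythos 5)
Your overall setup (splitting $c-c_h=\rho_c+\eta_h$ with $\eta_h=I_hc-c_h$, testing with $\eta_h$, and extracting the coercive terms) is a legitimate variant of the paper's argument, and your treatment of the routine terms and of the boundary term at $\bar t$ (which correctly produces the $C\|e_{ux}(\bar t)\|_{L^2}^2$ contribution) is fine. However, your handling of the critical term $-\int_I c\,(u_{hx}/Q_h)\,e_{utx}\,\eta_h\,dx$ has a genuine gap. After integrating by parts in time you must expand $\partial_t\bigl(c\,(u_{hx}/Q_h)\,\eta_h\bigr)$ by the product rule, and this produces \emph{three} pieces, not the two you list: besides the $c_t$-piece and the $(u_{hx}/Q_h)_t$-piece there is the piece $\int_0^{\bar t}\!\int_I c\,(u_{hx}/Q_h)\,\eta_{ht}\,e_{ux}\,dx\,dt$ containing $\eta_{ht}=I_h(c_t)-c_{ht}$. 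The part with $c_{ht}$ is not controlled by anything available at this stage: the only information on $c_{ht}$ comes from the weak equation \eqref{ch-gleichung}, which gives no a priori $L^2$-in-time bound on $c_{ht}$ (testing with $c_{ht}$ would require pointwise-in-time control of $\|c_{hx}\|_{L^2}$ and again of $u_{hxt}$), and no such quantity appears on the admissible right-hand side of the lemma. This is precisely the difficulty the paper circumvents by \emph{not} integrating by parts in time: in the term $K_{1,2}$ it splits off the dangerous factor as $\int_I c(c-c_h)\frac{u_x}{Q}(\uhattx-u_{htx})$ and integrates by parts in \emph{space}, converting it into $-\int_I \partial_x\bigl(c(c-c_h)\frac{u_x}{Q}\bigr)(\uhatt-u_{ht})$, which yields the $\epsilon\|(c-c_h)_x\|^2+C_\epsilon\|e_{ut}\|^2$ terms, while the remaining $e_{utx}$-contribution is paired with the small factor $\frac{u_{hx}}{Q_h}-\frac{u_x}{Q}$ and estimated by $\|c-c_h\|_{L^\infty}\|\nu-\nu_h\|_{L^2}\|e_{utx}\|_{L^2}$, which is the true source of the $\|e_{ux}\|^2\|e_{utx}\|^2$ and $h^2\|e_{utx}\|^2$ terms. (Incidentally, this also explains why $\int\|e_{ut}\|^2$ appears in the statement; your attribution of it to the $c_t$-piece is not correct, though that by itself would be harmless.)

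A second, lesser issue is your treatment of the term $\int_I Q_{ht}\,\eta_h^2\,dx$ ``similarly using $\|\eta_h\|_{L^\infty}\le C\|\eta_{hx}\|_{L^2}$'': with $\|Q_{ht}\|_{L^2}\le C+\|e_{utx}\|_{L^2}$ this yields, after Young, either a term $C\int\|e_{utx}\|^2\|c-c_h\|^2\,dt$ or an unweighted $C\int\|e_{utx}\|^2\,dt$, neither of which occurs in (or is dominated by) the claimed right-hand side; note that only $h^2\int\|e_{utx}\|^2$ is allowed, and an unweighted $\int\|e_{utx}\|^2$ would scale like $h^{2-r}/$const and destroy the final rate. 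The paper avoids this by splitting $Q_{ht}=Q_t+(Q_{ht}-Q_t)$, bounding the smooth part by $C\|c-c_h\|^2$, and treating $(Q_{ht}-Q_t)$ exactly as in $K_{1,2}$ (small factor times $e_{utx}$ plus a spatial integration by parts producing $e_{ut}$). Both gaps are repairable by adopting the paper's space-integration-by-parts device, but as written your time-integration-by-parts route does not close.
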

\begin{proof}
The difference between the continuous \eqref{c-gleichung-long} and the discrete version \eqref{ch-gleichung-long} reads
\[
\int_{I} (c Q - c_{h} Q_{h})_{t} \zeta_{h} \, dx + \int_{I} \Big{(} \frac{c_{x}}{Q} - \frac{c_{hx}}{Q_{h}} \Big{)} \zeta_{hx} \, dx = 0
\]
for all test functions $\zeta_{h}(x,t)$ of the form $\zeta_{h} = \sum_{j=1}^{N-1} \zeta_j(t) \varphi_j(x)$. Choosing 
\[
 \zeta_{h} = I_{h}(c) - c_{h} = c - c_{h} + I_{h}(c) - c
\]
a calculation (cf. \cite[Lemma~4.2]{PS17}) yields that
\begin{align}
\frac{d}{dt} \Big{(} \int_{I} & \frac{1}{2} (c - c_{h})^{2} Q_{h} \, dx \Big{)} + \int_{I} \frac{|(c - c_{h})_{x}|^{2}}{Q_{h}} \, dx \nonumber \\
&= \int_{I} \big{(} c (Q_{h} - Q) \big{)}_{t} (c - c_{h}) \, dx - \int_{I} \frac{1}{2} (c - c_{h})^{2} Q_{ht} \, dx \nonumber \displaybreak[0] \\
&+ \frac{d}{dt} \Big{(} \int_{I} (c Q - c_{h} Q_{h}) (c - I_{h}(c)) \, dx \Big{)} \displaybreak[0] \nonumber \\
&- \int_{I} (c Q - c_{h} Q_{h}) \big{(} c - I_{h}(c) \big{)}_{t} \, dx \displaybreak[0] \nonumber \\
&+ \int_{I} \frac{(c - c_{h})_{x} (c - I_{h}(c))_{x}}{Q_{h}} \, dx \displaybreak[0] \nonumber \\
&+ \int_{I} c_{x} \frac{(c - c_{h})_{x}}{\sqrt{Q_{h}}} \frac{Q - Q_{h}}{\sqrt{Q_{h}} Q} \, dx + \int_{I} c_{x} (I_{h}(c) - c)_{x} \frac{Q - Q_{h}}{Q_{h} \, Q} \, dx \nonumber \\
&= \sum_{j=1}^{7} K_j \label{eq:toest_c}.
\end{align}

For the first term we can write
\begin{align*}
-K_1 &= \int_{I} c_{t} (Q-Q_{h}) (c - c_{h}) \, dx + \int_{I} c ( Q-\Qhat)_{t} (c - c_{h}) \, dx + \int_{I} c ( \Qhat-Q_{h})_{t} (c - c_{h}) \, dx\\
&=:K_{1,0}+K_{1,1}+ K_{1,2}.
\end{align*}
Using \eqref{pippoQ}, the smoothness assumptions on $c$ (recall \eqref{cond-c}) and \eqref{(2.18)} we infer immediately that
\begin{align*}
|K_{1,0}| \leq C \|c-c_{h}\|_{L^{2}(I)} (\|\rho_{ux}\|_{L^{2}(I)}+ \|e_{ux}\|_{L^{2}(I)}) \leq C \|c-c_{h}\|_{L^{2}(I)}^{2} + C\|e_{ux}\|_{L^{2}(I)}^{2} + C h^{2}.
\end{align*}
Next we write using \eqref{(2.21)}, \eqref{(2.27)}, the fact that $|\nu-\hat{\nu}_{h}| \leq C |\rho_{ux}|$ and \eqref{(2.18)}
\begin{align*}
|K_{1,1}| & = \left|\int_{I} c ( Q-\Qhat)_{t} (c - c_{h}) \right| \\
& = \left| \int_{I} c (c - c_{h})(\frac{u_{x}}{Q} - \frac{\uhatx}{\Qhat})\uhattx + \int_{I} c (c - c_{h})\frac{u_{x}}{Q}(u_{tx}-\uhattx) \right|
\\
& \leq C\|c-c_{h}\|_{L^{2}(I)} \|\nu- \hat{\nu}_{h}\|_{L^{2}(I)} + C \|c-c_{h}\|_{L^{2}(I)} \|\rho_{utx}\|_{L^{2}(I)}\\
& \leq Ch\|c-c_{h}\|_{L^{2}(I)}\leq C\|c-c_{h}\|_{L^{2}(I)}^{2} + Ch^{2}.
\end{align*}
For the last term we observe using partial integration that
\begin{align*}
K_{1,2}& =\int_{I} c ( \Qhat-Q_{h})_{t} (c - c_{h}) = \int_{I} c (c - c_{h}) \left(\frac{\uhatx}{\Qhat} \uhattx -\frac{u_{hx}}{Q_{h}}u_{htx} \right)\\
& =\int_{I} c (c - c_{h}) \uhattx \left(\frac{\uhatx}{\Qhat} -\frac{u_{hx}}{Q_{h}}\right) 
+ \int_{I} c (c - c_{h}) \left(\frac{u_{hx}}{Q_{h}} -\frac{u_{x}}{Q} \right) (\uhattx -u_{htx})\\
& \quad +\int_{I} c (c - c_{h})\frac{u_{x}}{Q} (\uhattx -u_{htx}) \\
& =\int_{I} c (c - c_{h}) \uhattx \left(\frac{\uhatx}{\Qhat} -\frac{u_{hx}}{Q_{h}}\right) 
+ \int_{I} c (c - c_{h}) \left(\frac{u_{hx}}{Q_{h}} -\frac{u_{x}}{Q} \right) (\uhattx -u_{htx})\\
& \quad - \int_{I} \frac{\partial}{\partial x}\left( c (c - c_{h})\frac{u_{x}}{Q}\right) (\uhatt -u_{ht}) .
\end{align*}
Therefore we infer using \eqref{(2.27)}, \eqref{(3.4)}, \eqref{pipponu}, \eqref{(2.18)}, and embedding theory that
\begin{align*}
|K_{1,2}| & \leq C\|c-c_{h}\|_{L^{2}(I)} \|e_{ux}\|_{L^{2}(I)} + C \|e_{ut}\|_{L^{2}(I)} ( \|c-c_{h}\|_{L^{2}(I)} +
\|(c-c_{h})_{x}\|_{L^{2}(I)}) \\
& \quad+ C \|c-c_{h}\|_{L^{\infty}(I)} \|\nu-\nu_{h}\|_{L^{2}(I)} \|e_{utx}\|_{L^{2}(I)} \\
& \leq C \|c-c_{h}\|_{L^{2}(I)}^{2} + C \|e_{ux}\|_{L^{2}(I)}^{2} + \epsilon \|(c-c_{h})_{x}\|_{L^{2}(I)}^{2} + C_{\epsilon} \|e_{ut}\|_{L^{2}(I)}^{2} \\
& \quad + C \|c-c_{h}\|_{H^{1}(I)} (h+ \|e_{ux}\|_{L^{2}(I)})\|e_{utx}\|_{L^{2}(I)} \\
& \leq C \|c-c_{h}\|_{L^{2}(I)}^{2} + C \|e_{ux}\|_{L^{2}(I)}^{2} + \epsilon \|(c-c_{h})_{x}\|_{L^{2}(I)}^{2} + C_{\epsilon} \|e_{ut}\|_{L^{2}(I)}^{2} \\
& \quad + C_{\epsilon} \|e_{ux}\|_{L^{2}(I)}^{2} \|e_{utx}\|_{L^{2}(I) }^{2}
+ C_{\epsilon} h^{2} \|e_{utx}\|_{L^{2}(I)}^{2}.
\end{align*}
Putting all previous estimate together we infer that
\begin{align}
|K_{1}| \leq & C \|c-c_{h}\|_{L^{2}(I)}^{2} + C \|e_{ux}\|_{L^{2}(I)}^{2} + \epsilon \|(c-c_{h})_{x}\|_{L^{2}(I)}^{2}\\
& \quad + C_{\epsilon} \|e_{ut}\|_{L^{2}(I)}^{2} + C_{\epsilon} \|e_{ux}\|_{L^{2}(I)}^{2} \|e_{utx}\|_{L^{2}(I) }^{2}
+ C_{\epsilon} h^{2} \|e_{utx}\|_{L^{2}(I)}^{2} + C h^{2}. \notag
\end{align}

The term $K_{2}$ can be estimated as follows using integration by parts, the fact that $\| c-c_{h}\|_{L^{\infty}} $ is bounded (thanks to \eqref{(3.2)bis}), \eqref{(2.27)}, and \eqref{(2.20)}:
\begin{align*}
|K_{2}|&=\left |\frac{1}{2} \int_{I} (c-c_{h})^{2} Q_{ht}\right| \leq \left|\frac{1}{2} \int_{I} (c-c_{h})^{2} Q_{t} \right| + \left|\frac{1}{2} \int_{I} (c-c_{h})^{2} (Q_{ht} -Q_{t})\right| \\
& \leq C \|c-c_{h}\|_{L^{2}(I)}^{2} + \left|\frac{1}{2} \int_{I} (c-c_{h})^{2} \Big{[} ( \frac{u_{hx}}{Q_{h}}-\frac{u_{x}}{Q} ) u_{htx} +\frac{u_{x}}{Q}(u_{htx} -u_{tx}) \Big{]} \right| \\
& \leq C \|c-c_{h}\|_{L^{2}(I)}^{2} + \|c-c_{h}\|_{L^{\infty}(I)} \left|\frac{1}{2} \int_{I} (c-c_{h}) ( \frac{u_{hx}}{Q_{h}}-\frac{u_{x}}{Q} ) (u_{htx} -\uhattx ) \right| \\
& \quad+ \|c-c_{h}\|_{L^{\infty}(I)} \left|\frac{1}{2} \int_{I} (c-c_{h}) ( \frac{u_{hx}}{Q_{h}}-\frac{u_{x}}{Q} ) \uhattx \right| +\left|\frac{1}{2} \int_{I} \frac{\partial}{\partial x} \left((c-c_{h})^{2} \frac{u_{x}}{Q} \right)(u_{ht} -u_{t}) \right|\\
& \leq C \|c-c_{h}\|_{L^{2}(I)}^{2} + C \|c-c_{h}\|_{L^{\infty}(I)} \|\nu_{h} -\nu\|_{L^{2}(I)} \|e_{utx}\|_{L^{2}(I)} + C \|\nu_{h}-\nu \|_{L^{2}(I)}^{2} \\
& \qquad + \epsilon \|(c-c_{h})_{x}\|_{L^{2}(I)}^{2} + C_{\epsilon} \|e_{ut}\|_{L^{2}(I)}^{2} + C_{\epsilon} h^{4}|\log h|^{4}.
\end{align*}
The second term in the last line of above inequality can be treated as the same term appearing in $K_{1,2}$, so that we obtain
\begin{align}
|K_{2}| \leq & C \|c-c_{h}\|_{L^{2}(I)}^{2} + C \|e_{ux}\|_{L^{2}(I)}^{2} + \epsilon \|(c-c_{h})_{x}\|_{L^{2}(I)}^{2}\\
& \quad + C_{\epsilon} \|e_{ut}\|_{L^{2}(I)}^{2} + C_{\epsilon} \|e_{ux}\|_{L^{2}(I)}^{2} \|e_{utx}\|_{L^{2}(I) }^{2}
+ C_{\epsilon} h^{2} \|e_{utx}\|_{L^{2}(I)}^{2} + C_{\epsilon} h^{2}. \notag
\end{align}

The remaining terms $K_{3}, \ldots, K_{7}$ are estimated as in \cite[Lemma~4.2]{PS17}. Precisely: 
For $K_{3}$ we note that by \eqref{cond-c}, \eqref{(3.2)}, \eqref{(4.2)}, \eqref{pippoQ}, and \eqref{(2.18)}
\begin{align}
& \Big |\int_{I} (c Q - c_{h} Q_{h}) (c - I_{h}(c)) \, dx \Big | \nonumber \\
& = \Big | \int_{I} (c - c_{h}) Q_{h} (c - I_{h}(c)) \, dx + \int_{I} c (Q - Q_{h}) (c - I_{h}(c)) \, dx \Big | \nonumber \\
& \leq \hat{\eps} \int_{I} (c - c_{h})^{2} Q_{h} \, dx + C \int_{I} (Q - Q_{h})^{2} \, dx + C_{\hat{\eps}} h^4 \| c \|_{H^{2}(I)}^{2} \nonumber\\
& \leq \hat{\eps}\int_{I} (c - c_{h})^{2} Q_{h} \, dx + C \| e_{ux} \|_{L^{2}(I)}^{2} + C_{\hat{\eps}} h^{2}
\label{eq:estK3}
\end{align}
with $\hat{\eps}>0$ that will be picked later on. We will refer to this estimate later on when integrating \eqref{eq:toest_c} with respect to time. 

For the term $K_{4}$ we infer from \eqref{(4.2)}, \eqref{(3.2)}, \eqref{cond-c}, \eqref{pippoQ}, and \eqref{(2.18)}, that
\begin{align*}
|K_4| &= \Big{|} \int_{I} c (Q - Q_{h}) (c_{t} - I_{h}(c_{t})) \, dx + \int_{I} (c - c_{h}) (c_{t} - I_{h}(c_{t})) Q_{h} \, dx \Big{|} \\
&\leq C \int_{I} (Q - Q_{h})^{2} \, dx + C \int_{I} (c - c_{h})^{2} Q_{h} \, dx + C \|c_{t}\|_{H^1(I)}^{2} h^{2}\\
& \leq C \|c-c_{h}\|_{L^{2}(I)}^{2} +C \| e_{ux} \|_{L^{2}(I)}^{2} + C h^{2}.
\end{align*}
By the interpolation estimates \eqref{(4.2)}, \eqref{(4.2)bis}, embedding theory, \eqref{cond-c}, \eqref{pippoQ}, and \eqref{(2.18)} we have the following estimates for the terms involving spatial gradients (for $\epsilon > 0$ arbitrarily small):
\begin{align*}
|K_5| &\leq \epsilon \int_{I} \frac{|(c - c_{h})_{x}|^{2}}{Q_{h}} \, dx + C_{\epsilon} \int_{I} \frac{|(c - I_{h}(c))_{x}|^{2}}{Q_{h}} \, dx \\
&\leq \epsilon \int_{I} \frac{|(c - c_{h})_{x}|^{2}}{Q_{h}} \, dx + C_{\epsilon} \|c\|_{H^{2} (I)}^{2} h^{2}, \displaybreak[0] \\
|K_6| &\leq \epsilon \int_{I} \frac{|(c - c_{h})_{x}|^{2}}{Q_{h}} \, dx + C_{\epsilon} \| e_{ux} \|_{L^{2}(I)}^{2} + C_{\epsilon} h^{2}, \displaybreak[0] \\
|K_7| &\leq C\|c\|_{H^{2} (I)}^{2} h^{2} + C \| e_{ux} \|_{L^{2}(I)}^{2} + C h^{2}.
\end{align*}
Summarizing all these estimates and using \eqref{(3.2)} we obtain from \eqref{eq:toest_c} that 
\begin{align*}
\frac{d}{dt} \Big{(} \int_{I} & \frac{1}{2} |c - c_{h}|^{2} |Q_{h}| \, dx \Big{)} + \int_{I} \frac{|c_{x} - c_{hx}|^{2}}{Q_{h}} \, dx \nonumber \\
&\leq \epsilon C \int_{I} \frac{|c_{x} - c_{hx}|^{2}}{Q_{h}} \, dx \nonumber \displaybreak[0] \\
&\quad + \frac{d}{dt} \Big{(} \int_{I} (c - c_{h}) Q_{h} (c - I_{h}(c)) \, dx + \int_{I} c (Q - Q_{h}) (c - I_{h}(c)) \, dx \Big{)} \displaybreak[0] \nonumber \\
&\quad + C \int_{I} |c - c_{h}|^{2} Q_{h} \, dx + C_{\epsilon} \|e_{ux}\|_{L^{2}(I)}^{2} + C_{\epsilon} \|e_{ut}\|_{L^{2}(I)}^{2} \\
& \quad + C_{\epsilon} \|e_{ux}\|_{L^{2}(I)}^{2} \|e_{utx}\|_{L^{2}(I) }^{2}
+ C_{\epsilon} h^{2} \|e_{utx}\|_{L^{2}(I)}^{2} + C_{\epsilon} h^{2}.
\end{align*}
Integrating with respect to time from $0$ to ${\intertime}$, using \eqref{eq:estK3}, \eqref{(3.2)}, and embedding theory we get for $\epsilon$ small enough that
\begin{align*}
\int_{I} |c({\intertime}) - & c_{h}({\intertime})|^{2} \, dx + \int_0^{\intertime} \int_{I} |c_{x} - c_{hx}|^{2} \, dx dt \nonumber \\
&\leq C \int_{I} |\cic - \cich|^{2} \, dx + \int_{I} |(\cic Q(0) - \cich Q_h(0)) (\cic - I_{h}(\cic)) |\, dx \displaybreak[0] \nonumber \\
&\quad + C \hat{\eps} \int_{I} |c({\intertime}) - c_{h}({\intertime})|^{2} \, dx + C\|e_{ux} ({\intertime})\|_{L^{2}(I)}^{2} \displaybreak[0] \nonumber \\
&\quad + C \int_{0}^{{\intertime}} \int_{I} |c - c_{h}|^{2} \, dx dt + 
C \int_{0}^{{\intertime}} \|e_{ux}\|_{L^{2}(I)}^{2} dt + C \int_{0}^{{\intertime}} \|e_{ut}\|_{L^{2}(I)}^{2} dt \\
& \quad + C \int_{0}^{{\intertime}} \|e_{ux}\|_{L^{2}(I)}^{2} \|e_{utx}\|_{L^{2}(I) }^{2} dt
+ C h^{2} \int_{0}^{{\intertime}} \|e_{utx}\|_{L^{2}(I)}^{2} dt + C_{\hat{\eps}} h^{2}.
\end{align*}
Note that thanks to our choice of the discrete initial data \eqref{icondh} 
\[
 \int_{I} |\cic - \cich|^{2} \, dx = \int_{I} |\cic - I_{h}(\cic)|^{2} \, dx \leq C\|\cic\|_{H^1(I)}^{2} h^{2}.
\]
Moreover with the arguments used to estimate $K_3$, and using the fact that 
$\|e_{ux}(0)\|_{L^2(I)}^{2} = 0$ (recall Lemma~\ref{indata}) 
we get that
\begin{align*}
\int_{I} |(\cic Q(0) - \cich Q_h(0)) (\cic - I_{h}(\cic)) |\, dx \leq C\|\cic\|_{H^1(I)}^{2} h^{2} + C h^{2} + C\|e_{ux}(0)\|_{L^{2}(I)}^{2} \leq Ch^{2}.
\end{align*}
Choosing $\hat{\eps}$ small enough and using the above estimates for the initial data finishes the proof.
\end{proof}

\subsection{Proof of the main Theorem}

From Lemma~\ref{lemma:c-ch} and Lemma~\ref{lemma3.2}, and then using \eqref{(3.21)} and \eqref{(3.21)bis} we infer for ${\intertime} \in [0, T_{h})$ that
\begin{align*}
\| (c - & c_{h})({\intertime})\|_{L^{2}(I)}^{2} + \int_0^{\intertime} \|(c - c_{h})_{x}\|_{L^{2}(I)}^{2} dt \nonumber \\
& \leq C \epsilon \| e_{w}(\intertime) \|_{L^{2}(I)}^{2} + C_{\epsilon} \| e_{u}(\intertime) \|_{L^{2}(I)}^{2} + C \int_{0}^{{\intertime}} \| c - c_{h} \|_{L^{2}(I)}^{2} dt \nonumber \\
& \quad + C \int_{0}^{{\intertime}} \big{(} \epsilon \| e_{w} \|_{L^{2}(I)}^{2} + C_{\epsilon} \| e_{u} \|_{L^{2}(I)}^{2} \big{)} dt + C \int_{0}^{{\intertime}} \|e_{ut}\|_{L^{2}(I)}^{2} dt \\
& \quad + C \int_{0}^{{\intertime}} \big{(} \epsilon \| e_{w} \|_{L^{2}(I)}^{2} + C_{\epsilon} \| e_{u} \|_{L^{2}(I)}^{2} + C h^{4} |\log h|^{2} \big{)} \|e_{utx}\|_{L^{2}(I) }^{2} dt \nonumber \\
& \quad + C h^{2} \int_{0}^{{\intertime}} \|e_{utx}\|_{L^{2}(I)}^{2} dt + C h^{2} \nonumber \\
& \leq C \int_{0}^{{\intertime}} \|c - c_{h}\|^{2}_{L^{2}(I)} dt + C h^{2} + C h^{2} \int_{0}^{{\intertime}} \|e_{utx}\|_{L^{2}(I)}^{2} dt \\
& \quad 
+ C \int_{0}^{{\intertime}} \left( \int_{0}^{t}\|(c - c_{h})(s)\|^{2}_{L^{2}(I)} ds \right) \|e_{utx}(t)\|_{L^{2}(I) }^{2} dt \displaybreak[0] \\
& \leq C \int_{0}^{{\intertime}} \|c - c_{h}\|^{2}_{L^{2}(I)} dt + C h^{2} + C h^{2} \int_{0}^{{\intertime}} \|e_{utx}\|_{L^{2}(I)}^{2} dt\\
& \quad 
+ C \left(\int_{0}^{{\intertime}} \|(c - c_{h})(t)\|^{2}_{L^{2}(I)} dt \right) \left(\int_{0}^{{\intertime}} \|e_{utx}(t)\|_{L^{2}(I) }^{2} dt \right) \displaybreak[0] \\
& \leq C \int_{0}^{{\intertime}} \|c - c_{h}\|^{2}_{L^{2}(I)} dt + C h^{2} + C \frac{h^{2}}{\mu(h)} \left( \int_{0}^{{\intertime}} \|c - c_{h}\|^{2}_{L^{2}(I)} dt + h^{2}\right)\\
& \quad +C\frac{1}{\mu(h)} \left(\int_{0}^{{\intertime}} \|c - c_{h}\|^{2}_{L^{2}(I)} dt \right) \left(\int_{0}^{{\intertime}} \|c - c_{h}\|^{2}_{L^{2}(I)} dt + h^{2} \right).
\end{align*}

Using that $\mu(h) \sim h^{r}$ with $r \in [1,2)$ and the Cauchy-Schwarz inequality we finally obtain for any~${\intertime} \in [0,T_{h})$ that
\begin{align} \label{zwischenergebnis}
\| (c - & c_{h})({\intertime})\|_{L^{2}(I)}^{2} + \int_0^{\intertime} \|(c - c_{h})_{x}\|_{L^{2}(I)}^{2} dt \nonumber \\
& \leq C \int_{0}^{{\intertime}} \|c - c_{h}\|^{2}_{L^{2}(I)} dt + C h^{2} + \frac{C}{\mu(h)} \int_{0}^{{\intertime}} \|(c - c_{h})\|^4_{L^{2}(I)} dt.
\end{align}

We now employ the following generalized Gronwall lemma, whose proof can be found in \cite[Prop.~6.2]{Bartels}:
\begin{lemma}\label{genG}
Suppose that the nonnegative functions $a$ and $y_{i}$, $i=1,2,3$ with $y_{1} \in C([0,\bar{T}])$, $y_{2}, y_{3} \in L^{1}(0,\bar{T})$, $a \in L^{\infty}(0,\bar{T})$, and the real number $A \geq 0$ satisfy
\begin{align*}
y_{1}(T') + \int_{0}^{T'} y_{2}(t) dt \leq A + \int_{0}^{T'} a(t)y_{1}(t) dt + \int_{0}^{T'} y_{3}(t) dt
\end{align*}
for all $T' \in [0,\bar{T}]$. Assume that for some $B\geq0$, some $\beta>0$, and every $T' \in [0,\bar{T}]$, we have that
\begin{align*}
\int_{0}^{T'} y_{3}(t) dt \leq B \left( \sup_{ t\in [0,T']} y_{1}^{\beta}(t)\right) \int_{0}^{T'} (y_{1}(t) + y_{2}(t)) dt.
\end{align*}
Set $E:=\exp (\int_{0}^{\bar{T}} a(t) dt)$ and assume that 
\begin{align}\label{condAE}
8AE \leq \frac{1}{(8B(1+\bar{T})E)^{1 / \beta}}.
\end{align}
We then have
\begin{align*}
\sup_{ t\in [0,\bar{T}]} y_{1}(t) + \int_{0}^{\bar{T}} y_{2}(t) dt \leq 8AE= 8A \exp (\int_{0}^{\bar{T}} a(t) dt).
\end{align*}
\end{lemma}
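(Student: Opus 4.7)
The plan is a bootstrap (continuation) argument in time, combined with the differential form of the Gronwall inequality. I would set $\chi(T') := \sup_{t \in [0,T']} y_1(t) + \int_0^{T'} y_2(t)\,dt$, which is non-decreasing and continuous in $T'$ (by continuity of $y_1$ and integrability of $y_2$) with $\chi(0)=0$. Then I would introduce $T^* := \sup\{ T' \in [0,\bar T] : \chi(T') \le 8AE \}$; for $A>0$, $T^* > 0$ by continuity (the case $A=0$ being standard), and the goal is to prove $T^* = \bar T$.

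On the bootstrap interval $[0, T^*]$ the nonlinear assumption on $y_3$ collapses to a linear bound. Indeed, $\sup_{t \le T'} y_1^\beta(t) \le \chi(T')^\beta \le (8AE)^\beta$ and $\int_0^{T'}(y_1+y_2)\,dt \le (1+\bar T)\chi(T')$, so combining with the hypothesis on $y_3$ and invoking \eqref{condAE} gives
\[
 \int_0^{T'} y_3\,dt \le B(8AE)^\beta (1+\bar T)\chi(T') \le \frac{\chi(T')}{8E}.
\]
This is the only place where \eqref{condAE} enters, and it is precisely designed so that the nonlinear term becomes a small linear perturbation.

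To exploit this, I would set $Y(T') := \int_0^{T'} (a y_1 + y_3)\,dt$, so that the main hypothesis reads $y_1(T') + \int_0^{T'} y_2\,dt \le A + Y(T')$ and in particular $y_1 \le A + Y$. This yields the differential inequality $Y' \le a(A+Y) + y_3$; the integrating factor $e^{-\int_0^t a}$ followed by integration over $[0,T']$ gives $Y(T') \le A(e^{\int_0^{T'} a}-1) + e^{\int_0^{T'} a}\int_0^{T'} y_3 \le AE - A + \chi(T')/8$ on $[0,T^*]$. Hence for every $\tau \in [0,T^*]$,
\[
 y_1(\tau) + \int_0^\tau y_2\,dt \le A + Y(\tau) \le AE + \chi(T^*)/8.
\]
Dropping the non-negative integral on the left and taking the supremum over $\tau \in [0,T^*]$ yields $\sup_{t \le T^*} y_1(t) \le AE + \chi(T^*)/8$, while setting $\tau = T^*$ and dropping $y_1(T^*)$ yields $\int_0^{T^*} y_2\,dt \le AE + \chi(T^*)/8$. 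Adding these two inequalities gives $\chi(T^*) \le 2AE + \chi(T^*)/4$, i.e.\ $\chi(T^*) \le (8/3)\,AE$.

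The strict inequality $\chi(T^*) \le (8/3)AE < 8AE$, combined with the continuity of $\chi$, forces $T^* = \bar T$: otherwise $\chi$ would stay below $8AE$ on a right neighbourhood of $T^*$, contradicting maximality. The main obstacle, and the place where the specific numerical factor $8$ matters, is the opening step: one must recognise that \eqref{condAE} delivers exactly the quantitative smallness needed to dominate $\int y_3$ by $\chi/(8E)$. Once this linearisation is in place, the differential-form Gronwall trick produces the stronger bound $(8/3)AE$, which is well within the claimed $8AE$ and gives the bootstrap the required slack.
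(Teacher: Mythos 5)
Your proof is correct and follows essentially the same route as the proof the paper relies on (the paper does not prove Lemma~\ref{genG} itself but refers to \cite[Prop.~6.2]{Bartels}): a continuation argument in which \eqref{condAE} converts the superlinear $y_{3}$-term into the small linear perturbation $\int_{0}^{T'}y_{3}\,dt\le \chi(T')/(8E)$ on the bootstrap interval, the classical (integrating-factor) Gronwall step then gives the improved bound $\chi(T^{*})\le \tfrac{8}{3}AE<8AE$, and continuity of $\chi$ closes the argument. Two cosmetic remarks: $\chi(0)=y_{1}(0)\le A$ rather than $0$ (harmless, since $A<8AE$ when $A>0$ and $E\ge 1$), and for $A=0$ the bootstrap at level $8AE=0$ does not start, so it is cleanest to apply the $A>0$ case with arbitrarily small $A'>0$ (which still satisfies \eqref{condAE}) and let $A'\to 0$.
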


In our situation we take $\bar{T} = \intertime$, $y_{1}(t)= \| (c-c_{h})(t)\|_{L^{2}(I)}^{2}$, $A=Ch^{2}$ where $C$ is the constant from \eqref{zwischenergebnis} (which depends on $u,c,T$ but not on $h$ or $T_{h}$), $a(t)=C$, $B=\frac{C}{\mu(h)}$, $y_{3} = \frac{C}{\mu(h)} y_{1}^{2}$, $\beta=1$, $y_{2}=0$. 
For $0 < {\intertime} < T_{h} \leq T$ we see that $8 A E = 8Ch^{2} \exp(C \intertime) \leq 8Ch^{2} \exp(C T)$ and that
\[
 \frac{1}{(8B(1+\bar{T})E)^{1 / \beta}} = \frac{\mu(h)}{8C (1+{\intertime}) \exp (C{\intertime})} \geq \frac{\mu(h)}{8C (1+T) \exp (CT)}.
\]
With our choice \eqref{def:mu} for $\mu(h)$ where $r<2$ we get that \eqref{condAE} is satisfied for all $h \leq h_{0}$ if
\[
 8 C h_{0}^{2} \exp(C T) \leq \frac{C_{\mu} h_{0}^r}{8C (1+T) \exp (CT)} \quad \Leftrightarrow \quad  h_{0}^{2-r} \leq \frac{C_{\mu}}{64 C^2 (1+T) \exp(2 C T)}.
\]

Thus we infer that for $h \leq h_{0}$ and any ${\intertime} \in [0, T_{h})$ (and, by continuity, in fact up to time $T_{h}$)
\begin{align} \label{errC}
\| (c - & c_{h})({\intertime})\|_{L^{2}(I)}^{2} + \int_0^{\intertime} \|(c - c_{h})_{x}\|_{L^{2}(I)}^{2} dt \leq C h^{2}.
\end{align}
Plugging this result back into \eqref{(3.21)}, \eqref{(3.21)bis}, and using Lemma~\ref{lemma3.2}, we obtain for any ${\intertime} \in [0, T_{h}]$ that
\begin{align} \label{errE}
\| e_{w}({\intertime})\|_{L^{2}(I)}^{2} + \| e_{u}({\intertime})\|_{L^{2}(I)}^{2} + \| e_{ux}({\intertime})\|_{L^{2}(I)}^{2} 
+ \int_0^{\intertime} \| e_{ut}({\intertime})\|_{L^{2}(I)}^{2} dt + \mu(h) \int_{0}^{\bar{t}} \|e_{utx} \|_{L^{2}(I)}^{2} dt \leq Ch^{2}.
\end{align}

Now that we have achieved error estimates on the time intervall $[0, T_{h}]$ with a constant $C$ that does not depend on $h$ or $T_{h}$ we are able to show that in fact it must be $T_{h}=T$ for all $h$ sufficiently small. Indeed, observe that by \eqref{pippoQ}, \eqref{(2.19)}, \eqref{IE-2}, \eqref{errE}, we get
\begin{align*}
\|Q_{h}({\intertime}) \|_{L^{\infty}(I)} &\leq C_{0} + \|(Q-Q_{h})({\intertime}) \|_{L^{\infty}(I)} \leq C_{0} + \|\rho_{ux}({\intertime})\|_{L^{\infty}(I)}
+ \|e_{ux}({\intertime})\|_{L^{\infty}(I)} \\& \leq C_{0} +C h |\log h| + C\frac{h}{\sqrt{h}} \leq \frac{3}{2}C_{0}
\end{align*}
provided that $h \leq h_{0}$ (after decreasing $h_{0}$ if required). Similarly, by \eqref{(2.24)}, \eqref{IE-2}, \eqref{(4.2)}, \eqref{errE}, and \eqref{errC}, we obtain
\begin{align*}
\|w_{h}({\intertime}) \|_{L^{\infty}(I)} & \leq \|e_{w} \|_{L^{\infty}(I)} + \| \what -I_{h} w \|_{L^{\infty}(I)} + \|I_{h} w \|_{L^{\infty}(I)}\\
& \leq C \frac{h}{\sqrt{h}} + \frac{C}{\sqrt{h}} \| \what -I_{h} w \|_{L^{2}(I)} + C_{1} \\
& \leq C_{1}+ C \frac{h}{\sqrt{h}} + \frac{C}{\sqrt{h}} ( \| \rho_{w} \|_{L^{2}(I)} +\| w -I_{h} w \|_{L^{2}(I)} ) \leq \frac{3}{2}C_{1} ,\\
\|c_{h}({\intertime}) \|_{L^{\infty}(I)} &\leq \|I_{h} c({\intertime}) \|_{L^{\infty}(I)} + \| (c_{h} - I_{h} c)({\intertime}) \|_{L^{\infty}(I)} \\
& \leq \| c \|_{C([0,T], L^{\infty }(I))} +\frac{C}{\sqrt{h}} \| (c_{h} - I_{h} c )({\intertime})\|_{L^{2}(I)}\\
& \leq \embconst \| c \|_{C([0,T], H^{1}(I))} +\frac{C}{\sqrt{h}} ( \| (c_{h} - c)({\intertime}) \|_{L^{2}(I)} + \| (c - I_{h} c)({\intertime}) \|_{L^{2}({\intertime})})\\
& \leq \embconst C_{2} +\frac{C}{\sqrt{h}} (h + h\| c \|_{C([0,T], H^{1}(I))}) \leq \frac{3}{2} \embconst C_{2},\\
\|c_{h}\|_{L^{2}((0,T_{h}),H^1(I))} &\leq \frac{3}{2} C_{3},
\end{align*}
for all $h \leq h_{0}$ independently of $T_{h}$ (after decreasing $h_{0}$ if required). If we had that $T_{h} < T$ then we could establish \eqref{(3.2)} and \eqref{(3.2)bis} on the time intervall $[0, T_{h} + \delta]$ for some $\delta >0$ which would contradict the maximality of $T_{h}$. Hence $T_{h}=T$. The first three error estimates stated in Theorem~\ref{mainthm} follow from \eqref{errE}, \eqref{errC}, \eqref{(2.18)}, \eqref{(2.20)}, \eqref{(2.24)}, Lemma~\ref{lemma3.3}, \eqref{(3.2)bis}, \eqref{stern}, and \eqref{(2.23)}. The last statement in Theorem~\ref{mainthm} follows from \eqref{errE} and \eqref{(2.21)}.

\section{Numerical simulations}
\label{sec:numsim}

We now aim for assessing and supporting our theoretical convergence results by some numerical simulations. We prescribe functions $(u,w,c)$ and ensure that they solve \eqref{eq1}--\eqref{icond} by accounting for suitable source terms $s_u, s_c : I \to \mbR$ for $u$ and $c$, respectively. 

The time discretisation of Problem \ref{prob:semidis} is based on uniform time steps $\delta = h^2$. This choice turned out small enough to ensure that the errors that we report on below are purely due to the spatial discretisation. An upper index will indicate values at the time $t^{(m)} := m \delta$ in the following, $m = 0 \dots, M := T / \delta$. We use a simple order-one IMEX-scheme which linearises the problem in each time step and decouples the solution of the geometric equation from the solution of the equation on the curve: 

\begin{prob}[Fully discrete scheme] \label{prob:fullydis}
Find functions $u_{\delta h}^{(m)} \in X_h$ and $w_{\delta h}^{(m)}, c_{\delta h}^{(m)} \in X_{h0}$, $m=0, \dots, M$, of the form
\begin{equation*}
u_{\delta h}^{(m)} = \sum_{j=0}^{N} u_j^{(m)} \varphi_j(x), \quad c_{\delta h}^{(m)} = \sum_{j=1}^{N-1} c_j^{(m)} \varphi_j(x), \quad w_{\delta h}^{(m)} = \sum_{j=1}^{N-1} w_j^{(m)} \varphi_j(x),
\end{equation*} 
with $u_j^{(m)}, c_{j}^{(m)}, w_{j}^{(m)} \in \mbR$ such that 
\[
 u_{\delta h}^{(m)} -I_{h}(\ubc) \in X_{h0} \, \, \forall m = 1, \dots, M, \quad u_{\delta h}^{(0)} = \hat{\uic}, \quad c_{\delta h}^{(0)} = I_{h} (\cic),
\]
and such that 
\begin{align} \label{eq:fulldis_u}
\int_I \mu(h) \frac{u_{\delta h x}^{(m)} - u_{\delta h x}^{(m-1)}}{\delta} \varphi_{hx} &+ \frac{(u_{\delta h}^{(m)} - u_{\delta h}^{(m-1)}) \varphi_{h}}{\delta Q_{\delta h}^{(m-1)}} + \frac{1}{2} (w_{\delta h}^{(m-1)})^2 \frac{u_{\delta h x}^{(m)} \varphi_{hx}}{(Q_{\delta h}^{(m-1)})^3} + \frac{w_{\delta h x}^{(m)} \varphi_{hx}}{(Q_{\delta h}^{(m-1)})^3} \, dx \nonumber \\ 
&= \int_{I} I_{h} (f(c_{\delta h}^{(m-1)}) + s_u^{(m)}) \varphi_{h} \, dx, \\ \label{eq:fulldis_w}
\int_I \frac{w_{\delta h}^{(m)} \psi_{h}}{Q_{\delta h}^{(m-1)}} - \frac{u_{\delta h x}^{(m)} \psi_{hx}}{Q_{\delta h}^{(m-1)}} \, dx \, &= 0, \\  \label{eq:fulldis_c}
\int_{I} c_{\delta h}^{(m)} Q_{\delta h}^{(m)} \zeta_{h} + \delta \frac{c_{\delta h x}^{(m)} \zeta_{hx}}{Q_{\delta h}^{(m)}} \, dx \, &= \int_{I} c_{\delta h}^{(m-1)} Q_{\delta h}^{(m-1)} \zeta_{h} + \delta I_{h}(s_c^{(m)}) \zeta_{h}, 
\end{align}
for all $\varphi_{h}, \psi_{h}, \zeta_{h} \in X_{h0}$ and for $m=1, \dots, M$. Here, $Q_{\delta h}^{(m-1)}$ denotes the discrete length element, $Q_{\delta h}^{(m-1)} = \sqrt{1 + (u_{\delta h x}^{(m-1)})^{2}}$, and $\mu(h)=C_\mu h^{r}$ for some $r \in [1,2)$ and $C_\mu \geq 0$ (as defined in \eqref{def:mu}).
\end{prob}

In the test examples further below we monitored the following errors:
\begin{align}
\mmm{E}_u(L^\infty,L^2) &:= \| u - u_{\delta h} \|_{L^\infty(J,L^2(I))}^2, &\quad \mmm{E}_u(L^\infty,H^1) &:= \| u_{x} - u_{\delta h x} \|_{L^\infty(J,L^2(I))}^2, \nonumber \\
\mmm{E}_u(H^1,L^2) &:= \| u_{t} - u_{\delta h t} \|_{L^2(J,L^2(I))}^2,     &\quad \mmm{E}_u(H^1,H^1) &:= \| u_{t x} - u_{\delta h t x} \|_{L^2(J,L^2(I))}^2, \nonumber \\
\mmm{E}_w(L^\infty,L^2) &:= \| w - w_{\delta h} \|_{L^\infty(J,L^2(I))}^2, &\quad \mmm{E}_w(L^2,H^1) &:= \| w_{x} - w_{\delta h x} \|_{L^2(J,L^2(I))}^2, \nonumber \\
\mmm{E}_c(L^\infty,L^2) &:= \| c - c_{\delta h} \|_{L^\infty(J,L^2(I))}^2, &\quad \mmm{E}_c(L^2,H^1) &:= \| c_{x} - c_{\delta h x} \|_{L^2(J,L^2(I))}^2, \label{eq:err_defs} 
\end{align}
where $J = (0,T)$ and $u_{\delta h}$ has been extended by linearly interpolating on each time interval so that, for instance, $u_{\delta h t} = (u_{\delta h}^{(m)} - u_{\delta h}^{(m-1)}) / \delta$ for $t \in (t^{(m-1)},t^{(m)})$. We used sufficiently accurate quadrature rules on each rectangle $[t^{(m-1)},t^{(m)}] \times [x_{j-1},x_{j}]$. 


\setlength{\tabcolsep}{7pt}
\begin{table}
{\footnotesize
\begin{tabular}{|r||l|l||l|l||l|l||l|l|} \hline 
$N$ & $\mmm{E}_u(L^\infty,L^2)$ & $\eoc$ & $\mmm{E}_u(L^\infty,H^1)$ & $\eoc$ & $\mmm{E}_u(H^1,L^2)$ & $\eoc$ & $\mmm{E}_u(H^1,H^1)$ & $\eoc$\\ \hline \hline 
   61 & 5.454e-06 & --     & 5.701e-05 & --     & 8.398e-05 & --     & 9.160e-04 & --     \\
   81 & 3.258e-06 & 1.7910 & 3.389e-05 & 1.8080 & 5.271e-05 & 1.6193 & 5.696e-04 & 1.6516 \\
  101 & 2.155e-06 & 1.8524 & 2.236e-05 & 1.8637 & 3.600e-05 & 1.7087 & 3.870e-04 & 1.7315 \\
  131 & 1.310e-06 & 1.8967 & 1.357e-05 & 1.9042 & 2.258e-05 & 1.7777 & 2.417e-04 & 1.7941 \\
  161 & 8.779e-07 & 1.9283 & 9.081e-06 & 1.9334 & 1.544e-05 & 1.8306 & 1.649e-04 & 1.8426 \\
  201 & 5.683e-07 & 1.9490 & 5.874e-06 & 1.9524 & 1.018e-05 & 1.8680 & 1.085e-04 & 1.8771 \\
\hline
\end{tabular}

\medskip

\begin{tabular}{|r||l|l||l|l||l|l||l|l|} \hline 
$N$ & $\mmm{E}_w(L^\infty,L^2)$ & $\eoc$ & $\mmm{E}_w(L^2,H^1)$ & $\eoc$ & $\mmm{E}_c(L^\infty,L^2)$ & $\eoc$ & $\mmm{E}_c(L^2,H^1)$ & $\eoc$\\ \hline \hline 
   61 & 6.840e-04 & --     & 7.722e-02 & --     & 2.492e-06 & --     & 2.027e-02 & -- \\
   81 & 4.060e-04 & 1.8129 & 4.361e-02 & 1.9864 & 7.992e-07 & 3.9536 & 1.141e-02 & 1.9971 \\
  101 & 2.677e-04 & 1.8660 & 2.796e-02 & 1.9908 & 3.299e-07 & 3.9647 & 7.306e-03 & 1.9984 \\
  131 & 1.624e-04 & 1.9055 & 1.657e-02 & 1.9938 & 1.165e-07 & 3.9690 & 4.324e-03 & 1.9991 \\
  161 & 1.087e-04 & 1.9341 & 1.095e-02 & 1.9958 & 5.108e-08 & 3.9693 & 2.855e-03 & 1.9995 \\
  201 & 7.030e-05 & 1.9530 & 7.013e-03 & 1.9970 & 2.108e-08 & 3.9668 & 1.827e-03 & 1.9997 \\
\hline
\end{tabular}
}
\caption{Errors \eqref{eq:err_defs} and EOCs for the first test problem \eqref{eq:testA} described in Section \ref{sec:numsim} with $\mu(h) = 40 h$.}
\label{tab:conv_ha}
\end{table}

\begin{table}
{\footnotesize
\begin{tabular}{|r||l|l||l|l||l|l||l|l|} \hline 
$N$ & $\mmm{E}_u(L^\infty,L^2)$ & $\eoc$ & $\mmm{E}_u(L^\infty,H^1)$ & $\eoc$ & $\mmm{E}_u(H^1,L^2)$ & $\eoc$ & $\mmm{E}_u(H^1,H^1)$ & $\eoc$\\ \hline \hline 
   61 & 1.233e-05 & --     & 1.289e-04 & --     & 1.731e-04 & --     & 1.882e-03 & --     \\
   81 & 4.828e-06 & 3.2587 & 5.019e-05 & 3.2795 & 7.588e-05 & 2.8672 & 8.146e-04 & 2.9105 \\
  101 & 2.155e-06 & 3.6146 & 2.236e-05 & 3.6239 & 3.600e-05 & 3.3417 & 3.870e-04 & 3.3350 \\
  131 & 7.925e-07 & 3.8129 & 8.215e-06 & 3.8164 & 1.390e-05 & 3.6256 & 1.514e-04 & 3.5777 \\
  161 & 3.517e-07 & 3.9128 & 3.646e-06 & 3.9124 & 6.336e-06 & 3.7851 & 7.033e-05 & 3.6923 \\
  201 & 1.455e-07 & 3.9556 & 1.509e-06 & 3.9517 & 2.674e-06 & 3.8666 & 3.067e-05 & 3.7192 \\
\hline
\end{tabular}

\medskip

\begin{tabular}{|r||l|l||l|l||l|l||l|l|} \hline 
$N$ & $\mmm{E}_w(L^\infty,L^2)$ & $\eoc$ & $\mmm{E}_w(L^2,H^1)$ & $\eoc$ & $\mmm{E}_c(L^\infty,L^2)$ & $\eoc$ & $\mmm{E}_c(L^2,H^1)$ & $\eoc$\\ \hline \hline 
   61 & 1.591e-03 & --     & 8.962e-02 & --     & 2.499e-06 & --     & 2.027e-02 & --     \\
   81 & 6.053e-04 & 3.3586 & 4.604e-02 & 2.3152 & 8.008e-07 & 3.9557 & 1.141e-02 & 1.9971 \\
  101 & 2.677e-04 & 3.6552 & 2.796e-02 & 2.2345 & 3.299e-07 & 3.9738 & 7.306e-03 & 1.9984 \\
  131 & 9.802e-05 & 3.8299 & 1.585e-02 & 2.1631 & 1.160e-07 & 3.9849 & 4.324e-03 & 1.9991 \\
  161 & 4.344e-05 & 3.9198 & 1.023e-02 & 2.1088 & 5.064e-08 & 3.9913 & 2.855e-03 & 1.9995 \\
  201 & 1.796e-05 & 3.9589 & 6.442e-03 & 2.0733 & 2.077e-08 & 3.9947 & 1.827e-03 & 1.9997 \\
\hline
\end{tabular}
}
\caption{Errors \eqref{eq:err_defs} and EOCs for the first test problem \eqref{eq:testA} described in Section \ref{sec:numsim} with $\mu(h) = 4000 h^2$.}
\label{tab:conv_hb}
\end{table}

\begin{table}
{\footnotesize
\begin{tabular}{|r||l|l||l|l||l|l||l|l|} \hline 
$N$ & $\mmm{E}_u(L^\infty,L^2)$ & $\eoc$ & $\mmm{E}_u(L^\infty,H^1)$ & $\eoc$ & $\mmm{E}_u(H^1,L^2)$ & $\eoc$ & $\mmm{E}_u(H^1,H^1)$ & $\eoc$\\ \hline \hline 
   61 & 5.165e-06 & --     & 5.399e-05 & --     & 7.989e-05 & --     & 8.725e-04 & --     \\
   81 & 2.369e-06 & 2.7091 & 2.467e-05 & 2.7229 & 3.906e-05 & 2.4876 & 4.262e-04 & 2.4905 \\
  101 & 1.257e-06 & 2.8391 & 1.307e-05 & 2.8478 & 2.152e-05 & 2.6705 & 2.358e-04 & 2.6524 \\
  131 & 5.856e-07 & 2.9120 & 6.078e-06 & 2.9174 & 1.037e-05 & 2.7845 & 1.146e-04 & 2.7491 \\
  161 & 3.172e-07 & 2.9525 & 3.290e-06 & 2.9557 & 5.728e-06 & 2.8573 & 6.402e-05 & 2.8056 \\
  201 & 1.634e-07 & 2.9729 & 1.694e-06 & 2.9746 & 2.998e-06 & 2.9009 & 3.402e-05 & 2.8334 \\
\hline
\end{tabular}

\medskip

\begin{tabular}{|r||l|l||l|l||l|l||l|l|} \hline 
$N$ & $\mmm{E}_w(L^\infty,L^2)$ & $\eoc$ & $\mmm{E}_w(L^2,H^1)$ & $\eoc$ & $\mmm{E}_c(L^\infty,L^2)$ & $\eoc$ & $\mmm{E}_c(L^2,H^1)$ & $\eoc$\\ \hline \hline 
   61 & 6.469e-04 & --     & 7.675e-02 & --     & 2.492e-06 & --     & 2.027e-02 & --     \\
   81 & 2.942e-04 & 2.7391 & 4.229e-02 & 2.0722 & 7.980e-07 & 3.9581 & 1.141e-02 & 1.9971 \\
  101 & 1.556e-04 & 2.8533 & 2.668e-02 & 2.0632 & 3.289e-07 & 3.9725 & 7.306e-03 & 1.9984 \\
  131 & 7.236e-05 & 2.9189 & 1.557e-02 & 2.0532 & 1.157e-07 & 3.9805 & 4.324e-03 & 1.9991 \\
  161 & 3.917e-05 & 2.9559 & 1.019e-02 & 2.0439 & 5.060e-08 & 3.9850 & 2.855e-03 & 1.9995 \\
  201 & 2.017e-05 & 2.9746 & 6.466e-03 & 2.0363 & 2.078e-08 & 3.9872 & 1.827e-03 & 1.9997 \\
\hline
\end{tabular}
}
\caption{Errors \eqref{eq:err_defs} and EOCs for the first test problem \eqref{eq:testA} described in Section \ref{sec:numsim} with $\mu(h) = 300 h^{3/2}$.}
\label{tab:conv_hc}
\end{table}

\begin{table}
{\footnotesize
\begin{tabular}{|r||l|l||l|l||l|l||l|l|} \hline 
$N$ & $\mmm{E}_u(L^\infty,L^2)$ & $\eoc$ & $\mmm{E}_u(L^\infty,H^1)$ & $\eoc$ & $\mmm{E}_u(H^1,L^2)$ & $\eoc$ & $\mmm{E}_u(H^1,H^1)$ & $\eoc$\\ \hline \hline 
   61 & 3.516e-06 & --     & 3.686e-05 & --     & 5.601e-05 & --     & 6.201e-04 & --     \\
   81 & 2.664e-06 & 0.9642 & 2.773e-05 & 0.9900 & 4.363e-05 & 0.8680 & 4.742e-04 & 0.9325 \\
  101 & 2.155e-06 & 0.9507 & 2.235e-05 & 0.9649 & 3.599e-05 & 0.8624 & 3.870e-04 & 0.9104 \\
  131 & 1.679e-06 & 0.9503 & 1.738e-05 & 0.9586 & 2.862e-05 & 0.8729 & 3.048e-04 & 0.9091 \\
  161 & 1.377e-06 & 0.9544 & 1.424e-05 & 0.9594 & 2.381e-05 & 0.8874 & 2.521e-04 & 0.9150 \\
  201 & 1.111e-06 & 0.9596 & 1.149e-05 & 0.9629 & 1.947e-05 & 0.9011 & 2.052e-04 & 0.9229 \\
\hline
\end{tabular}

\medskip

\begin{tabular}{|r||l|l||l|l||l|l||l|l|} \hline 
$N$ & $\mmm{E}_w(L^\infty,L^2)$ & $\eoc$ & $\mmm{E}_w(L^2,H^1)$ & $\eoc$ & $\mmm{E}_c(L^\infty,L^2)$ & $\eoc$ & $\mmm{E}_c(L^2,H^1)$ & $\eoc$\\ \hline \hline 
   61 & 4.375e-04 & --     & 7.418e-02 & --     & 2.489e-06 & --     & 2.026e-02 & --     \\
   81 & 3.312e-04 & 0.9674 & 4.271e-02 & 1.9185 & 7.984e-07 & 3.9525 & 1.141e-02 & 1.9970 \\
  101 & 2.677e-04 & 0.9539 & 2.796e-02 & 1.8988 & 3.299e-07 & 3.9604 & 7.305e-03 & 1.9984 \\
  131 & 2.084e-04 & 0.9537 & 1.709e-02 & 1.8752 & 1.167e-07 & 3.9596 & 4.323e-03 & 1.9991 \\
  161 & 1.708e-04 & 0.9577 & 1.164e-02 & 1.8478 & 5.138e-08 & 3.9522 & 2.854e-03 & 1.9995 \\
  201 & 1.378e-04 & 0.9627 & 7.764e-03 & 1.8181 & 2.133e-08 & 3.9389 & 1.827e-03 & 1.9997 \\
\hline
\end{tabular}
}
\caption{Errors \eqref{eq:err_defs} and EOCs for the first test problem \eqref{eq:testA} described in Section \ref{sec:numsim} with $\mu(h) = 4 h^{1/2}$.}
\label{tab:conv_hd}
\end{table}

In a first example, let $T=1$ and 
\[
 f(c) = \frac{1-2c}{10},
\]
and consider
\begin{equation} \label{eq:testA}
 \begin{split}
 u(x,t) &= \frac{5}{2} \cos(2 \pi t) (x-1)^3 x^5, \\
 c(x,t) &= \frac{1}{10} \sin(7 \pi x) \sin(4 \pi t).  
 \end{split}
\end{equation}
The source functions $s_u(x,t)$ and $s_c(x,t)$ are picked such that the above functions solve \eqref{eq1}--\eqref{icond}. Note that then $\ubc=0$. We remark that the function $u$ has also been considered in \cite{DD06}. 

For varying values of $N$ ($h = 1/N$, $\delta = h^2$) the errors and corresponding $\eoc$'s are displayed in Table \ref{tab:conv_ha} for the choice $\mu(h) = 40 h$, i.e., $r = 1$. 
For most errors we observe EOCs close to two (those for $u_{t}$ and $u_{tx}$ are a bit smaller but still increasing). This corresponds to linear convergence as predicted in Theorem \ref{mainthm} except for $u_{tx}$. In that case we only could show a rate of $2-r =1$ but observe a better convergence behaviour. Regarding the error of $c$ in the norm $L^\infty((0,T),L^2(I))$ we also observe faster (here quadratic) convergence. 

We have also carried out computations with $\mu(h) = 4000 h^2$ for comparison. Recall that this case $r=2$ is not covered by the theory but we didn't observe any issues with solving the discrete problems. The results are displayed in Table \ref{tab:conv_hb}. We notice faster (quadratic) convergence of all errors except for $\mmm{E}_w(L^2,H^1)$ and $\mmm{E}_c(L^2,H^1)$ where linear convergence is measured. We also see that $\mmm{E}_c(L^\infty,L^2)$ and $\mmm{E}_c(L^2,H^1)$ barely change. 

For further comparison, we chose $\mu(h) = 300 h^r$ with an intermediate growth rate of $r = \frac{3}{2}$ and with $r =\frac{1}{2}$, see Tables \ref{tab:conv_hc} and \ref{tab:conv_hd} for the results, respectively. The findings are consistent in the sense that the EOCs for all fields except for $\mmm{E}_w(L^2,H^1)$, $\mmm{E}_c(L^\infty,L^2)$, and $\mmm{E}_c(L^2,H^1)$ are close to three or one now, indicating convergence orders of $\frac{3}{2}$ or $\frac{1}{2}$, respectively. Again, the errors of $c$ are very close to those in the other two simulation test series. In the case $r = \frac{1}{2}$ we even observe an impact on the EOCs for $\mmm{E}_w(L^2,H^1)$, namely a dip away from two.

The super-convergence of $\mmm{E}_u(L^\infty,H^1)$ for $r > 1$ is a bit surprising. The fact that the errors of $c$ barely depends on the scaling of $\mu$ in $h$ indicates that the geometric error has a smaller influence than the approximation of the diffusion term and the data for $c$. In order to investigate these findings a bit further we consider a second example with a more oscillating geometry and less oscillations in the field on the curve. 


\begin{table}
{\footnotesize
\begin{tabular}{|r||l|l||l|l||l|l||l|l|} \hline 
$N$ & $\mmm{E}_u(L^\infty,L^2)$ & $\eoc$ & $\mmm{E}_u(L^\infty,H^1)$ & $\eoc$ & $\mmm{E}_u(H^1,L^2)$ & $\eoc$ & $\mmm{E}_u(H^1,H^1)$ & $\eoc$\\ \hline \hline 
   61 & 9.579e-07 & --     & 4.753e-05 & --     & 1.923e-05 & --     & 1.019e-03 & --     \\
   81 & 4.014e-07 & 3.0228 & 2.497e-05 & 2.2366 & 8.069e-06 & 3.0185 & 5.157e-04 & 2.3688 \\
  101 & 2.089e-07 & 2.9259 & 1.529e-05 & 2.1989 & 4.203e-06 & 2.9231 & 3.085e-04 & 2.3021 \\
  131 & 1.006e-07 & 2.7857 & 8.679e-06 & 2.1582 & 2.029e-06 & 2.7745 & 1.714e-04 & 2.2405 \\
  161 & 5.824e-08 & 2.6327 & 5.588e-06 & 2.1207 & 1.181e-06 & 2.6078 & 1.088e-04 & 2.1881 \\
  201 & 3.337e-08 & 2.4959 & 3.504e-06 & 2.0913 & 6.824e-07 & 2.4579 & 6.737e-05 & 2.1483 \\
  251 & 1.966e-08 & 2.3706 & 2.209e-06 & 2.0665 & 4.064e-07 & 2.3225 & 4.203e-05 & 2.1149 \\
  301 & 1.298e-08 & 2.2758 & 1.520e-06 & 2.0490 & 2.710e-07 & 2.2230 & 2.870e-05 & 2.0906 \\
\hline
\end{tabular}

\medskip

\begin{tabular}{|r||l|l||l|l||l|l||l|l|} \hline 
$N$ & $\mmm{E}_w(L^\infty,L^2)$ & $\eoc$ & $\mmm{E}_w(L^2,H^1)$ & $\eoc$ & $\mmm{E}_c(L^\infty,L^2)$ & $\eoc$ & $\mmm{E}_c(L^2,H^1)$ & $\eoc$\\ \hline \hline 
   61 & 6.560e-04 & --     & 2.516e+00 & --     & 1.648e-08 & --     & 1.362e-04 & --     \\
   81 & 2.936e-04 & 2.7936 & 1.417e+00 & 1.9946 & 5.259e-09 & 3.9698 & 7.650e-05 & 2.0067 \\
  101 & 1.691e-04 & 2.4735 & 9.080e-01 & 1.9969 & 2.176e-09 & 3.9538 & 4.890e-05 & 2.0050 \\
  131 & 9.330e-05 & 2.2666 & 5.375e-01 & 1.9981 & 7.756e-10 & 3.9328 & 2.891e-05 & 2.0038 \\
  161 & 5.971e-05 & 2.1489 & 3.549e-01 & 1.9988 & 3.446e-10 & 3.9065 & 1.907e-05 & 2.0029 \\
  201 & 3.746e-05 & 2.0891 & 2.272e-01 & 1.9992 & 1.451e-10 & 3.8756 & 1.220e-05 & 2.0023 \\
  251 & 2.369e-05 & 2.0531 & 1.454e-01 & 1.9995 & 6.166e-11 & 3.8359 & 7.805e-06 & 2.0018 \\
  301 & 1.635e-05 & 2.0335 & 1.009e-01 & 1.9996 & 3.088e-11 & 3.7920 & 5.418e-06 & 2.0015 \\
\hline
\end{tabular}
}
\caption{Errors \eqref{eq:err_defs} and EOCs for the second test problem \eqref{eq:testB} described in Section \ref{sec:numsim} with $\mu(h) = 40 h$.}
\label{tab:Bconv_ha}
\end{table}

\begin{table}
{\footnotesize
\begin{tabular}{|r||l|l||l|l||l|l||l|l|} \hline 
$N$ & $\mmm{E}_u(L^\infty,L^2)$ & $\eoc$ & $\mmm{E}_u(L^\infty,H^1)$ & $\eoc$ & $\mmm{E}_u(H^1,L^2)$ & $\eoc$ & $\mmm{E}_u(H^1,H^1)$ & $\eoc$\\ \hline \hline 
  61 & 1.118e-06 & --     & 4.969e-05 & --     & 1.912e-05 & --     & 1.024e-03 & --     \\
  81 & 4.591e-07 & 3.0951 & 2.553e-05 & 2.3150 & 8.466e-06 & 2.8317 & 5.183e-04 & 2.3686 \\
 101 & 2.089e-07 & 3.5271 & 1.529e-05 & 2.2974 & 4.203e-06 & 3.1380 & 3.085e-04 & 2.3251 \\
 131 & 7.790e-08 & 3.7608 & 8.543e-06 & 2.2185 & 1.783e-06 & 3.2668 & 1.705e-04 & 2.2594 \\
 161 & 3.482e-08 & 3.8779 & 5.477e-06 & 2.1409 & 8.950e-07 & 3.3211 & 1.080e-04 & 2.1986 \\
 201 & 1.448e-08 & 3.9317 & 3.436e-06 & 2.0892 & 4.234e-07 & 3.3541 & 6.680e-05 & 2.1546 \\
 251 & 5.983e-09 & 3.9616 & 2.172e-06 & 2.0545 & 1.987e-07 & 3.3892 & 4.161e-05 & 2.1215 \\
 301 & 2.897e-09 & 3.9768 & 1.499e-06 & 2.0347 & 1.065e-07 & 3.4197 & 2.837e-05 & 2.1007 \\
\hline
\end{tabular}

\medskip

\begin{tabular}{|r||l|l||l|l||l|l||l|l|} \hline 
$N$ & $\mmm{E}_w(L^\infty,L^2)$ & $\eoc$ & $\mmm{E}_w(L^2,H^1)$ & $\eoc$ & $\mmm{E}_c(L^\infty,L^2)$ & $\eoc$ & $\mmm{E}_c(L^2,H^1)$ & $\eoc$\\ \hline \hline 
  61 & 1.248e-03 &        & 2.581e+00 &        & 1.696e-08 &        & 1.362e-04 &        \\
  81 & 4.060e-04 & 3.9038 & 1.429e+00 & 2.0542 & 5.329e-09 & 4.0250 & 7.650e-05 & 2.0067 \\
 101 & 1.691e-04 & 3.9251 & 9.080e-01 & 2.0336 & 2.176e-09 & 4.0130 & 4.890e-05 & 2.0050 \\
 131 & 5.997e-05 & 3.9514 & 5.343e-01 & 2.0211 & 7.608e-10 & 4.0065 & 2.891e-05 & 2.0038 \\
 161 & 2.629e-05 & 3.9705 & 3.517e-01 & 2.0133 & 3.313e-10 & 4.0037 & 1.907e-05 & 2.0029 \\
 201 & 1.081e-05 & 3.9813 & 2.246e-01 & 2.0087 & 1.356e-10 & 4.0026 & 1.220e-05 & 2.0023 \\
 251 & 4.441e-06 & 3.9883 & 1.436e-01 & 2.0056 & 5.552e-11 & 4.0021 & 7.805e-06 & 2.0018 \\
 301 & 2.144e-06 & 3.9923 & 9.966e-02 & 2.0037 & 2.676e-11 & 4.0019 & 5.418e-06 & 2.0015 \\
\hline
\end{tabular}
}
\caption{Errors \eqref{eq:err_defs} and EOCs for the second test problem \eqref{eq:testA} described in Section \ref{sec:numsim} with $\mu(h) = 4000 h^2$.}
\label{tab:Bconv_hb}
\end{table}

Keeping $T=1$ and $f(c)$ as before consider 
\begin{equation} \label{eq:testB}
 \begin{split}
 u(x,t) &= \frac{5}{2} \cos(2 \pi t) (x-1)^3 x^5 \sin(4 \pi x), \\
 c(x,t) &= \frac{1}{10} \sin(2 \pi x) \sin(\pi t),
 \end{split}
\end{equation}
and choose the source terms again as appropriate to ensure that this is a solution to \eqref{eq1}--\eqref{icond}. 

The errors for $\mu(h) = 40 h$ are displayed in Table \ref{tab:Bconv_ha} whilst those for $\mu(h) = 4000 h^2$ are in Table \ref{tab:Bconv_hb}. We now indeed observe EOCs of around two for both $\mmm{E}_u(L^\infty,H^1)$ and $\mmm{E}_u(H^1,H^1)$ as expected. The behaviour of the other errors is as before.

\section{Conclusion and outlook}

We analysed the semi-discrete scheme \eqref{(2.10)}--\eqref{ch-gleichung} and quantified convergence to the solution of \eqref{(2.4)}--\eqref{c-gleichung}, see Theorem \ref{mainthm} on page \pageref{mainthm}. In order to be able to derive an error estimate for $c_{h}$ a better control of the velocity $u_{ht}$ was required. For this purpose we augmented the geometric equation \eqref{(2.4)} in the semi-discrete scheme with a penalty term, which is a weighted $H^1$ inner product of the velocity with the test function. The weight $\mu(h) \sim h^r$, $r \in [1,2)$ has an  impact on the convergence rates. In turn, the scheme proved quite stable for penalty terms beyond the regime that was analysed. In particular, when $r=2$ was chosen then maximal convergence rates were obtained as one may expect them for the choice of finite elements. This case is not covered by the analysis as then the argument with the generalised Gronwall lemma \ref{genG} fails. On the other hand, the restriction $r \geq 1$ is clearly motivated by the inequality \eqref{ieq:mu_critical}. It was observed in simulations that choosing $r<1$ indeed destroys the order of convergence proved in Theorem \ref{mainthm}.

We make a few remarks on the context of the problem and possible generalisations of the results:
\begin{itemize}
 \item 
 Well-posedness and regularity of the above problem is, to our knowledge, an open problem. We have decided not to address this issue here but to leave it for future studies and to focus on the numerical analysis of an approximation scheme. Assumption \ref{ass:sol} was made for this purpose.

 \item 
 The choice of the boundary conditions \eqref{bcond} has been made in order to keep the presentation as simple as possible. Prescribing non-zero Dirichlet boundary condition for $c$ does not change the analysis. For boundary data  $\ubc$ depending on time we also expect similar results. On the contrary, different conditions for $\kappa$ present difficulties as already noted and briefly discussed previously \cite[Remark ~2.3]{DD06}.

 \item 
 In \cite{DD06} a different choice for the initial values $\uich$ is made which improves the order of convergence: Let $\hat{u}_{0h}$ be given through \eqref{def-uhat} at time $t=0$, and $\hat{w}_{0h}$ through \eqref{(2.22)}. Define $\uich$ by $\uich - I_{h} \ubc \in X_{h0}$ and 
 \begin{align}\label{diffy}
  \int_{I} \frac{\uichx}{Q_{0h}} \varphi_{hx} dx = \int_{I} \frac{\hat{w}_{0h}}{\hat{Q}_{0h}} \varphi_{h} \qquad \forall \, \varphi_{h} \in X_{h0}. 
 \end{align}
 Here $Q_{0h}=\sqrt{1+ |\uichx|^{2}}$, $\hat{Q}_{0h}=\sqrt{1+ |\hat{u}_{0hx}|^{2}}$. Then for $e_{u}(0)= \hat{u}_{0h} - \uich$ and $e_{w}(0) = \hat{w}_{0h}- \wich$ we have the estimate 
 \begin{align*}
  \| e_{u}(0) \|_{H^{1}(I)} + \|e_{w}(0) \|_{L^{2}(I)} \leq C h^{2}|\log h|.
 \end{align*}
 The proof is sketched in \cite{DD06Corr}.  However, this choice of initial values is not effective in our analysis as that higher order is not achieved with regards to the other terms in our case of a coupled problem. 

 \item 
 Lemma~\ref{lemma3.4} corresponds to \cite[Lemma 3.4]{DD06} where the coefficient $1/4C_{0}$ has been corrected.
 
\end{itemize}

\bibliographystyle{siam}

\begin{thebibliography}{10}

\bibitem{BaePedNoc2004}
{\sc E.~B{\"a}nsch, P.~Morin, and R.~H. Nochetto}, {\em Surface diffusion of
  graphs: variational formulation, error analysis, and simulation}, SIAM
  Journal on Numerical Analysis, 42 (2004), pp.~773--799.

\bibitem{BarDecSty2017}
{\sc J.~W. Barrett, K.~Deckelnick, and V.~Styles}, {\em Numerical analysis for
  a system coupling curve evolution to reaction diffusion on the curve}, SIAM
  Journal on Numerical Analysis, 55 (2017), pp.~1080--1100.

\bibitem{BarGarNue2007}
{\sc J.~W. Barrett, H.~Garcke, and R.~N{\"u}rnberg}, {\em A parametric finite
  element method for fourth order geometric evolution equations}, Journal of
  Computational Physics, 222 (2007), pp.~441--467.

\bibitem{BarGarNue2012}
\leavevmode\vrule height 2pt depth -1.6pt width 23pt, {\em Parametric
  approximation of isotropic and anisotropic elastic flow for closed and open
  curves}, Numerische Mathematik, 120 (2012), pp.~489--542.

\bibitem{Bartels}
{\sc S.~Bartels}, {\em Numerical methods for nonlinear partial differential
  equations}, vol.~47 of Springer Series in Computational Mathematics,
  Springer, Cham, 2015.

\bibitem{BarDedQua2016}
{\sc A.~Bartezzaghi, L.~Ded{\`e}, and A.~Quarteroni}, {\em Isogeometric
  analysis of geometric partial differential equations}, Computer Methods in
  Applied Mechanics and Engineering, 311 (2016), pp.~625--647.

\bibitem{BenMikObeSev2009}
{\sc M.~Bene{\v{s}}, K.~Mikula, T.~Oberhuber, and
  D.~{\v{S}}ev{\v{c}}ovi{\v{c}}}, {\em Comparison study for level set and
  direct lagrangian methods for computing {W}illmore flow of closed planar
  curves}, Computing and Visualization in Science, 12 (2009), pp.~307--317.

\bibitem{BreMasOud2015}
{\sc E.~Bretin, S.~Masnou, and {\'E}.~Oudet}, {\em Phase-field approximations
  of the {W}illmore functional and flow}, Numerische Mathematik, 131 (2015),
  pp.~115--171.

\bibitem{ChaGanGra2001}
{\sc M.~A. Chaplain, M.~Ganesh, and I.~G. Graham}, {\em Spatio-temporal pattern
  formation on spherical surfaces: numerical simulation and application to
  solid tumour growth}, Journal of Mathematical Biology, 42 (2001),
  pp.~387--423.

\bibitem{DD06Corr}
{\sc K.~Deckelnick and G.~Dziuk}, {\em Correction to ``{E}rror analysis of a
  finite element method for the {W}illmore flow of graphs''},
  http://www-ian.math.uni-magdeburg.de/home/deckelnick/correction.pdf,  (2006).

\bibitem{DD06}
\leavevmode\vrule height 2pt depth -1.6pt width 23pt, {\em Error analysis of a
  finite element method for the {W}illmore flow of graphs}, Interfaces and Free
  Boundaries, 8 (2006), pp.~21--46.

\bibitem{DecDzi2009}
\leavevmode\vrule height 2pt depth -1.6pt width 23pt, {\em Error analysis for
  the elastic flow of parametrized curves}, Mathematics of Computation, 78
  (2009), pp.~645--671.

\bibitem{DDE}
{\sc K.~Deckelnick, G.~Dziuk, and C.~M. Elliott}, {\em Computation of geometric
  partial differential equations and mean curvature flow}, Acta Numer., 14
  (2005), pp.~139--232.

\bibitem{DecKatSch2015}
{\sc K.~Deckelnick, J.~Katz, and F.~Schieweck}, {\em A {C1}--finite element
  method for the {W}illmore flow of two-dimensional graphs}, Mathematics of
  Computation, 84 (2015), pp.~2617--2643.

\bibitem{DecSch2010}
{\sc K.~Deckelnick and F.~Schieweck}, {\em Error analysis for the approximation
  of axisymmetric {W}illmore flow by {C1}--finite elements}, Interfaces and
  Free Boundaries, 12 (2010), pp.~551--574.

\bibitem{DroRum2004}
{\sc M.~Droske and M.~Rumpf}, {\em A level set formulation for {W}illmore
  flow}, Interfaces and Free Boundaries, 6 (2004), pp.~361--378.

\bibitem{DuLiuWan2004}
{\sc Q.~Du, C.~Liu, and X.~Wang}, {\em A phase field approach in the numerical
  study of the elastic bending energy for vesicle membranes}, Journal of
  Computational Physics, 198 (2004), pp.~450--468.

\bibitem{DuWan2007}
{\sc Q.~Du and X.~Wang}, {\em Convergence of numerical approximations to a
  phase field bending elasticity model of membrane deformations}, International
  Journal of Numerical Analysis and Modeling, 4 (2007), pp.~441--459.

\bibitem{Dzi2008}
{\sc G.~Dziuk}, {\em Computational parametric {W}illmore flow}, Numerische
  Mathematik, 111 (2008), pp.~55--80.

\bibitem{DE07}
{\sc G.~Dziuk and C.~M. Elliott}, {\em Finite elements on evolving surfaces},
  IMA Journal of Numerical Analysis, 27 (2007), pp.~262--292.

\bibitem{DziEll2010}
\leavevmode\vrule height 2pt depth -1.6pt width 23pt, {\em An {E}ulerian
  approach to transport and diffusion on evolving implicit surfaces}, Computing
  and Visualization in Science, 13 (2010), pp.~17--28.

\bibitem{DziEll2013}
{\sc G.~Dziuk and C.~M. Elliott}, {\em Finite element methods for surface
  {PDE}s}, Acta Numer., 22 (2013), pp.~289--396.

\bibitem{DziKuwSch2002}
{\sc G.~Dziuk, E.~Kuwert, and R.~Sch{\"a}tzle}, {\em Evolution of elastic
  curves in $\mathbb{R}^{n}$: Existence and computation}, SIAM Journal on
  Mathematical Analysis, 33 (2002), pp.~1228--1245.

\bibitem{ES10b}
{\sc C.~M. Elliott and B.~Stinner}, {\em Modeling and computation of two phase
  geometric biomembranes using surface finite elements}, Journal of
  Computational Physics, 229 (2010), pp.~6585--6612.

\bibitem{ES13}
\leavevmode\vrule height 2pt depth -1.6pt width 23pt, {\em Computation of
  two-phase biomembranes with phase dependentmaterial parameters using surface
  finite elements}, Communications in Computational Physics, 13 (2013),
  pp.~325--360.

\bibitem{EllStiStyWel2011}
{\sc C.~M. Elliott, B.~Stinner, V.~Styles, and R.~Welford}, {\em Numerical
  computation of advection and diffusion on evolving diffuse interfaces}, IMA
  Journal of Numerical Analysis, 31 (2011), pp.~786--812.

\bibitem{ESV12}
{\sc C.~M. Elliott, B.~Stinner, and C.~Venkataraman}, {\em Modelling cell
  motility and chemotaxis with evolving surface finite elements}, Journal of
  The Royal Society Interface, 9 (2012), pp.~3027--3044.

\bibitem{FraRumWir2013}
{\sc M.~Franken, M.~Rumpf, and B.~Wirth}, {\em A phase field based {PDE}
  constraint optimization approach to time discrete {W}illmore flow},
  International Journal of Numerical Analysis and Modeling, 10 (2013),
  pp.~116--138.

\bibitem{HanLarZah2016}
{\sc P.~Hansbo, M.~G. Larson, and S.~Zahedi}, {\em A cut finite element method
  for coupled bulk-surface problems on time-dependent domains}, Computer
  Methods in Applied Mechanics and Engineering, 307 (2016), pp.~96--116.

\bibitem{KovLiLubGue2017}
{\sc B.~Kov{\'a}cs, B.~Li, C.~Lubich, and C.~A. Power~Guerra}, {\em Convergence
  of finite elements on an evolving surface driven by diffusion on the
  surface}, Numerische Mathematik (online),  (2017), pp.~doi
  10.1007/s00211--017--0888--4.

\bibitem{LanMooNeu2016}
{\sc U.~Langer, S.~E. Moore, and M.~Neum{\"u}ller}, {\em Space--time
  isogeometric analysis of parabolic evolution problems}, Computer Methods in
  Applied Mechanics and Engineering, 306 (2016), pp.~342--363.

\bibitem{MerMarRicHar2013}
{\sc M.~Mercker, A.~Marciniak-Czochra, T.~Richter, and D.~Hartmann}, {\em
  Modeling and computing of deformation dynamics of inhomogeneous biological
  surfaces}, SIAM Journal on Applied Mathematics, 73 (2013), pp.~1768--1792.

\bibitem{MikSevBal2010}
{\sc K.~Mikula, D.~{\v{S}}ev{\v{c}}ovi{\v{c}}, and M.~Bala{\v{z}}ovjech}, {\em
  A simple, fast and stabilized flowing finite volume method for solving
  general curve evolution equations}, Communications in Computational Physics,
  7 (2010), pp.~195--211.

\bibitem{NeiMacWebIns2011}
{\sc M.~P. Neilson, J.~A. Mackenzie, S.~D. Webb, and R.~H. Insall}, {\em
  Modeling cell movement and chemotaxis using pseudopod-based feedback}, SIAM
  Journal on Scientific Computing, 33 (2011), pp.~1035--1057.

\bibitem{OlsReu2014}
{\sc M.~A. Olshanskii and A.~Reusken}, {\em Error analysis of a space-time
  finite element method for solving {PDEs} on evolving surfaces}, SIAM Journal
  on Numerical Analysis, 52 (2014), pp.~2092--2120.

\bibitem{OshSet1988}
{\sc S.~Osher and J.~A. Sethian}, {\em Fronts propagating with
  curvature-dependent speed: algorithms based on {H}amilton-{J}acobi
  formulations}, Journal of Computational Physics, 79 (1988), pp.~12--49.

\bibitem{PetRuu2016}
{\sc A.~Petras and S.~J. Ruuth}, {\em {PDEs} on moving surfaces via the closest
  point method and a modified grid based particle method}, Journal of
  Computational Physics, 312 (2016), pp.~139--156.

\bibitem{Poz2015}
{\sc P.~Pozzi}, {\em Computational anisotropic {W}illmore flow}, Interfaces and
  Free Boundaries, 17 (2015), pp.~189--232.

\bibitem{PS17}
{\sc P.~Pozzi and B.~Stinner}, {\em Curve shortening flow coupled to lateral
  diffusion}, Numerische Mathematik, 135 (2017), pp.~1171--1205.

\bibitem{RaeVoi2007}
{\sc A.~R{\"a}tz and A.~Voigt}, {\em A diffuse-interface approximation for
  surface diffusion including adatoms}, Nonlinearity, 20 (2007), p.~177.

\bibitem{XuZha2003}
{\sc J.-J. Xu and H.-K. Zhao}, {\em An {E}ulerian formulation for solving
  partial differential equations along a moving interface}, Journal of
  Scientific Computing, 19 (2003), pp.~573--594.

\end{thebibliography}

\end{document}